\newtheorem{theorem}{Theorem}[section]
\newtheorem{lemma}[theorem]{Lemma}
\newtheorem{corollary}[theorem]{Corollary}
\newcommand{\ignore}[1]{}
\newcommand{\matwo}[4]{\begin{pmatrix} #1 & #2 \\ #3 & #4 \end{pmatrix}}
\newcommand{\vecthree}[3]{\begin{pmatrix} #1 \\ #2 \\ #3 \end{pmatrix}}
\newcommand{\R}{\mathbb{R}}
\newcommand{\N}{\mathbb{N}}
\newcommand{\FF}{\mathbb{F}}
\newcommand{\RR}{\mathbb R}
\newcommand{\CC}{\mathbb C}
\newcommand{\ZZ}{\mathbb Z}
\newcommand{\SSS}{\ensuremath{\mathbb S}}
\newcommand{\pts}{\mathcal P}
\newcommand{\lines}{\mathcal L}
\newcommand{\flats}{\mathcal F}
\newcommand{\BI}{\boldsymbol 1}
\newcommand{\cyl}{\mathcal C}
\newcommand{\Spin}[1]{\text{\textnormal{Spin}}(#1)}
\newcommand{\Pin}[1]{\mbox{Pin}(#1)}
\newcommand{\Spun}[1]{\text{\textnormal{Spun}}(#1)}
\newcommand{\SO}[1]{\mbox{SO}(#1)}
\newcommand{\SE}[1]{\mbox{SE}(#1)}
\newcommand{\Ort}[1]{\mbox{O}(#1)}
\newcommand{\flatss}{{\mathcal F}_\pts}
\newcommand{\flatsu}{{\mathcal F}_U}
\newcommand{\vb}{{\bf V}}
\newcommand{\ib}{{\bf I}}
\newcommand{\jb}{{\bf J}}
\newcommand{\rank}{\mathrm{rank\,}}
\def\eps{{\varepsilon}}
\newcommand{\parag}[1]{\vspace{2mm}

\noindent{\bf #1} }
\begin{document}

\title{A Reduction for the Distinct Distances Problem in  $\RR^d$}
\author{Sam Bardwell--Evans\thanks{California Institute of Technology, Pasadena, CA, USA. Supported by Caltech's Summer Undergraduate Research Fellowships (SURF) program.
{\sl sam.bardwell.evans@gmail.com}. }
\and
Adam Sheffer\thanks{Department of Mathematics, Baruch College, City University of New York, NY, USA.
{\sl adamsh@gmail.com}. Supported by NSF grant DMS-1710305. Corresponding author. 55 Lexington Ave, New York, NY 10010, room 6-291.
}}
\maketitle

\begin{abstract}
We introduce a reduction from the distinct distances problem in $\RR^d$ to an incidence problem with $(d-1)$-flats in $\RR^{2d-1}$.
Deriving the conjectured bound for this incidence problem (the bound predicted by the polynomial partitioning technique) would lead to a tight bound for the distinct distances problem in $\RR^d$.
The reduction provides a large amount of information about the $(d-1)$-flats, and a framework for deriving more restrictions that these satisfy.

Our reduction is based on introducing a Lie group that is a double cover of the special Euclidean group.
This group can be seen as a variant of the Spin group, and a large part of our analysis involves studying its properties.
\end{abstract}

\noindent {\bf Keywords.} Distinct distances, Combinatorial Geometry, Incidences, Lie groups, Spin group.

\section{Introduction}

The \emph{Erd\H os distinct distances problem} is a main problem in Discrete Geometry, which asks for the minimum number of distinct distances spanned by a set of $n$ points in $\RR^2$.
That is, denoting the distance between two points $p,q\in \RR^2$ as $|pq|$, we wish to find $\min_{|\pts|=n} |\{|pq| :\, p,q\in \pts \}|$.

In 1946, Erd\H os \cite{erd46} observed that a $\sqrt{n} \times \sqrt{n}$ section of the integer lattice $\ZZ^2$ spans $\Theta(n/\sqrt{\log n})$ distinct distances (this observation is an immediate corollary of a number theoretic result of Landau and Ramanujan).
Erd\H os conjectured that no set of $n$ points in $\RR^2$ spans an asymptotically smaller number of distinct distances.
Proving that every set of $n$ points in $\RR^2$ spans $\Omega(n/\sqrt{\log n})$ distinct distances turned out to be a difficult problem, to have a deep underlying theory, and to have strong connections to several other parts of mathematics.

After over 60 years and many works on the distinct distances problem, Guth and Katz \cite{GK15} proved that every set of $n$ points in $\RR^2$ spans $\Omega(n/\log n)$ distinct distances.
Their proof involves studying properties of polynomials, partly by using tools from Algebraic Geometry.
This work began a new era of polynomial methods in Discrete Geometry.

Already in his 1946 paper, Erd\H os observed that a $n^{1/d}\times n^{1/d} \times \cdots \times n^{1/d}$ section of the integer lattice $\ZZ^d$ spans $\Theta(n^{2/d})$ distinct distances.
He then conjectured that this construction is asymptotically best possible, in the sense that every set  of $n$ points in $\RR^d$ spans $\Omega(n^{2/d})$ distinct distances.
When the Guth--Katz paper first appeared, it seemed that similar techniques might solve the distinct distance problem in $\RR^d$.
However, over six years have passed and no new results were obtained for this problem.
Before the new era of polynomial methods, Solymosi and Vu \cite{SV08} derived a lower bound for the number of distinct distances in $\RR^d$.
This bound was obtained by an induction on the dimension $d$.
The current best bounds for distinct distances in $\RR^d$ are obtained by using this induction, with the planar distinct distances theorem as the induction basis.
For example, this implies that every $n$ points in $\RR^3$ determine $\Omega^*(n^{3/5})$ distinct distances.\footnote{In the $\Omega^*(\cdot)$-notation we ignore polylogarithmic factors.}

The proof of the planar distinct distances theorem reduces the problem into a point-line incidence problem in $\RR^3$ (based on a previous work by Elekes and Sharir \cite{ES11}), and then solves the incidence problem by using polynomial methods.
Specifically, given a finite set of lines $\lines$ in $\RR^d$ and a positive integer $k$, we say that a point in $\RR^d$ is $k$-\emph{rich} if it is contained in at least $k$ lines of $\lines$.
The planar distinct distances theorem was reduced to the following problem.
\begin{theorem}[Guth and Katz \cite{GK15}] \label{eq:GKRichPnts}
Let $\lines$ be a set of $n$ lines in $\RR^3$ such that no point of $\RR^3$ is contained in more than $\sqrt{n}$ lines of $\lines$.
Moreover, every plane,  hyperbolic paraboloid, or single-sheeted hyperboloid contains $O(\sqrt{n})$ lines of $\lines$.
Then for every $k\ge 2$, the number of $k$-rich points is $O\left(\frac{n^{3/2}}{k^2}+\frac{n}{k}\right)$.
\end{theorem}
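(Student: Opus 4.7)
The plan is to use the polynomial partitioning technique. For a degree parameter $D$ to be chosen later (ultimately $D \asymp \sqrt{n}$), I would apply polynomial partitioning for lines to obtain a nonzero $f\in \RR[x,y,z]$ of degree at most $D$ such that each of the $O(D^3)$ open connected components of $\RR^3\setminus Z(f)$ is crossed by $O(n/D^2)$ lines of $\lines$. I would then split the set of $k$-rich points as $\pts_k = \pts_k^{\mathrm{cell}}\cup \pts_k^{\mathrm{surf}}$, where $\pts_k^{\mathrm{cell}}$ consists of those in an open cell and $\pts_k^{\mathrm{surf}}$ of those on $Z(f)$, bounding the two contributions separately.

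For a single cell $\Omega$, only the $m=O(n/D^2)$ lines that enter $\Omega$ can pass through any of its points. Since each such line meets the others in at most $m-1$ interior points and therefore supports at most $O(m/k)$ $k$-rich points of $\Omega$, a double counting of incidences between these lines and the $k$-rich points of $\Omega$ yields $O(m^2/k^2)$ $k$-rich points per cell. Summing over $O(D^3)$ cells gives $O(n^2/(Dk^2))$; with $D\asymp\sqrt{n}$ this is $O(n^{3/2}/k^2)$, as desired.

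The main obstacle is bounding $|\pts_k^{\mathrm{surf}}|$, which requires genuine algebraic geometry. Split $\lines$ into $\lines_0=\{\ell\in\lines:\ell\subseteq Z(f)\}$ and $\lines_1=\lines\setminus\lines_0$. A line in $\lines_1$ meets $Z(f)$ in at most $D$ points by B\'ezout, and the resulting rich points can be handled either by a two-level refinement of the partitioning or by a direct incidence bound on the surface. The delicate case is $\lines_0$: here one invokes the \emph{flecnode polynomial} of Salmon and Cayley, a polynomial of degree at most $11D$ vanishing on every line contained in $Z(f)$, to show that an irreducible component of $Z(f)$ of degree $d$ is either ruled or carries only $O(d^2)$ lines. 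Classical classification then forces an irreducible doubly-ruled surface in $\RR^3$ to be a plane, hyperbolic paraboloid, or single-sheeted hyperboloid; the hypothesis of the theorem is precisely what caps $|\lines_0|$ on each such component at $O(\sqrt{n})$. Combining these inputs with the point-multiplicity hypothesis of $\sqrt{n}$ allows $|\pts_k^{\mathrm{surf}}|$ to be absorbed into $O(n^{3/2}/k^2+n/k)$.

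The hard part, and the bulk of the Guth--Katz proof, is this second step: the flecnode argument controlling lines on an irreducible surface together with the ruled-surface classification that translates the geometric hypotheses on planes, hyperbolic paraboloids, and single-sheeted hyperboloids into an algebraic restriction one can actually exploit.
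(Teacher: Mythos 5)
This theorem is not proved in the paper; it is quoted from Guth and Katz \cite{GK15} as motivation for the reduction developed here, so there is no in-paper argument to check yours against. Your sketch is a reasonable high-level account of the Guth--Katz proof: apply polynomial partitioning with degree $D\asymp\sqrt{n}$, bound the $k$-rich points interior to the open cells by counting pairs of lines per cell (giving $O(D^3\cdot(n/D^2)^2/k^2)=O(n^2/(Dk^2))=O(n^{3/2}/k^2)$ with this choice of $D$), and control the points on $Z(f)$ by separating lines contained in $Z(f)$ from lines transverse to it, invoking the flecnode polynomial of Salmon--Cayley to show a non-ruled irreducible component of degree $d$ carries only $O(d^2)$ lines, and the classification of doubly-ruled surfaces in $\RR^3$ to see that the hypotheses on planes, hyperbolic paraboloids, and single-sheeted hyperboloids are precisely what cap the lines of $\lines_0$ on each ruled component. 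You are right that the surface step is where the bulk of the Guth--Katz paper lies; your description of it is necessarily compressed but names the correct tools. One caveat: the genuine argument is more delicate than a single cell-versus-surface pass (Guth and Katz treat $k=2$ and $k\ge 3$ by somewhat different arguments, and the bookkeeping for transverse lines and for points on cell boundaries takes real care), but your outline has the right shape and correctly identifies where the hypotheses enter.
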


It is possible to imitate the reduction of the planar distinct distances problem in higher dimensions.
However, already for distinct distances in $\RR^3$ this leads to an incidence problem with somewhat involved varieties that are difficult to study.
For example, it is not clear how to bound the number of varieties that can be contained in a hyperplane.

The main contribution of this paper is a more involved reduction that leads to a simpler incidence problem.
It is significantly easier to establish properties of the varieties in this problem.
We refer to $k$-dimensional planes in $\RR^d$ as $k$-\emph{flats}.
Let $\SSS^d$ be the hypersphere in $\RR^{d+1}$ that is centered at the origin and of radius 1.

\begin{theorem} \label{th:DDreduction}
The problem of deriving a lower bound on the minimum number of distinct distances spanned by $n$ points in $\RR^d$ can be reduced to the following problem:
\vspace{-2mm}

\begin{quotation}
Let $\flats$ be a set of $n$ distinct $(d-1)$-flats in $\RR^{2d-1}$, such that every two flats intersect in at most one point,
every point of $\RR^{2d-1}$ is contained in $O(\sqrt{n})$ flats of $\flats$, and every hyperplane in $\RR^{2d-1}$ contains $O(\sqrt{n})$ of these flats.
Find an upper bound on the number of $k$-rich points, for every $2\le k = O(n^{1/d+\eps})$ (for some $\eps>0$).
\end{quotation}
\vspace{-2mm}

Deriving the bound $O\left(\frac{n^{(2d-1)/d}}{k^{2+\eps}}\right)$ for the number of $k$-rich points would yield the conjectured lower bound of $\Omega(n^{2/d})$ distinct distances.
\end{theorem}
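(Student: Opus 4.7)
\vspace{2mm}
\noindent\textbf{Proof proposal.} My plan is to generalize the Elekes--Sharir reduction \cite{ES11} that underlies the planar Guth--Katz theorem. In the planar case, each ordered pair $(p,q)\in\pts\times\pts$ of distinct points yields a line $\ell_{p,q}\subset\RR^3$ parametrizing isometries $g\in\SE{2}$ sending $p$ to $q$; two such lines meet in exactly one point precisely when $|pq|=|p'q'|$, the intersection encoding the unique orientation-preserving isometry sending $(p,q)$ to $(p',q')$. In $\RR^d$ with $d\ge 3$ two complications arise: the set $\{g\in\SE{d}:g(p)=q\}$ is a $\binom{d}{2}$-dimensional variety rather than a curve, and the isometries carrying one ordered pair to another form a $\binom{d-1}{2}$-dimensional coset of $\SO{d-1}$, the stabilizer of an ordered pair of distinct points in $\SE{d}$. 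Crucially, $\binom{d}{2}-\binom{d-1}{2}=d-1$, so the natural $(d-1)$-dimensional object (the sphere of possible images of a second point $q$) has the correct dimension for the target flats. The Spin-like double cover $G$ of $\SE{d}$ promised in the abstract is, I expect, exactly the device that linearizes this sphere into an affine flat inside $\RR^{2d-1}$.

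Concretely, I would (i) construct $G$ and its $2$-to-$1$ projection $G\to\SE{d}$ from Clifford-algebra data, so that $G$ admits a ``translation plus spinor'' chart $\Phi\colon G\to\RR^{2d-1}$; (ii) for each pair $(p,q)\in\pts\times\pts$ of distinct points, set $\ell_{p,q}:=\Phi\bigl(\{g\in G:g(p)=q\}\bigr)$ and verify that this is an affine $(d-1)$-flat---this is where the passage to $G$ pays off, since under $\Phi$ the spherical family of lifts should unfold into a linear one; (iii) show that $\ell_{p_1,q_1}\cap\ell_{p_2,q_2}$ consists of at most one point, by checking that any common point is an isometry carrying $(p_1,p_2)\mapsto(q_1,q_2)$ (which requires $|p_1p_2|=|q_1q_2|$) and that the spinor lift removes the $\SO{d-1}$ ambiguity in such an isometry; (iv) verify, under a mild generic-position hypothesis on $\pts$, the $O(\sqrt{n})$ bound on flats through a point (i.e., that each element of $G$ maps at most $O(\sqrt{|\pts|})$ points of $\pts$ back into $\pts$) and the $O(\sqrt{n})$ bound on flats contained in a hyperplane, the latter by relating hyperplanes in $\RR^{2d-1}$ to algebraic constraints on isometries through the chart $\Phi$.

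Once the reduction is in place, the conversion to a distinct-distances bound is a standard Cauchy--Schwarz computation. Setting $n=|\flats|\asymp|\pts|^2$, the number of quadruples $(p_1,p_2,q_1,q_2)\in\pts^4$ with $|p_1p_2|=|q_1q_2|$ is at least $|\pts|^4/X$, where $X$ is the number of distinct distances, and each such quadruple corresponds to a coincidence of $\ell_{p_1,q_1}$ with $\ell_{p_2,q_2}$; writing $r_k$ for the number of $k$-rich points, $\sum_k\binom{k}{2}r_k\ge|\pts|^4/X$. Plugging in the hypothesized bound $r_{\ge k}=O(n^{(2d-1)/d}/k^{2+\eps})$ and bounding the sum via $\sum_k k\cdot r_{\ge k}$ up to the cut-off $k=O(n^{1/d+\eps})$ yields $|\pts|^4/X=O(n^{(2d-1)/d})$, i.e., $X=\Omega(|\pts|^{2/d})$, which is the conjectured bound. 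The principal difficulty lies in steps (i) and (ii): constructing the double cover $G$ and a chart $\Phi$ that simultaneously linearize the constraint $g(p)=q$ and the spherical fiber of $g\mapsto g(q)$, turning a sphere into a flat. I expect step (iv), and in particular the hyperplane count, to be a secondary but still substantial obstacle, requiring a detailed understanding of the pullback of linear subspaces of $\RR^{2d-1}$ to algebraic subvarieties of $G$.
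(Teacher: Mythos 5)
Your high-level framework is the right one, and most of the pieces match the paper: you correctly identify the Elekes--Sharir-type reduction, the need for a Clifford-algebraic double cover of $\SE{d}$, the stabilizer being a copy of $\SO{d-1}$ giving the $\binom{d-1}{2}$-dimensional ambiguity, and the Cauchy--Schwarz conversion. But step (ii) as you state it contains a dimensional error that conceals the essential extra move in the paper.

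A chart $\Phi\colon G\to\RR^{2d-1}$ cannot exist, since $\dim G = \dim\SE{d} = \binom{d+1}{2}$, which exceeds $2d-1$ once $d\ge 3$. Correspondingly, the set $\{g\in G:g(p)=q\}$ has dimension $\binom{d}{2}$, so its image under any injective map is $\binom{d}{2}$-dimensional, not $(d-1)$-dimensional; the Spin lift does not, by itself, collapse the $\SO{d-1}$ ambiguity. What the paper actually does is build a bijection $\eta_d\colon\Spun{d}_+\to\RR^{\binom{d+1}{2}}$ (a gnomonic projection followed by discarding all but the 2-term coordinates of the ambient Clifford algebra) under which each $T_{ap}=\{x:a^x=p\}$ becomes an affine $\binom{d}{2}$-flat $L_{ap}$, and under which the nontrivial intersections $L_{ap}\cap L_{bq}$ are affine $\binom{d-1}{2}$-flats. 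The passage to $(d-1)$-flats in $\RR^{2d-1}$ then comes from intersecting with a \emph{generic} $(2d-1)$-flat $H_g\subset\RR^{\binom{d+1}{2}}$: one checks $\binom{d}{2}+(2d-1)-\binom{d+1}{2}=d-1$ (so the flats of $\lines$ become $(d-1)$-flats) and $\binom{d-1}{2}+(2d-1)-\binom{d+1}{2}=0$ (so the pairwise intersections become single points, which is where the ``intersect in at most one point'' hypothesis really comes from). Your identity $\binom{d}{2}-\binom{d-1}{2}=d-1$ is relevant but for a different reason than you state; the $(d-1)$-flats are not a linearization of the sphere of images of a second point, they are generic slices of higher-dimensional flats. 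Proving that $\eta_d$ is actually a bijection, and that it sends the nonlinear sets $T_{ap}$ to genuine affine flats, requires substantial algebraic work on $\Spun{d}$ (the bulk of Sections 3--4) and is not an automatic consequence of having a double cover; you should be aware this linearization is the technical heart of the argument, not a one-line observation.

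One further wrinkle: even after setting up the $L_{ap}$, not every quadruple in $Q$ gives an intersection in the affine chart---some intersections $T_{ap}\cap T_{bq}$ sit entirely at infinity (in $H_0$). The paper shows this happens only when $a-b=q-p$, and that swapping entries of the quadruple recovers at least half of $Q$, which is where the factor-of-two slack in the Cauchy--Schwarz step gets absorbed. Your sketch of step (iii) glosses over this.
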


{\bf Remarks.}
(i) Using our methods, we obtained the same reduction for the case where the points are on the hypersphere $\SSS^d$ rather than in $\RR^d$.
Since the paper is already rather long and technical, we decided not to include the proof of this case. \\
(ii) For $\alpha\ge 0$, deriving the bound $O\left(\frac{n^{\alpha+(2d-1)/d}}{k^{2+\eps}}\right)$ for the number of $k$-rich points would yield a lower bound of $\Omega(n^{2/d-2\alpha})$ distinct distances. \\
(iii) The $\eps$ in the bound $k = O(n^{1/d+\eps})$ comes from an incidence bound of Solymosi and Tao \cite{ST12}.
It is conjectured that this $\eps$ can be removed from the bound of \cite{ST12}, and this would immediately remove the $\eps$ from the restriction on $k$.\\
(iv) Usually a bound on the number of $k$-rich points also includes an extra term of the form $n/k$, which dominates the bound when $k$ is large.
Since we are only interested in small values of $k$, this extra term is not relevant in our case.
\vspace{1.5mm}

The incidence problem stated in Theorem \ref{th:DDreduction} is false without including additional restrictions.
That is, one can obtain point-flat constructions in $\RR^{2d-1}$ that have too many incidences.
Using our framework, one can overcome this issue by deriving many additional restrictions for the incidence problem.
We do not mention such restrictions in Theorem \ref{th:DDreduction}, since it is not clear what are the natural restrictions for deriving the incidence bounds.
Instead, in Section \ref{sec:FlatsR5} we demonstrate how our framework can be used to derive additional restrictions, considering specifically the case of distinct distances in $\RR^3$.

The problem of distinct distances in $\RR^3$ leads to an incidence problem with 2-flats in $\RR^5$.
In Section \ref{sec:FlatsR5}, we bound the number of such 2-flats that can be contained in a constant-degree variety in $\RR^5$. We also bound the number of 2-flats that can have a one-dimensional intersection with a constant-degree two-dimensional variety in $\RR^5$.
Some of these results are conditional on having a good distinct distances bound for points on constant-degree surfaces in $\RR^3$.
Thus, to obtain the conjectured distinct distances bound in $\RR^3$, it is possible that one would first need to derive a distinct distances bound for the special case of points on a surface in $\RR^3$.
Currently, such bounds are known for planes, spheres, and two-sheeted hyperboloids (for example, see \cite{Tblog11}).
However, we are far from having this bound for arbitrary constant-degree surfaces in $\RR^3$.
For more details, see Sharir and Solomon \cite{SharirSolo16}.

The current best bounds for incidences with varieties in $\RR^d$ are obtained by the \emph{polynomial partitioning} technique (for example, see \cite{FPSSZ17,ST12}).
We can efficiently apply this technique to incidences with $(d-1)$-flats in $\RR^{2d-2}$, but the case of $(d-1)$-flats in $\RR^{2d-1}$ seems to be just beyond the current capabilities.
There is a simple way to estimate the bounds that the polynomial  partitioning technique is expected to yield after overcoming the current issues.\footnote{This is done by bounding the number of incidences in the cells while ignoring the incidences on the partition itself. See for example \cite[Chapter 8]{ShefferBook}. }
In the case of $(d-1)$-flats in $\RR^{2d-1}$, the expected incidence bound is $m_{k} = O\left(\frac{n^{(2d-1)/d}}{k^{(3d-2)/d}}+ \frac{n}{k}\right)$.
Note that this is the incidence bound required in Theorem \ref{th:DDreduction} to obtain a tight bound for the distinct distances problem in $\RR^d$.

Theorem \ref{th:DDreduction} states three restrictions on the set of flats $\flats$: the maximum number of flats incident to a common point, the maximum number of flats contained in a common hyperplane, and the size of the intersection of any two flats.
Our framework can be used to obtain additional information about the flats of $\flats$.
In particular, before obtaining the set $\flats$ of $(d-1)$-flats in $\RR^{2d-1}$, we get a set $\lines$ of $\binom{d}{2}$-flats in $\RR^{\binom{d+1}{2}}$.
To move to the space $\RR^{2d-1}$ from the statement of Theorem \ref{th:DDreduction}, we intersect $\lines$ with a generic $(2d-1)$-flat.
In Section \ref{sec:structure} we describe the exact structure of the flats of $\lines$ (that is, the equations that define each flat).
This structure can be used to obtain additional properties of the flats of $\lines$, and thus of the flats of $\flats$.
It is currently unclear what additional properties would be needed to solve the resulting incidence problem, but given such properties it seems reasonable that our techniques would lead to the derivation of the corresponding restrictions.

The inspiration for this work came from a blog post of Tao \cite{Tblog11}.
Tao states that he wrote this post ``to record some observations arising from discussions with Jordan Ellenberg, Jozsef Solymosi, and Josh Zahl.''
The post describes a reduction from the problem of finding a lower bound for the number of distinct distances spanned by points on the sphere $\SSS^2$ to Theorem \ref{eq:GKRichPnts}.
It also shows how the case of distinct distances in $\RR^2$ can be viewed as a scaling limit of the case of distinct distances in $\SSS^2$.
This is an alternative way to reduce the planar distinct distances problem to a point-line incidence problem in $\RR^3$.
While the original reduction can be seen as based on the Lie group $\SE{2}$, the reduction in the blog post is based on the Lie group $\Spin{3}$ (a brief introduction to these groups can be found in Section \ref{sec:Perlim}).
A more direct approach to distinct distances on $\SSS^2$ was presented by Rudnev and Selig \cite{RS16}.

To derive a reduction from the distinct distances problem in $\RR^d$ we introduce a variant of the group $\Spin{d}$, which we denote $\Spun{d}$.
While $\Spin{d}$ is a double cover of $\SO{d}$, the group $\Spun{d}$ is a double cover of $\SE{d}$.
A large part of our analysis deals with studying properties of $\Spun{d}$.

In Section \ref{sec:Perlim} we briefly describe several Lie groups that we rely on.
In Section \ref{sec:SpunD} we introduce the group $\Spun{d}$ and study its structure.
In Section \ref{sec:DDR3} we derive Theorem \ref{th:DDreduction} for the special case of distinct distances in $\RR^3$.
We present this case separately since it is simpler to prove and provides more intuition about what is happening in the proof.
In Section \ref{sec:DistancesRd} we extend the analysis from Section \ref{sec:DDR3} to any dimension.
Finally, in Section \ref{sec:structure} we derive the defining equations of the flats of $\lines$, as stated above.

\parag{Acknowledgments.}
We would like to thank Joshua Zahl for many discussions that eventually led to Section 7.
We wish to thank William Ballinger and Dmitri Gekhtman for several helpful discussions.
We would also like to thank the anonymous referees for helping to improve a previous draft of this work.

\section{Preliminaries: Lie groups} \label{sec:Perlim}

In our analysis we rely on a specific family of Lie groups.
In this section we briefly introduce these groups and some of their properties.
In Section \ref{sec:SpunD} we will introduce our own Lie group and study it in more detail.

Given a point $p\in \RR^d$, we denote by $\|p\|$ the standard Euclidean norm of $p$.
Given two points $p,q\in \RR^d$, we denote by $|pq|$ the Euclidean distance between them (that is, $\|p-q\|$).

\parag{Groups of rigid motions.}
A \emph{rigid motion} (or \emph{isometry}) of $\RR^d$ is a transformation $T:\RR^d \to \RR^d$ that preserves Euclidean distances.
That is, for every $v,u\in \RR^d$ we have that $|uv|=|T(u)T(v)|$.
It is well known that every rigid motion of $\RR^d$ is a combination of translations, rotations, and reflections.
A rigid motion is said to be \emph{proper} if it is a combination of translations and rotations.
In $\RR^2$, a rigid motion is proper if and only if for every three points $a,b,c\in \RR^d$, the path $a\to b \to c$ forms a right turn if and only if $T(a)\to T(b) \to T(c)$ forms a right turn (that is, if the rigid motion is \emph{orientation preserving}).
A similar definition exists in higher dimensions.
The \emph{Special Euclidean group} of $\RR^d$, denoted $\SE{d}$, is the group of proper rigid motions of $\RR^d$ under the operation of composition.

The \emph{Orthogonal group} $\Ort{d}$ is the group of rigid motions of $\RR^d$ that fix the origin.
It consists of the rotations around the origin and the reflections about a hyperplane incident to the origin.
Equivalently, we can think of $\Ort{d}$ as the set of rigid motions that take $\SSS^{d-1}$ to itself.
The \emph{Special Orthogonal group} $\SO{d}$ is the group of \emph{proper} rigid motions of $\RR^d$ that fix the origin (equivalently, of proper rigid motions that take $\SSS^{d-1}$ to itself).
It consists of the rotations around the origin.
Note that $\SO{d}$ is a subgroup of both $\Ort{d}$ and $\SE{d}$.

For any unproved claims in the the remainder of this section, see \cite[Sections 1.2--1.4]{Gallier08}.

\parag{Clifford algebras.}
A \emph{Clifford algebra} is defined with respect to a vector space and to a symmetric bilinear form.
We only define a special case of this algebra: the Clifford algebra associated with $\RR^d$ and the Euclidean norm.
This is a real unitary algebra $C\ell_d$ with a linear map $i:\RR^d \to C\ell_d$ that satisfies the following two properties.
For every $v\in \RR^d$, we have $i(v)^2 = -\|v\|^2\cdot \BI$, where $\BI$ is the multiplicative identity element of $C\ell_d$.
Moreover, if $A$ is a real algebra and $f:\RR^d \to A$ is a linear map satisfying $f(v)^2 = -\|v\|^2 \cdot \BI$ for all $v\in \R^d$, then there exists an algebra homomorphism $\phi: C\ell_d \to A$ such that $f = \phi\circ i$.
It can be shown that the algebra $C\ell_d$ is unique up to an isomorphism.

We now present a more constructive definition of the Clifford algebra $C\ell_d$ (the definition that we will actually rely is in the following paragraph).
For a vector space $V$, we denote by $V^{\otimes k}$ the $k$-fold tensor product of $V$ with itself.
Consider the direct sum $\bigoplus_{k\in \N}\left(\R^d\right)^{\otimes k}$, and let $\mathcal{I}$ be the ideal in this tensor algebra that is generated by all elements of the form $v\otimes v + \|v\|^2\cdot\BI$.
Then we can write $C\ell_d$ as the quotient
\[ \bigoplus_{k\in \N}\left(\R^d\right)^{\otimes k} / \mathcal{I}. \]
Let $j:\R^n \to \bigoplus_{k\in \N}\left(\R^d\right)^{\otimes k}$ be the natural injection, and let $\pi: \bigoplus_{k\in \N}\left(\R^d\right)^{\otimes k} \to \bigoplus_{k\in \N}\left(\R^d\right)^{\otimes k}/\mathcal{I}$ be the natural quotient map.
Then the linear map associated with $C\ell_d$ is the composition $\pi \circ j$.

For our purposes, it would be more intuitive to think of the Clifford algebra $C\ell_d$ as follows.
Let $e_1,\ldots,e_d$ denote the image of an element of the standard basis of $\R^n$ under the map $i$.
When dealing with tensor products of elements of $C\ell_d$, we will write $xy$ instead of $x\otimes y$.
Note that $C\ell_d$ is a $2^d$-dimensional real vector space with basis $\BI, e_1, \ldots, e_d, e_1e_2,\ldots, e_1e_d, e_2e_3,\ldots, e_1\cdots e_d$ (that is, the $2^d$ subsets of $\{e_1,\ldots,e_d\}$).
Recalling the definition of $\mathcal{I}$, we note that the Clifford algebra satisfies $e_j^2 = -\BI$ for every $1\le j \le d$.
Moreover, a simple argument shows that $e_je_k = -e_ke_j$ for every $1\le j,k \le d$ with $j\neq k$.
This explains why in the basis of $C\ell_d$ we do not have combinations of elements $e_1,\ldots,e_d$ where some $e_k$ repeats more than once.

Let $\alpha: C\ell_d \to C\ell_d$ be the automorphism satisfying $\alpha(\BI) = \BI$ and $\alpha(e_j) = -e_j$ for all $j$.
Let $t: C\ell_d \to C\ell_d$ be the anti-automorphism satisfying $t(xy) = t(y)t(x)$, $t(e_j) = e_j$ for all $j$, and $t(\BI) = \BI$.
For example, we have $\alpha(e_1 + e_1e_2) = -e_1 + e_1e_2$ and $t(e_1+e_1e_2) = e_1 + e_2e_1 = e_1-e_1e_2$.
It can be shown that the functions $\alpha$ and $t$ are uniquely defined.
We define the \emph{conjugate} of $x\in C\ell_d$ as $\overline{x}=\alpha(t(x))=t(\alpha(x))$.
We also define the norm $N(x) = x\overline{x}$.
Returning to the above example, we have $\overline{e_1 + e_1e_2} = -e_1 - e_1e_2$ and $N(e_1 + e_1e_2) = 2\cdot \BI$.
Note that for every $v\in \RR^d$ and $x = i(v)$ we have $\overline{x} = -x$, which in turn implies $N(x) = \|v\|^2$.

We are especially interested in elements $x\in C\ell_d$ that satisfy $\alpha(x)i(v)x^{-1} \in i(\RR^d)$ for every $v\in \RR^d$.
One advantage of working with such elements is that their norm is well behaved.
\begin{lemma} \label{le:Norm}
\hspace{2mm}\\
(i) Let $x\in C\ell_d$ satisfy $\alpha(x)i(v)x^{-1} \in i(\RR^d)$ for every $v\in \RR^d$.
Then $N(x)= r \cdot \BI$ for some $r\in \RR$. \\
(ii) Consider a second element $y\in C\ell_d$ that satisfies $\alpha(y)i(v)y^{-1} \in i(\RR^d)$ for every $v\in \RR^d$.
Then $N(xy) = N(x)N(y)$. \\
(iii) Let $x=i(u)$ for $u\in \RR^d$.
Then $\alpha(x)i(v)x^{-1} \in i(\RR^d)$ for every $v\in \RR^d$.
\end{lemma}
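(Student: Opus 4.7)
I would tackle the three parts in the order (iii), (i), (ii), since each uses machinery from the preceding one. Part (iii) is a direct calculation: expanding $i(u+v)^2 = -\|u+v\|^2\BI$ and canceling $i(u)^2$ and $i(v)^2$ gives the Clifford identity $i(u)i(v) + i(v)i(u) = -2\langle u,v\rangle\BI$. Using $\alpha(i(u)) = -i(u)$ together with $i(u)^{-1} = -i(u)/\|u\|^2$ (valid when $u \neq 0$; the case $u=0$ is trivial), one rewrites
\[ \alpha(i(u))\,i(v)\,i(u)^{-1} = \frac{i(u)\,i(v)\,i(u)}{\|u\|^2}, \]
and applies the Clifford identity to simplify the numerator. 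The result is $i\!\left(v - \frac{2\langle u,v\rangle}{\|u\|^2}u\right)$, namely the reflection of $v$ across the hyperplane perpendicular to $u$, which lies in $i(\RR^d)$.

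For (i), my plan is to show $M := \overline{x}x = r\BI$ for some $r\in\RR$; this suffices, because then $\overline{x} = r\cdot x^{-1}$ and hence $N(x) = x\overline{x} = r\BI$ as well. Starting from $\alpha(x)\,i(v) = i(w)\,x$ with $i(w) = \alpha(x)i(v)x^{-1}$, I would apply $t$ to both sides (using that $t$ fixes elements of $i(\RR^d)$) to obtain $i(v)\,\overline{x} = t(x)\,i(w)$. Eliminating $i(w)$ between the two relations, and using the algebraic identity $t(\overline{x}x) = t(x)\alpha(x) = \alpha(\overline{x}x)$, yields the key relation $\alpha(M)\,i(v) = i(v)\,M$ for all $v\in\RR^d$. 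Splitting $M = M_+ + M_-$ into its even and odd parts under $\alpha$ and matching parities on both sides produces $[M_+,\,i(v)] = 0$ and $M_-\,i(v) + i(v)\,M_- = 0$ for every $v$. The first identity forces $M_+$ to lie in the center of $C\ell_d$, hence in $\RR\BI$ (the only other potentially central element, $\omega = e_1\cdots e_d$ in odd dimensions, is odd and so cannot contribute to $M_+$). The main obstacle is the second identity: showing that no nonzero odd element of $C\ell_d$ anticommutes with every $i(v)$. My plan is to expand $M_-$ in the basis $\{e_A : |A|\ \text{odd}\}$ and compute $e_A e_j$ and $e_j e_A$ separately in the cases $j\in A$ and $j\notin A$; the anticommutation condition imposes opposite parities on $|A|$ in the two cases, so every partial support is ruled out except possibly $A = \{1,\ldots,d\}$, which is eliminated by a direct parity check on $\omega$ in each parity of $d$.

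Part (ii) is then a short computation. Since $\overline{\cdot} = \alpha\circ t$ with $\alpha$ an automorphism and $t$ an anti-automorphism, $\overline{xy} = \overline{y}\,\overline{x}$, and hence $N(xy) = xy\,\overline{y}\,\overline{x} = x\,N(y)\,\overline{x}$. By (i) applied to $y$, $N(y) = s\BI$ is a scalar; pulling it out gives $N(xy) = s\cdot x\overline{x} = N(y)N(x) = N(x)N(y)$, where the last equality uses that two scalar multiples of $\BI$ commute.
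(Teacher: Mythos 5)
Your proposal is correct, and worth contrasting with the paper's treatment: the paper simply cites Gallier (\cite[Propositions 1.6, 1.8]{Gallier08}) for parts (i) and (iii), and proves only part (ii) by the same one-line computation you give ($N(xy)=xN(y)\overline{x}=x\overline{x}N(y)=N(x)N(y)$, using that $N(y)$ is a scalar). What you supply is a self-contained argument for the two cited parts. Your part (iii) is the standard reflection computation and is fine. For part (i), the route you sketch is sound: from $\alpha(x)i(v)=i(w)x$ and $t$ applied to both sides, eliminating $i(w)$ does give $\alpha(M)i(v)=i(v)M$ with $M=\overline{x}x$; splitting by parity yields $[M_+,i(v)]=0$ and $\{M_-,i(v)\}=0$; $M_+$ central and even forces $M_+\in\RR\BI$ (the pseudoscalar $\omega=e_1\cdots e_d$ is central only when $d$ is odd, in which case it is odd and so absent from $M_+$); and the basis expansion of $M_-$ works because for $e_A$ to anticommute with every $e_j$ one needs $|A|$ odd for $j\notin A$ but $|A|$ even for $j\in A$, forcing $A=\{1,\ldots,d\}$ with $d$ even, which is however an even element and hence excluded from $M_-$. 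Finally, passing from $\overline{x}x=r\BI$ to $N(x)=x\overline{x}=r\BI$ via $\overline{x}=rx^{-1}$ is valid since $x$ is invertible by hypothesis. In short: same approach as the standard proof the paper references, just written out.
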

\begin{proof}
(i) See \cite[Proposition 1.8]{Gallier08}.

(ii) By part (i) of the lemma, $N(y)= r \cdot \BI$ for some $r\in \RR$, so $N(y)$ commutes with everything in $C\ell_d$.
This implies
\[ N(xy)= xy\overline{xy} = xy\overline{y}\hspace{0.6mm}\overline{x} = xN(y)\overline{x} = x\overline{x}N(y) = N(x)N(y). \]

(iii) See \cite[Proposition 1.6]{Gallier08}.
\end{proof}

Rather than working with all of $C\ell_d$, we will rely on the subalgebra
\[ C\ell_d^0 = \{x\in C\ell_d :\, \alpha(x) = x\}. \]
This is the $2^{d-1}$-dimensional subspace of $C\ell_d$ generated by the elements of the basis of $C\ell_d$ that are the product of an even number of distinct $e_j$'s.
Similarly, we set $C\ell_d^1 = \{x\in C\ell_d :\, \alpha(x) = -x\}$.
This is a $2^{d-1}$-dimensional subspace (not a subalgebra), and is generated by the elements of the basis of $C\ell_d$ that are the product of an odd number of distinct $e_j$'s.

\parag{Spin groups.}
Denote the multiplicative groups of $C\ell_d$ and $C\ell_d^0$ as $C\ell_d^\times$ and ${C\ell_d^{0\times}}$, respectively.
We define the Lie groups
\begin{align}
\Pin{d} &= \{ x\in C\ell_d^{\times} \ :\, N(x) = \BI \quad \text{ and } \quad \alpha(x)i(v)x^{-1} \in i(\R^d) \ \text{ for every } v\in \R^d\}, \nonumber \\
\Spin{d} &= \{ x\in C\ell_d^{0\times} \ :\, N(x) = \BI \quad \text{ and } \quad xi(v)x^{-1} \in i(\R^d) \ \text{ for every } v\in \R^d\}. \label{eq:SpinDef}
\end{align}
Note that in the definition of $\Spin{d}$ we can replace $xi(v)x^{-1}$ with $\alpha(x)i(v)x^{-1}$, since $x=\alpha(x)$ for every $x\in C\ell_d^{0\times}$.

An equivalent definition for $\Pin{d}$ is the set of elements that can be written as $i(v_1) i(v_2)\cdots i(v_k)$, where $v_1,\ldots,v_k\in \SSS_{d-1}$ (and $k$ is not fixed).
Similarly, an equivalent definition of $\Spin{d}$ is the set of elements that can be written as $i(v_1) i(v_2)\cdots i(v_k)$, where $v_1,\ldots,v_k\in \SSS_{d-1}$ and $k$ is even.

For $\gamma \in \Pin{d}$ and $v\in \R^d$, we denote the group action of $\gamma$ on $p$ as $p^\gamma$.
This group action is $v^\gamma = i^{-1}(\alpha(\gamma) i(v) \gamma^{-1})$.
Notice that $i$ is injective when considered as a function from $\RR^d$ to $i(\RR^d)$.
When $v\in \R^d$ we have $\alpha(\gamma) i(v) \gamma^{-1}\in i(\RR^d)$, so $v^\gamma = i^{-1}(\gamma i(v) \gamma^{-1})$ is well defined.

By Lemma \ref{le:Norm}, any $\gamma\in \Pin{n}$ satisfies
\[ N(\alpha(\gamma) i(v) \gamma^{-1}) = N(\alpha(\gamma)) N(i(v)) N(\gamma^{-1}) = N(i(v)) = \|v\|^2 \cdot \BI. \]
That is, the transformation of $\RR^d$ induced by the action of $\gamma$ preserves the Euclidean norm, and is thus in $\Ort{d}$.
Letting $\rho: \Pin{d} \to \Ort{d}$ be defined by $\rho(x)(v) = i^{-1}(\alpha(x)i(v)x^{-1})$, we get that $\rho$ is surjective with kernel $\{\BI, -\BI\}$.
That is, $\Pin{d}$ is a double cover of $\Ort{d}$.
In the special case where $\gamma = i(w) \in i(\R^d)\subseteq \Pin{d}$, the action of $\rho(\gamma)$ corresponds to a reflection of $\RR^d$ about the hyperplane orthogonal to $w$ and incident to the origin.

The restricted transformation $\rho: \Spin{d} \to \SO{d}$ is also surjective with kernel $\{\BI,-\BI\}$.
For some intuition, recall that the composition of two reflections about hyperplanes incident to the origin is a rotation centered at the origin.
Thus, the tensor product of two elements of $i(\R^d)$ corresponds to a rotation in $\Spin{d}$.
Similarly, the composition of rotations around the origin is a rotation around the origin.

\ignore{
By the above, if we wish to consider the intersections of subsets of $\SO{d}$, we may instead consider the intersections of their preimages under $\rho$.
This will be our main use of $\Spin{d}$.
Specifically, when we study distinct distances using rotations, it will be more convenient to think of the rotations as being in $\Spin{d}$ rather than in $\SO{d}$.
} 

A proof of the following lemma can be found in \cite[Section 1.4]{Gallier08}.

\begin{lemma} \label{le:C4InR4}
Let $d\le 5$ and let $x\in C\ell_d^0$ satisfy $N(x) = \BI$.
Then for every $v\in \R^d$ we have $xi(v)x^{-1} \in i(\RR^d)$.
\end{lemma}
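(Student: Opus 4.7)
My plan is to analyze $y := xi(v)x^{-1}$ by its components in the natural Clifford grading. Because $x, x^{-1}$ are even and $i(v)$ is odd, parity already forces $y \in C\ell_d^1$, so the task is to rule out odd-grade components beyond grade $1$.

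The first step I would take is to extract a symmetry of $y$ from the hypothesis $N(x) = \BI$. Since $x \in C\ell_d^0$ gives $\alpha(x) = x$, we have $\bar{x} = t(\alpha(x)) = t(x)$, and $x\bar{x} = \BI$ therefore yields $x^{-1} = t(x)$. Applying the anti-automorphism $t$ to $y$ and using $t(i(v)) = i(v)$, a short computation gives
\[ t(y) = t(x^{-1})\,t(i(v))\,t(x) = x\cdot i(v)\cdot x^{-1} = y, \]
so $y$ is $t$-invariant.

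Second, I would classify odd $t$-invariant elements of $C\ell_d$ by grade. On a basis monomial $e_{i_1}\cdots e_{i_k}$ the anti-automorphism $t$ contributes the sign $(-1)^{k(k-1)/2}$, so $t$-invariant basis monomials are exactly those with $k \equiv 0,1\pmod{4}$. Combined with being odd, the admissible grades are $k \equiv 1\pmod{4}$, i.e., $k \in \{1,5,9,\ldots\}$. For $d \le 4$ this leaves only $k = 1$, meaning $y \in i(\RR^d)$, which finishes those cases.

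The main obstacle is $d = 5$, where $k = 5$ is also admissible. Writing $y = y_1 + \mu\omega$ with $y_1 \in i(\RR^5)$ and $\omega := e_1 e_2 e_3 e_4 e_5$, the goal reduces to showing $\mu = 0$. I would exploit two features of dimension five: (i) since $d=5$ is odd, $\omega$ is central in $C\ell_5$, and (ii) $\omega^2 = -\BI$. Using the cyclic property of the scalar-part projection $\langle\cdot\rangle_0$ (which holds on basis monomials and extends by linearity) together with the centrality of $\omega$,
\[ \langle\omega y\rangle_0 = \langle\omega\, x\, i(v)\, x^{-1}\rangle_0 = \langle x^{-1}\omega x \cdot i(v)\rangle_0 = \langle\omega\cdot i(v)\rangle_0. \]
Each $\omega e_j$ collapses to a grade-$4$ monomial, so $\omega\,i(v)$ has no scalar part; the right side vanishes. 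On the other hand $\omega y_1$ is also purely grade-$4$, so $\langle\omega y\rangle_0 = \mu\langle\omega^2\rangle_0 = -\mu$. Hence $\mu = 0$, completing the proof. The pseudoscalar bookkeeping in this last case is the main technical hurdle; the analogous argument is expected to fail for $d \ge 6$, since grade-$5$ elements are no longer central and carry no analogous cyclic-trace identity.
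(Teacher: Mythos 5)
The paper does not prove this lemma itself; it cites \cite[Section 1.4]{Gallier08} and uses the result as a black box. So there is no in-paper proof to compare against. That said, your argument is correct and self-contained, and it is worth recording.

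Your two structural observations are exactly the right ones. Since $x$ is even and $\bar x = t(\alpha(x)) = t(x)$, the condition $N(x)=\BI$ gives $x^{-1}=t(x)$, and a direct computation then shows $t(y)=y$ for $y=xi(v)x^{-1}$. Parity forces $y$ odd, and since $t$ acts on a grade-$k$ monomial by $(-1)^{k(k-1)/2}$, the $t$-invariance leaves only grades $k\equiv 1 \pmod 4$. This immediately handles $d\le 4$. For $d=5$ the residual grade-$5$ piece is killed by the pseudoscalar pairing: $\omega=e_1\cdots e_5$ is central with $\omega^2=-\BI$, so by cyclicity of $\langle\cdot\rangle_0$ one has $\langle\omega y\rangle_0 = \langle\omega i(v)\rangle_0 = 0$, while the left side equals $-\mu$ where $\mu$ is the pseudoscalar coefficient; hence $\mu=0$. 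I checked the sign and grading bookkeeping (the sign $(-1)^{k(k-1)/2}$, the value $\omega^2=-\BI$, and the fact that $\omega\cdot(\text{grade }1)$ is purely grade $4$ in $C\ell_5$) and it all holds. Your closing remark about $d\ge 6$ is also apt: grade $5$ is no longer the top grade, the pseudoscalar $e_1\cdots e_6$ is grade $6$ and not relevant, and there is no central element to run the trace argument against the grade-$5$ component, which is consistent with the lemma being false for $d\ge 6$.

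One small presentational note: the cyclicity $\langle ab\rangle_0=\langle ba\rangle_0$ deserves a one-line justification (it reduces to the observation that $\langle e_I e_J\rangle_0\ne 0$ only when $I=J$, in which case $e_Ie_J=e_Je_I$), but this is standard and you flagged it appropriately.
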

The claim of Lemma \ref{le:C4InR4} is false for $d\ge 6$.
Combining this lemma with the definition in \eqref{eq:SpinDef} yields the following result.
\begin{corollary}\label{co:SpinDefLowD}
For $d\le 5$, we have
\[ \Spin{d} = \{ x\in C\ell_d^{0\times} \ :\, N(x) = \BI \}. \]
\end{corollary}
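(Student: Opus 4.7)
The plan is to show that the corollary follows immediately by combining the definition of $\Spin{d}$ in \eqref{eq:SpinDef} with Lemma \ref{le:C4InR4}. I would prove the two inclusions separately.

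The inclusion $\Spin{d} \subseteq \{x \in C\ell_d^{0\times} : N(x) = \BI\}$ is immediate from the definition in \eqref{eq:SpinDef}, since that definition explicitly requires $x \in C\ell_d^{0\times}$ and $N(x) = \BI$ (alongside the extra condition $xi(v)x^{-1} \in i(\R^d)$ for all $v \in \R^d$).

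For the reverse inclusion, I would start with $x \in C\ell_d^{0\times}$ satisfying $N(x) = \BI$. First, Lemma \ref{le:C4InR4} (which requires $d \le 5$) applies directly: it tells us that $xi(v)x^{-1} \in i(\R^d)$ for every $v \in \R^d$, which is precisely the remaining condition in \eqref{eq:SpinDef}. Hence $x \in \Spin{d}$. A minor bookkeeping point worth mentioning is that the hypothesis $x \in C\ell_d^{0\times}$ (i.e., $x$ invertible) is actually automatic once $N(x) = \BI$: the relation $x\overline{x} = N(x) = \BI$ exhibits $\overline{x}$ as a two-sided inverse of $x$ in $C\ell_d^0$ (using, e.g., that the same identity with $x$ and $\overline{x}$ swapped also holds), so one can even state the corollary with just $x \in C\ell_d^0$ in place of $x \in C\ell_d^{0\times}$ without loss.

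There is no substantive obstacle: the whole content of the corollary is that for $d \le 5$ the second defining condition of $\Spin{d}$ is redundant, and that redundancy is exactly what Lemma \ref{le:C4InR4} asserts. The only reason to state this as a separate corollary is to record the clean simplified description for the low-dimensional cases, which are presumably the ones needed in the subsequent sections.
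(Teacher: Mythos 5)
Your proof is correct and is exactly the paper's argument: the paper introduces the corollary with the single sentence ``Combining this lemma with the definition in \eqref{eq:SpinDef} yields the following result,'' which is precisely your observation that Lemma~\ref{le:C4InR4} makes the second defining condition in \eqref{eq:SpinDef} redundant when $d\le 5$. One small caveat on your added aside: the parenthetical justification that $\overline{x}$ is a two-sided inverse because ``the same identity with $x$ and $\overline{x}$ swapped also holds'' is circular, since $N(\overline{x})=\overline{x}\,\overline{\overline{x}}=\overline{x}x$ and asserting this equals $\BI$ is exactly the two-sided-inverse claim; the clean justification is the standard fact that in a finite-dimensional unital associative algebra a one-sided inverse is automatically two-sided, so the observation itself stands but needs that argument rather than the one you sketched.
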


We will also rely on the following observation.
\begin{lemma} \label{le:AntiCommute}
If $u,v \in \RR^d$ are orthogonal vectors then $i(u)i(v) = -i(v)i(u)$.
\end{lemma}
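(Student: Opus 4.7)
The plan is to exploit the Clifford relation $i(w)^2 = -\|w\|^2 \cdot \BI$ by applying it to the single vector $w = u+v$ and expanding, using the linearity of the map $i$.

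First I would write $i(u+v) = i(u) + i(v)$ and square both sides in the algebra $C\ell_d$, obtaining
\[ i(u+v)^2 = i(u)^2 + i(u)i(v) + i(v)i(u) + i(v)^2 = -\|u\|^2 \BI + i(u)i(v) + i(v)i(u) -\|v\|^2 \BI, \]
where I have substituted the defining relations $i(u)^2 = -\|u\|^2\BI$ and $i(v)^2 = -\|v\|^2\BI$.

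On the other hand, the defining relation applied directly to $u+v$ gives $i(u+v)^2 = -\|u+v\|^2 \BI$, and orthogonality of $u$ and $v$ yields $\|u+v\|^2 = \|u\|^2 + \|v\|^2$ by the Pythagorean identity. Equating the two expressions and cancelling the common terms $-\|u\|^2 \BI - \|v\|^2\BI$, I obtain $i(u)i(v) + i(v)i(u) = 0$, which is the desired anticommutation relation. There is no real obstacle here; the only thing to be careful about is making sure we are using $i$'s linearity and the universal Clifford relation, rather than trying to argue from the anticommutation $e_j e_k = -e_k e_j$ of basis elements (which is really a consequence of exactly this argument, applied to the orthogonal basis $e_1,\ldots,e_d$).
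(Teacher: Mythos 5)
Your proof is correct, and it takes a genuinely different (and more elementary) route than the paper's. The paper normalizes $u,v$ to unit vectors $u',v'$, invokes the existence of a rotation represented by some $\gamma\in\Spin{d}$ that carries $e_1\mapsto u'$ and $e_2\mapsto v'$, and then conjugates the known basis anticommutation $e_1e_2=-e_2e_1$ by $\gamma$; this requires treating $d=2$ as a separate case (where such a simultaneous rotation need not exist) and tacitly assumes $u,v\neq 0$ so the normalization makes sense. Your polarization argument --- squaring $i(u+v)$, applying the universal Clifford relation, and cancelling --- uses only the linearity of $i$ and the defining identity $i(w)^2=-\|w\|^2\BI$, so it works uniformly in every dimension $d\ge 1$, degenerates gracefully when $u$ or $v$ is zero, and avoids the $\Spin{d}$ machinery entirely. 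It is the standard textbook derivation and, as you observe, it is precisely the ``simple argument'' the paper alludes to when it asserts $e_je_k=-e_ke_j$ for $j\neq k$; the paper's own proof of this lemma is thus the heavier of the two. The only thing the paper's route buys is a small illustration of how conjugation by elements of $\Spin{d}$ transports algebraic relations, but for the purpose of establishing the lemma your approach is shorter, cleaner, and strictly more general.
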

\begin{proof}
We set $u' = u/\|u\|$ and $v' = v/\|v\|$, so that $u',v' \in \SSS^{d-1}$.
Since $u$ and $v$ are orthogonal, so are $u'$ and $v'$.
Thus, there exists $\gamma\in \Spin{d}$ that corresponds to a rotation taking $e_1$ to $u'$ and $e_2$ to $v'$.
Since $e_1e_2 = -e_2e_1$, we have
\[ \gamma e_1\gamma^{-1}\gamma e_2\gamma^{-1} = -\gamma e_2\gamma^{-1}\gamma e_1\gamma^{-1} \quad \text{ which implies }  \quad  i(u')i(v') = -i(v')i(u').\]
The assertion of the lemma is obtained by multiplying both sides by $\|u\|\cdot \|v\|$.

The above argument holds for $d\ge 3$.
When $d=2$, there might not exist $\gamma\in \Spin{d}$ that takes $e_1$ to $u'$ and $e_2$ to $v'$.
In that case we can consider instead $\gamma\in \Spin{d}$ that takes $e_1$ to $v'$ and $e_2$ to $u'$
\end{proof}

\section{The group $\Spun{d}$}\label{sec:SpunD}

In this section we introduce the group $\Spun{d}$.
We first construct a variant $X_d$ of the Clifford algebra $C\ell_d$.
Consider the direct sum $\bigoplus_{k\in \N}\left(\RR^{d+2}\right)^{\otimes k}$, and let $\mathcal{I}$ be the ideal in this tensor algebra that is generated by
\begin{align*}
&\left\{e_j \otimes e_k+e_k \otimes e_j :\ 1 \le 1\le j,k \le d+1 \right\}  \bigcup \left\{e_{d+2} \otimes e_{d+2} \right\} \\
& \hspace{60mm} \bigcup \left\{e_j\otimes e_j+\BI, e_{d+2}\otimes e_j - e_j \otimes e_{d+2} :\ 1 \le 1\le j \le d+1 \right\}.
\end{align*}
Then we write $X_d$ as the quotient
\[ \bigoplus_{k\in \N}\left(\R^{d+2}\right)^{\otimes k} / \mathcal{I}. \]

For brevity we write $e_ie_j$ instead of $e_i \otimes e_j$.
Let $i: \RR^{d}\to X_d$ be the linear map that takes the standard basis elements of $\RR^{d}$ to $e_1,\hdots, e_{d}$, respectively.
Let $\alpha: X_d \to X_d$ be the automorphism satisfying $\alpha(\BI) = \BI$, $\alpha(e_j) = -e_j$ for every $1\le j \le d+1$, and $\alpha(e_{d+2})=e_{d+2}$.
Let $t: X_d \to X_d$ be the anti-automorphism satisfying $t(xy) = t(y)t(x)$, $t(e_j) = e_j$ for every $1\le j\le d+2$, and $t(\BI) = \BI$.
For example, when $d=4$ we have
\begin{align*}
\alpha(e_3e_4 + e_1e_5e_6 + e_2e_6) &= e_3e_4 + e_1e_5e_6 - e_2e_6, \\
t(e_3e_4 + e_1e_5e_6 + e_2e_6) &= -e_3e_4 - e_1e_5e_6 + e_2e_6.
\end{align*}

For every $x\in X$, we define the \emph{conjugate} of $x$ as $\overline{x}=\alpha(t(x))=t(\alpha(x))$, and the norm of $x$ as $N(x) = x\overline{x}$.
Note that for every $x = i(v) \in i(\RR^{d})$ we have $\overline{x} = -x$, which in turn implies $N(x) = \|v\|^2\cdot \BI$.

We define the standard basis of $X_d$ to consist of $\BI$ and of the tensor products of any number of distinct elements from $\{e_1,\ldots,e_{d+2}\}$.
It is not difficult to verify that this set generates $X_d$ and is linearly independent.
Let $Z_d^0\subset X_d^0$ be the subspace generated by $\BI$ and by products of an even number of elements from $\{e_1,e_2,\ldots,e_d,e_{d+1}e_{d+2}\}$ (note that $e_{d+1}e_{d+2}$ is a single element).

For $1\le k \le d+1$, note that the subspace of $X_k$ generated by $\BI$ and by products of distinct elements from $\{e_1,\ldots,e_{k}\}$ is a subalgebra of $X_d$.
This subalgebra is isomorphic to the Clifford algebra $C\ell_{k}$, and we thus refer to it as $C\ell_{k}$.
With this notation, the above definition of the norm $N(\cdot)$ of $X_d$ generalizes the definition of a norm in $C\ell_{k}$.
Similarly, the subalgebra $C\ell_{d}^0$ is contained in $Z_d^0$.

We are now ready to define our variant of $\Spin{d}$.
\begin{align}
\Spun{d} = \big\{z\in Z_d^0 :\, N(z) = \BI &\mbox{ and for every } v\in \RR^{d} \text{ there exists } w\in \RR^d \nonumber \\
& \text{ such that } z \left(e_{d+2} i(v) + e_{d+1}\right)\overline{z} = e_{d+2}i(w) + e_{d+1}  \big\}. \label{eq:SpunDef}
\end{align}

We will prove that $\Spun{d}$ is indeed a group and a double cover of $\SE{d}$.
But first  we give a brief intuition for the definition in \eqref{eq:SpunDef}.
We can think of this definition as extending $\Spin{d}$ with the two extra elements $e_{d+1}$ and $e_{d+2}$.
The addition of $e_{d+1}$ leads us to the group $\Spin{d+1}$, which is a double cover of $\SO{d+1}$.
The role of $e_{d+2}$ is to imitate a scaling limit argument as in \cite{Tblog11}.
We think of $e_{d+2}$ as a small $\eps>0$, or as the restriction to a small disc on $\SSS^{d}$.
As $\eps$ approaches zero, this disc behaves more like a flat so $\Spin{d+1}$ becomes more similar to $\SE{d}$.

Note that for every $\gamma \in \Spin{d}$ we have that $\gamma e_{d+1} = e_{d+1}\gamma$, and similarly for $e_{d+2}$.

\begin{theorem} \label{th:SpunGroup}
The set $\Spun{d}$ is a group under the product operation of $X_d$.
Moreover, the inverse of every $x\in \Spun{d}$ is $\overline{x}$.
\end{theorem}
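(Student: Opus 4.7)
The plan is to verify the three group axioms (identity, closure, inverses), using $\overline{z}$ as the candidate inverse, thereby simultaneously proving that $\Spun{d}$ is a group and that $z^{-1}=\overline{z}$; associativity is inherited from $X_d$. I will rely on three preliminary facts. First, $Z_d^0$ is a subalgebra of $X_d$ closed under conjugation: the former follows from its description as the span of products of an even number of elements from $\{e_1,\ldots,e_d,e_{d+1}e_{d+2}\}$, and the latter because $\alpha$ acts as the identity on $Z_d^0$ while $t$ preserves this spanning set up to sign (note that $t(e_{d+1}e_{d+2})=e_{d+1}e_{d+2}$ since $e_{d+2}$ commutes with $e_{d+1}$). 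Second, for any $z\in Z_d^0$ with $z\overline{z}=\BI$ one also has $\overline{z}z=\BI$, by the standard finite-dimensional-algebra argument: the $\RR$-linear map $L_z:X_d\to X_d$, $x\mapsto zx$, is surjective (since $L_z(\overline{z}x)=x$) and therefore injective, so $L_z(\overline{z}z)=z=L_z(\BI)$ forces $\overline{z}z=\BI$. Third, the elements $e_{d+2}e_1,\ldots,e_{d+2}e_d$ are linearly independent basis elements of $X_d$, so the $w$ in the defining equation of $\Spun{d}$ is unique, yielding a well-defined map $\phi_z:\RR^d\to\RR^d$ sending $v$ to this $w$.

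Given these, the identity and closure axioms are immediate. For $\BI$: it lies in $Z_d^0$, $N(\BI)=\BI$, and the second defining condition holds with $w=v$. For closure, given $z_1,z_2\in\Spun{d}$, the product $z_1z_2$ lies in $Z_d^0$; since $N(z_2)=\BI$ is central, $N(z_1z_2)=z_1z_2\overline{z_2}\,\overline{z_1}=z_1N(z_2)\overline{z_1}=\BI$; and the second defining condition follows by applying the condition for $z_2$ and then for $z_1$ to $z_1\bigl(z_2(e_{d+2}i(v)+e_{d+1})\overline{z_2}\bigr)\overline{z_1}$.

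The heart of the argument is to show $\overline{z}\in\Spun{d}$. The first two defining properties follow from the preliminaries ($\overline{z}\in Z_d^0$ and $N(\overline{z})=\overline{z}\,\overline{\overline{z}}=\overline{z}z=\BI$). For the second, I will prove $\phi_z$ is a bijection of $\RR^d$ and then set $w=\phi_z^{-1}(v)$. A direct expansion of $z(e_{d+2}i(\lambda v_1+\mu v_2)+e_{d+1})\overline{z}$, splitting by linearity of $i$ and subtracting the contribution $ze_{d+1}\overline{z}=e_{d+2}i(\phi_z(0))+e_{d+1}$, shows that $\phi_z(\lambda v_1+\mu v_2)=\lambda\phi_z(v_1)+\mu\phi_z(v_2)+(1-\lambda-\mu)\phi_z(0)$, so $\phi_z$ is affine. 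Injectivity follows from the second preliminary: if $\phi_z(v_1)=\phi_z(v_2)$, multiplying the resulting equality $z(e_{d+2}i(v_1)+e_{d+1})\overline{z}=z(e_{d+2}i(v_2)+e_{d+1})\overline{z}$ on the left by $\overline{z}$ and on the right by $z$ gives $e_{d+2}i(v_1)=e_{d+2}i(v_2)$, and hence $v_1=v_2$ by the third preliminary. Since an injective affine map on $\RR^d$ is automatically bijective, $\phi_z^{-1}$ exists. Applying the same left/right multiplication to the defining equation at $w=\phi_z^{-1}(v)$ yields $\overline{z}(e_{d+2}i(v)+e_{d+1})\overline{\overline{z}}=e_{d+2}i(w)+e_{d+1}$, confirming the second condition for $\overline{z}$. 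Together with $z\overline{z}=\overline{z}z=\BI$, this shows $\Spun{d}$ is a group with $z^{-1}=\overline{z}$.

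The main obstacle is the inverse step: turning the existential condition in the definition of $\Spun{d}$ into honest bijectivity of the induced map $\phi_z$ on $\RR^d$. This requires both the affineness computation and the cancellation $\overline{z}z=\BI$ coming from the finite-dimensional-algebra fact; without the latter, injectivity of $\phi_z$ cannot be extracted by left/right multiplication.
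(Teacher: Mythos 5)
Your proof is correct, but it takes a genuinely different route from the paper's. The paper's proof of this theorem works by writing $x = \gamma_1 + e_{d+1}e_{d+2}\gamma_2$ with $\gamma_1 \in C\ell_{d+1}^0$ and $\gamma_2 \in C\ell_d^1$, extracting the relations $\gamma_1\overline{\gamma_1} = \BI$, $\gamma_1 e_{d+1}\overline{\gamma_2} = -e_{d+1}\gamma_2\overline{\gamma_1}$, and $\gamma_1 e_{d+1}\overline{\gamma_1} = e_{d+1}$ by comparing coefficients, and then deducing both $\overline{x}x=\BI$ and $\overline{x}\in\Spun{d}$ through a chain of explicit algebraic manipulations (en route establishing that $\gamma_1\in\Spin{d}$, a structural fact the paper reuses in Lemma \ref{le:SpunDesc}). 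You replace this with two slick abstract steps: first, the finite-dimensional-algebra observation that $z\overline{z}=\BI$ forces $\overline{z}z=\BI$ because left multiplication $L_z$ is surjective and hence injective; second, showing that the induced map $\phi_z:\RR^d\to\RR^d$ is affine (via the barycentric identity $\phi_z(\lambda v_1+\mu v_2)=\lambda\phi_z(v_1)+\mu\phi_z(v_2)+(1-\lambda-\mu)\phi_z(0)$) and injective (by the two-sided cancellation you just earned), hence bijective, so that $\overline{z}$ inherits the second defining condition with $w=\phi_z^{-1}(v)$. Both approaches are sound. Yours is shorter, more conceptual, and avoids the coordinate bookkeeping; the paper's is more explicit and, as a byproduct, furnishes the decomposition $x=\gamma_1+e_{d+1}e_{d+2}\gamma_2$ with $\gamma_1\in\Spin{d}$ that it then invokes in later lemmas, so the longer computation is not wasted in the context of the full paper. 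One small point worth making explicit in your write-up: that $Z_d^0$ is a subalgebra hinges on $e_{d+1}e_{d+2}$ anticommuting with each $e_j$ ($j\le d$) and squaring to $0$, so the ``even'' span really is closed under products; you gesture at this but do not say it outright.
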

\begin{proof}
We first show that for every $x,y\in \Spun{d}$ we have $xy\in \Spun{d}$.
Indeed, note that
\[ N(xy) = xy\overline{y} \hspace{0.6mm} \overline{x} = xN(y)\overline{x} = x\overline{x} = N(x) = \BI. \]
Moreover, for every $v\in \RR^d$ there exist $u,w\in \RR^d$ such that
\begin{align*}
xy \left(e_{d+2} i(v) + e_{d+1}\right)\overline{xy} &= x (y(e_{d+2} i(v) + e_{d+1})\overline{y})\overline{x} = x (e_{d+2}i(w) + e_{d+1})\overline{x} = e_{d+2}i(u) + e_{d+1}.
\end{align*}

Since the product operation of $X_d$ is clearly associative and $\BI$ is the identity element, it remains to prove that every $x\in \Spun{d}$ has an inverse in $\Spun{d}$.
We will prove that $x\overline{x} = \overline{x}x = \BI$ and that $\overline{x}\in \Spun{d}$.
Fix $x\in \Spun{d}$, and write $x = \gamma_1 + e_{d+1}e_{d+2}\gamma_2$ where $\gamma_1 \in C\ell_{d+1}^0$ and $\gamma_2 \in C\ell_d^1$.
Since $x\overline{x}=N(x)=\BI$, we have
\[ \BI = x\overline{x} = (\gamma_1 + e_{d+1}e_{d+2}\gamma_2) (\overline{\gamma_1} + e_{d+1}e_{d+2}\overline{\gamma_2}) = \gamma_1\overline{\gamma_1} + e_{d+2}(\gamma_1e_{d+1}\overline{\gamma_2}+e_{d+1}\gamma_2\overline{\gamma_1}).\]
By comparing the parts that do not involve $e_{d+2}$ on both sides of the equation, we get $\gamma_1\overline{\gamma_1} = \BI$.
By comparing the parts that contain $e_{d+2}$, we get $\gamma_1e_{d+1}\overline{\gamma_2} = -e_{d+1}\gamma_2\overline{\gamma_1}$.

Since $x\in \Spun{d}$, for every $v\in \RR^d$ there exists $w \in \RR^d$ such that $x(e_{d+2} i(v) + e_{d+1})\overline{x} = e_{d+2}i(w)+e_{d+1}$.
In particular, there exists $w_0 \in \RR^d$ such that $xe_{d+1}\overline{x} = x(e_{d+2}\cdot i(0) + e_{d+1})\overline{x} = e_{d+2}i(w_0) + e_{d+1}$.
Fixing $v,w\in \RR^d$ as defined above and setting $u=w-w_0$ gives
\[ e_{d+2}xi(v) \overline{x} = x(e_{d+2} i(v) + e_{d+1})\overline{x} - xe_{d+1}\overline{x} = e_{d+2}i(w)+e_{d+1} - (e_{d+2}i(w_0) + e_{d+1}) = e_{d+2}i(u).\]

We also have
\[ e_{d+2}xi(v) \overline{x} = e_{d+2}(\gamma_1 + e_{d+1}e_{d+2}\gamma_2)i(v) (\overline{\gamma_1} + e_{d+1}e_{d+2}\overline{\gamma_2}) = e_{d+2}\gamma_1 i(v) \overline{\gamma_1}.\]
Combining the above gives $\gamma_1 i(v) \overline{\gamma_1} = i(u) \in i(\RR^d)$.
That is, $\gamma_1 i(v) \overline{\gamma_1} \in i(\RR^d)$ for every $v \in \RR^d$.

We have
\begin{align}
e_{d+2}i(w_0) + e_{d+1} = xe_{d+1}\overline{x} &= \left(\gamma_1 + e_{d+1}e_{d+2}\gamma_2\right)e_{d+1} \left(\overline{\gamma_1} + e_{d+1}e_{d+2}\overline{\gamma_2}\right) \nonumber \\
&= \gamma_1e_{d+1} \overline{\gamma_1} + e_{d+2}\left(\gamma_1e_{d+1}e_{d+1}\overline{\gamma_2} + e_{d+1}\gamma_2e_{d+1}\overline{\gamma_1} \right). \label{eq:w0GroupLemma}
\end{align}
By again comparing the terms that do not involve $e_{d+2}$ we get $\gamma_1e_{d+1} \overline{\gamma_1} = e_{d+1}$.

Since $\gamma_1 i(v) \overline{\gamma_1} \in i(\RR^d)$ for every $v\in \RR^d$ and $\gamma_1e_{d+1} \overline{\gamma_1} = e_{d+1}$, we get that $\gamma_1 i(v') \overline{\gamma_1} \in i(\RR^{d+1})$ for every $v'\in \RR^{d+1}$.
Combining this with $N(\gamma_1) = \gamma_1\overline{\gamma_1} = \BI$ implies that $\gamma_1 \in \Spin{d+1}$.
In particular, the inverse of $\gamma_1$ is $\overline{\gamma_1}$.
Multiplying the above equation $\gamma_1e_{d+1} \overline{\gamma_1} = e_{d+1}$ by $\gamma_1$ from the right leads to $\gamma_1e_{d+1} = e_{d+1}\gamma_1$.
Since $\gamma_1$ commutes with $e_{d+1}$ we have $\gamma_1 \in C\ell_{d}^0$, which in turn implies $\gamma_1 \in \Spin{d}$.

We next wish to show that $\overline{x}x =\BI$.
Since $x\overline{x}=\BI$, it suffices to prove that $x\overline{x} = \overline{x}x$, or equivalently
\begin{align*}
\gamma_1\overline{\gamma_1} + e_{d+2}\left(\gamma_1e_{d+1}\overline{\gamma_2} + e_{d+1}\gamma_2\overline{\gamma_1}\right) = \overline{\gamma_1}\gamma_1 + e_{d+2}\left(e_{d+1}\overline{\gamma_2}\gamma_1 + \overline{\gamma_1}e_{d+1}\gamma_2\right).
\end{align*}

Since the inverse of $\gamma_1$ is $\overline{\gamma_1}$, we have that $\gamma_1\overline{\gamma_1} = \BI = \overline{\gamma_1}\gamma_1$.
It remains to prove that
\begin{align*}
\gamma_1e_{d+1}\overline{\gamma_2} + e_{d+1}\gamma_2\overline{\gamma_1} = e_{d+1}\overline{\gamma_2}\gamma_1 + \overline{\gamma_1}e_{d+1}\gamma_2.
\end{align*}
Since $e_{d+1}$ commutes with $\gamma_1$ and $\overline{\gamma_1}$, this equation becomes $\gamma_1\overline{\gamma_2} + \gamma_2\overline{\gamma_1} = \overline{\gamma_2}\gamma_1 + \overline{\gamma_1}\gamma_2$.

Above we proved that $\gamma_1e_{d+1}\overline{\gamma_2} = -e_{d+1}\gamma_2\overline{\gamma_1}$.
Since $e_{d+1}$ commutes with $\gamma_1$ and $\overline{\gamma_1}$, we get $\gamma_1\overline{\gamma_2} = -\gamma_2\overline{\gamma_1}$.
Multiplying by $\overline{\gamma_1}$ from the left and by $\gamma_1$ from the right gives $\overline{\gamma_2}\gamma_1 = -\overline{\gamma_1}\gamma_2$.
Combining these two inequalities leads to the required equation $\gamma_1\overline{\gamma_2} + \gamma_2\overline{\gamma_1} = 0\cdot \BI =\overline{\gamma_2}\gamma_1 + \overline{\gamma_1}\gamma_2$.
We conclude that $\overline{x}x =\BI$.
That is, $x^{-1} = \overline{x}$.

To complete the proof of the lemma we need to show that $\overline{x} \in \Spun{d}$.
We already know that $N(\overline{x})=\overline{x}x =\BI$.
It remains to prove that for every $v\in \RR^d$ there exists $w\in \RR^d$ such that $\overline{x} \left(e_{d+2}i(v) + e_{d+1}\right) x = e_{d+2}i(w) + e_{d+1}$.

By considering the coefficients of $e_{d+2}$ in \eqref{eq:w0GroupLemma}, we get
\[i(w_0) = \gamma_1e_{d+1}e_{d+1}\overline{\gamma_2} + e_{d+1}\gamma_2e_{d+1}\overline{\gamma_1} = \gamma_2\overline{\gamma_1} -\gamma_1\overline{\gamma_2}. \]
Multiplying by $\gamma_1$ from the right and by $\overline{\gamma_1}$ from the left gives $\overline{\gamma_1} i(w_0)\gamma_1 = \overline{\gamma_1}\gamma_2 -\overline{\gamma_2}\gamma_1$.
Since $\overline{\gamma_1}\in \Spin{d}$, there exists $w_1\in \RR^d$ such that $\overline{\gamma_1} i(w_0)\gamma_1=i(w_1)$.

We have that
\begin{align*}
\overline{x}e_{d+1}x &=  \left(\overline{\gamma_1} + e_{d+1}e_{d+2}\overline{\gamma_2}\right)e_{d+1} \left(\gamma_1 + e_{d+1}e_{d+2}\gamma_2\right) = e_{d+1} + e_{d+2}\left(\overline{\gamma_2}\gamma_1-\overline{\gamma_1}\gamma_2\right) = e_{d+1} - e_{d+2}i(w_1).
\end{align*}

Since $\overline{\gamma_1}\in \Spin{d}$, for every $v\in \RR^d$ there exists $u\in \RR^d$ such that $\overline{\gamma_1}i(v)\gamma_1 = i(u)$.
By combining the above, we get
\begin{align*}
\overline{x} \left(e_{d+2}i(v) + e_{d+1}\right) x &= e_{d+2}\overline{x}i(v) x + \overline{x} e_{d+1} x \\
&= e_{d+2}\left(\overline{\gamma_1} + e_{d+1}e_{d+2}\overline{\gamma_2}\right)i(v)\left(\gamma_1 + e_{d+1}e_{d+2}\gamma_2\right) + e_{d+1} - e_{d+2}i(w_1) \\
&= e_{d+2}\left(\overline{\gamma_1}i(v)\gamma_1 - i(w_1)\right) + e_{d+1} = e_{d+2}i(u-w_1)+e_{d+1}.
\end{align*}
\end{proof}

Now that we established the $\Spun{d}$ is a group, we start to study its structure.

\begin{lemma} \label{le:SpunDesc}
We have
\emph{$\Spun{d} = \{ \gamma\left(\BI+e_{d+1}e_{d+2}i(v)\right) \, :\, \gamma \in \Spin{d}, \ v\in \RR^d\}$}.
Every element of \emph{$\Spun{d}$} corresponds to a unique pair \emph{$(\gamma,v)\in \Spin{d} \times \RR^d$}.
\end{lemma}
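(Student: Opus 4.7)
The plan is to prove two inclusions plus uniqueness. The verifications rest on three facts about the algebra $X_d$: each $e_j$ for $j\le d+1$ squares to $-\BI$ and the $e_1,\ldots,e_{d+1}$ pairwise anticommute, while $e_{d+2}^2 = 0$ and $e_{d+2}$ commutes with everything. The nilpotency of $e_{d+2}$ will do most of the heavy lifting, because $(e_{d+1}e_{d+2}i(v))^2$ and other cross terms vanish automatically.

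For the forward inclusion I would fix $\gamma \in \Spin{d}$, $v\in \RR^d$, set $x = \gamma(\BI + e_{d+1}e_{d+2}i(v))$, and verify the three conditions in \eqref{eq:SpunDef}. Membership in $Z_d^0$ is immediate: $\gamma \in C\ell_d^0 \subset Z_d^0$, and $\gamma i(v) \in C\ell_d^1$ contributes an odd number of $e_j$'s ($j\le d$) which, together with the factor $e_{d+1}e_{d+2}$, gives an even count of generators of $Z_d^0$. A short calculation using $e_{d+2}i(v) = i(v)e_{d+2}$ and $i(v)e_{d+1} = -e_{d+1}i(v)$ yields $\overline{\BI + e_{d+1}e_{d+2}i(v)} = \BI - e_{d+1}e_{d+2}i(v)$, so $N(x) = \gamma(\BI - (e_{d+1}e_{d+2}i(v))^2)\overline{\gamma} = \gamma\overline{\gamma} = \BI$ since the squared term contains $e_{d+2}^2 = 0$. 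For the conjugation condition, direct expansion gives $(\BI + e_{d+1}e_{d+2}i(v))(e_{d+2}i(u) + e_{d+1})(\BI - e_{d+1}e_{d+2}i(v)) = e_{d+2}i(u+2v) + e_{d+1}$, where every term quadratic in $e_{d+2}$ dies. Conjugating by $\gamma$ then uses that $\gamma \in C\ell_d^0$ commutes with $e_{d+1}$ (since $e_{d+1}$ anticommutes with each $e_j$, $j\le d$) and that $\gamma \in \Spin{d}$ sends $i(\RR^d)$ to itself, producing $e_{d+2}i(w) + e_{d+1}$ as required.

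For the reverse inclusion I would invoke the decomposition already constructed inside the proof of Theorem \ref{th:SpunGroup}: any $x\in \Spun{d}$ can be written as $x = \gamma_1 + e_{d+1}e_{d+2}\gamma_2$ with $\gamma_1 \in \Spin{d}$ and $\gamma_2 \in C\ell_d^1$. The task reduces to exhibiting a $v\in \RR^d$ with $\gamma_2 = \gamma_1\cdot i(v)$, since then $e_{d+1}e_{d+2}\gamma_2 = \gamma_1 e_{d+1}e_{d+2}i(v)$ (using that $\gamma_1$ commutes with $e_{d+1}$ and with $e_{d+2}$). Two identities from that same proof do the job: $i(w_0) = \gamma_2\overline{\gamma_1} - \gamma_1\overline{\gamma_2}$ and $\gamma_1\overline{\gamma_2} = -\gamma_2\overline{\gamma_1}$, which combine to give $\gamma_2\overline{\gamma_1} = \tfrac{1}{2}i(w_0)$, hence $\gamma_2 = \tfrac{1}{2}i(w_0)\gamma_1$. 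Setting $v = \tfrac{1}{2}\,i^{-1}(\overline{\gamma_1}\, i(w_0)\,\gamma_1)$, which lies in $\RR^d$ because $\overline{\gamma_1} \in \Spin{d}$ acts on $i(\RR^d)$, we obtain $\overline{\gamma_1}\gamma_2 = i(v)$ and therefore $x = \gamma_1(\BI + e_{d+1}e_{d+2}i(v))$.

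Uniqueness comes from linear independence in the standard basis of $X_d$: the summand $\gamma$ sits in the span of basis elements that involve neither $e_{d+1}$ nor $e_{d+2}$, while $e_{d+1}e_{d+2}\gamma i(v)$ sits in the span of basis elements that involve both. Equating two expressions $\gamma(\BI + e_{d+1}e_{d+2}i(v)) = \gamma'(\BI + e_{d+1}e_{d+2}i(v'))$ and comparing these complementary components forces $\gamma = \gamma'$ and $\gamma i(v) = \gamma' i(v')$; since $\gamma \in \Spin{d}$ is invertible, $i(v) = i(v')$ and hence $v = v'$. The main obstacle in the whole argument is the reverse inclusion, but since the algebraic identities it needs have already been established en route to proving that $\Spun{d}$ is a group, this step is essentially a matter of repackaging them to recognize $\gamma_1^{-1}\gamma_2$ as an element of $i(\RR^d)$.
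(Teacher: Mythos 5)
Your proof is correct and takes essentially the same route as the paper's: a direct verification of the two defining conditions for the forward inclusion (using $e_{d+2}^2=0$ to kill cross terms and $\gamma\in\Spin{d}$ commuting with $e_{d+1}, e_{d+2}$), the reverse inclusion via the decomposition $x=\gamma_1+e_{d+1}e_{d+2}\gamma_2$ and the identities $i(w_0)=\gamma_2\overline{\gamma_1}-\gamma_1\overline{\gamma_2}$, $\gamma_1\overline{\gamma_2}=-\gamma_2\overline{\gamma_1}$ already established in Theorem~\ref{th:SpunGroup}, and uniqueness from the linear independence of the $e_{d+1}e_{d+2}$-free and $e_{d+1}e_{d+2}$-containing parts. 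The paper expands the conjugation check slightly differently (splitting $x(e_{d+2}i(u)+e_{d+1})\overline{x}$ into $e_{d+2}(xi(u)\overline{x})+xe_{d+1}\overline{x}$), but the substance is identical.
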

\begin{proof}
For arbitrary $\gamma \in \Spin{d}$ and $v\in \RR^d$, we set $x =  \gamma\left(\BI+e_{d+1}e_{d+2}i(v)\right)$.
Then
\begin{align*}
N(x) =  \gamma\left(\BI+e_{d+1}e_{d+2}i(v)\right) (\overline{\BI+e_{d+1}e_{d+2}i(v)}) \overline{\gamma} &= \gamma\left(\BI+e_{d+1}e_{d+2}i(v)\right) \left(\BI-e_{d+1}e_{d+2}i(v)\right)\overline{\gamma} \\
&=\gamma(\BI - e_{d+1}e_{d+2}i(v)+e_{d+1}e_{d+2}i(v))\overline{\gamma} =\gamma \overline{\gamma}=\BI.
\end{align*}

Since $\gamma \in \Spin{d}$, there exists $w_1\in \RR^d$ such that $\gamma i(v)\overline{\gamma} = i(w_1)$.
For every $u\in \RR^d$ there exists $w_2\in \RR^d$ such that $\gamma i(u)\overline{\gamma} = i(w_2)$, so
\begin{align*}
x(&e_{d+2}i(u)+e_{d+1})\overline{x} = e_{d+2}(xi(u)\overline{x}) + x e_{d+1} \overline{x} \\
&=  e_{d+2}\gamma\left(\BI+e_{d+1}e_{d+2}i(v)\right) i(u)\left(\BI-e_{d+1}e_{d+2}i(v)\right)\overline{\gamma} + \gamma\left(e_{d+1} + e_{d+2}i(v)\right)\left(\BI-e_{d+1}e_{d+2}i(v)\right)\overline{\gamma} \\
&= e_{d+2}\gamma i(u)\overline{\gamma} +\gamma(e_{d+1}+2e_{d+2}i(v))\overline{\gamma} = e_{d+2} i(w_2+2w_1) +e_{d+1} \in \left(e_{d+2} i(\RR^d)+e_{d+1}\right).
\end{align*}
We conclude that  $\{ \gamma \left(\BI+e_{d+1}e_{d+2}i(v)\right)  \, :\, \gamma \in \Spin{d}, v\in \RR^d\} \subseteq \Spun{d}$.

For the other direction, consider an element $x \in \Spun{d}$, and recall from the proof of Theorem \ref{th:SpunGroup} that $x = \gamma_1 + e_{d+1}e_{d+2}\gamma_2$ for some $\gamma_1 \in \Spin{d}$ and $\gamma_2\in C\ell_{d}^1$.
By definition, there exists $w_0\in \RR^d$ such that $xe_{d+1}\overline{x} = x(e_{d+2}i(0) + e_{d+1}\overline{x} = e_{d+2}i(w_0)+e_{d+1}$.
In the proof of Theorem \ref{th:SpunGroup} it is also shown that $i(w_0) = \gamma_2\overline{\gamma_1} -\gamma_1\overline{\gamma_2}$ and that $\gamma_1\overline{\gamma_2} = -\gamma_2\overline{\gamma_1}$.
Together these imply $\gamma_2\overline{\gamma_1} = i(w_0)/2$.
Since $\gamma_1\in \Spin{d}$, it has the inverse $\overline{\gamma_1}$.
Thus, there exists $w_1\in \RR^d$ such that
\[ \gamma_2 = (\gamma_2 \overline{\gamma_1}) \gamma_1=  i(w_0)\gamma_1/2 = \gamma_1\overline{\gamma_1}i(w_0)\gamma_1/2 = \gamma_1i(w_1)/2. \]

We conclude that $x = \gamma_1(\BI + e_{d+1}e_{d+2}i(w_1)/2)$ where $\gamma_1\in \Spin{d}$.
That is, $\Spun{d} \subseteq \{ \gamma \left(\BI+e_{d+1}e_{d+2}i(v)\right) \, :\, \gamma \in \Spin{d}, v\in \RR^d\}$.

Note that $\gamma$ is uniquely determined by $x$, since it is exactly the part of $x$ that does not involve $e_{d+2}$.
Once $\gamma$ is fixed, there is a unique $v\in \RR^d$ that satisfies $\gamma e_{d+1}e_{d+2}i(v) = e_{d+1}e_{d+2}\gamma_2$.
That is, the pair $(\gamma,v)$ is uniquely determined.
\end{proof}

Recall that every transformation of $\SE{d}$ can be seen as a translation followed by a rotation, which is a pair in $\SO{d}\times \RR^d$.
Lemma \ref{le:SpunDesc} states that every element of $\Spun{d}$ corresponds to a unique pair of $\Spin{d}\times \RR^d$.
Since $\Spin{d}$ is a double cover of $\SO{d}$, we are starting to see why $\Spun{d}$ is a double cover of $\SE{d}$.
The following result proves this property, and provides a variant of the homomorphism $\rho: \Spin{d} \to \SO{d}$ defined above.

\begin{theorem} \label{th:SpunToSE}
For every $d$ there exists a surjective group homomorphism \emph{$\rho: \Spun{d} \to \SE{d}$} with $\ker(\rho) = \{-1,1\}$.
That is, \emph{$\Spun{d}$} is a double cover of \emph{$\SE{d}$}.
\end{theorem}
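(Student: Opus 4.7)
My plan is to define $\rho$ explicitly through the action on $(e_{d+2}i(v)+e_{d+1})$ already built into the definition \eqref{eq:SpunDef}. For $x\in\Spun{d}$ and $v\in\RR^d$, the definition asserts the existence of $w\in\RR^d$ with $x(e_{d+2}i(v)+e_{d+1})\overline{x}=e_{d+2}i(w)+e_{d+1}$, and $w$ is unique because the coefficients of the basis monomials $e_{d+2}e_j$ in the standard basis of $X_d$ are independent. Set $\rho(x)(v)=w$.

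To see that $\rho(x)\in\SE{d}$, I would use Lemma \ref{le:SpunDesc} to write $x=\gamma(\BI+e_{d+1}e_{d+2}i(u))$ with $\gamma\in\Spin{d}$ and $u\in\RR^d$, let $R\in\SO{d}$ denote the image of $\gamma$ under the standard double cover $\Spin{d}\to\SO{d}$, and compute directly. The computation mirrors the one at the end of the proof of Lemma \ref{le:SpunDesc}: since $e_{d+2}^2=0$ kills cross terms, $e_{d+2}$ commutes with everything, and $e_{d+1}$ anti-commutes with $i(\RR^d)$ but commutes with $\gamma,\overline{\gamma}\in C\ell_d^0$, one finds
\[ x\,e_{d+2}i(v)\,\overline{x}=e_{d+2}\,\gamma\,i(v)\,\overline{\gamma}=e_{d+2}\,i(R(v)), \qquad x\,e_{d+1}\,\overline{x}=e_{d+1}+2e_{d+2}\,i(R(u)), \]
so $\rho(x)(v)=R(v)+2R(u)$. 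This is a rotation followed by a translation, hence lies in $\SE{d}$.

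The homomorphism property is immediate from associativity: applying the defining equation twice to $(xy)(e_{d+2}i(v)+e_{d+1})\overline{xy}=x(y(e_{d+2}i(v)+e_{d+1})\overline{y})\overline{x}$ yields $\rho(xy)(v)=\rho(x)(\rho(y)(v))$. For surjectivity, given $v\mapsto R(v)+t$ in $\SE{d}$, pick any preimage $\gamma\in\Spin{d}$ of $R$ and set $u=R^{-1}(t)/2$; then $x=\gamma(\BI+e_{d+1}e_{d+2}i(u))$ belongs to $\Spun{d}$ by Lemma \ref{le:SpunDesc} and satisfies $\rho(x)(v)=R(v)+t$. For the kernel, $\rho(x)=\id$ forces $R(v)+2R(u)=v$ for all $v$; taking $v=0$ gives $u=0$, and then $R=\id$, so $\gamma\in\{\BI,-\BI\}$ (the kernel of the Spin cover) and thus $x=\pm\BI$. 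Both $\pm\BI$ obviously lie in $\Spun{d}$ and act trivially, so the kernel is exactly $\{-\BI,\BI\}$.

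The only substantive step is the formula $\rho(x)(v)=R(v)+2R(u)$, and even this is mostly already done inside the proof of Lemma \ref{le:SpunDesc}; no step is a serious obstacle. The minor bookkeeping challenge is keeping the sign and commutation conventions straight (that $\gamma$ commutes with both $e_{d+1}$ and $e_{d+2}$ while $i(v)$ anti-commutes with $e_{d+1}$ and commutes with $e_{d+2}$), but this is routine once the structural relations in the ideal defining $X_d$ are parsed.
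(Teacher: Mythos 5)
Your proof is correct, and it takes a genuinely cleaner route than the paper's. The paper introduces an auxiliary fixed point $a\in\RR^d$, writes every $x\in\Spun{d}$ as $x=\gamma_x+\frac{1}{2}e_{d+1}e_{d+2}(p_x\gamma_x-\gamma_x i(a))$, and then defines $\rho(x)=p_x^+\circ\Gamma_x\circ(-a)^+$; verifying the homomorphism property then requires a fairly long explicit composition computation. You instead define $\rho$ directly via the conjugation action on $e_{d+2}i(v)+e_{d+1}$ that is already built into \eqref{eq:SpunDef} (which is precisely the content the paper only later proves equivalent to its $\rho$, in Lemma~\ref{le:RhoDef}). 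This intrinsic definition buys you the homomorphism property essentially for free from associativity of multiplication in $X_d$, so the substantive work reduces to the single local computation $\rho(x)(v)=R(v)+2R(u)$. Surjectivity and the kernel computation are then short and explicit, just as in the paper. The one thing worth spelling out a touch more carefully (you do gesture at it) is the well-definedness of $\rho$: existence of $w$ is exactly the membership condition in \eqref{eq:SpunDef}, uniqueness follows because the elements $e_1 e_{d+2},\dots,e_d e_{d+2}$ are linearly independent in $X_d$, and the fact that the defining condition uses $\overline{x}$ rather than $x^{-1}$ is harmless because Theorem~\ref{th:SpunGroup} (proved earlier) gives $\overline{x}=x^{-1}$. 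With that noted, your argument is complete and, if anything, more transparent than the paper's, since it avoids the arbitrary choice of the base point $a$.
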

\begin{proof}
Let $a \in \RR^d$ be a fixed point.
By Lemma \ref{le:SpunDesc}, any element $x\in \Spun{d}$ can be written as $\gamma_x\left(\BI + e_{d+1}e_{d+2}i(v_x)\right)$ where $\gamma_x \in \Spin{d}$ and $v_x\in \RR^d$ are uniquely determined.
We set $p_x = \gamma_x i(a+2v_x)\overline{\gamma_x}$ and note that $p_x \in i(\RR^d$).
We have
\begin{equation} \label{eq:xToRho}
\gamma_x + \frac{1}{2}e_{d+1}e_{d+2}\left(p_x\gamma_x - \gamma_x i(a)\right) = \gamma_x + \frac{1}{2}e_{d+1}e_{d+2}\cdot 2 \gamma_xi(v_x) =  \gamma_x (\BI + e_{d+1}e_{d+2}i(v_x)) = x.
\end{equation}

Thus, for any $x\in \Spun{d}$ there exist unique $p_x\in i(\RR^d)$ and $\gamma_x \in \Spin{d}$ such that $x = \gamma_x + \frac{1}{2}e_{d+1}e_{d+2}\left(p_x\gamma_x - \gamma_x i(a)\right)$.
We now rely on this observation to define the map $\rho: \Spun{d} \to \SE{d}$.
For any $v\in \RR^d$, denote the translation of $\RR^d$ by $v$ as $v^+\in \SE{d}$.
Similarly, for any $p\in i(\RR^d)$, we set $p^+=(i^{-1}(p))^+\in \SE{d}$.
As stated in Section \ref{sec:Perlim}, there is a unique $\Gamma_x\in \SO{d}$ that corresponds to $\gamma_x$.
We set
\[ \rho(x) = p_x^+\circ \Gamma_x \circ (-a)^+. \]
Note that $p_x$ and $\Gamma_x$ are uniquely determined by $x$.
Recalling that $a$ is fixed, we conclude that the map $\rho(\cdot)$ is well-defined.

For every $p \in i(\RR^d)$ and $\gamma \in \Spin{d}$, by setting $i(u) = \frac{1}{2} \left(\overline{\gamma}p\gamma -  i(a) \right)\in i(\RR^d)$ we get
\[ \gamma \left(\BI + e_{d+1}e_{d+2}i(u)\right) = \gamma + \frac{1}{2}e_{d+1}e_{d+2}\left(p\gamma - \gamma i(a)\right). \]

Combining this with \eqref{eq:xToRho} and with Lemma \ref{le:SpunDesc} implies that for every $p \in i(\RR^d)$ and $\gamma \in \Spin{d}$ there exists $x\in \Spun{d}$ such that $p=p_x$ and $\gamma=\gamma_x$.
Every transformation $M\in \SE{d}$ can be written as $(M(a))^+ \circ R \circ (-a)^+$ for some transformation $R\in \SO{d}$.
Indeed, note that for any $R\in \SO{d}$ the map $(M(a))^+ \circ R \circ (-a)^+$ takes $a$ to $M(a)$, so we just need to choose the $R$ that rotates the space properly around $a$.
We conclude that $\rho$ is surjective.

For $x,y\in \Spun{d}$, we now consider how the product $xy\in \Spun{d}$ behaves.
Since $\overline{\gamma_y}\in \Spin{d}$, there exists $v_z\in \RR^d$ such that $i(v_z) = \overline{\gamma_y} i(v_x)\gamma_y$.
Then
\begin{align*}
xy &= \gamma_x\left(\BI + e_{d+1}e_{d+2}i(v_x)\right) \gamma_y\left(\BI + e_{d+1}e_{d+2}i(v_y)\right) \\
&= \gamma_x\left(\gamma_y + e_{d+1}e_{d+2}i(v_x)\gamma_y\right) \left(\BI + e_{d+1}e_{d+2}i(v_y)\right) \\
&= \gamma_x\gamma_{y}\left(\BI + e_{d+1}e_{d+2}i(v_z)\right) \left(\BI + e_{d+1}e_{d+2} i(v_y)\right) = \gamma_x\gamma_{y}\left(\BI + e_{d+1}e_{d+2}i(v_z+v_y)\right).
\end{align*}

This implies that $\gamma_{xy} = \gamma_x\gamma_y$ and that $v_{xy} = v_z+v_y$.
This in turn implies that $p_{xy} = \gamma_{xy}i(a + 2v_z+2v_y)\overline{\gamma_{xy}}$.
We are now ready to verify that $\rho$ is a group homomorphism.
Note that the action of $\gamma_{xy}$ is first performing the action of $\gamma_y$ and then the action of $\gamma_x$.
That is, $\Gamma_{xy} = \Gamma_x  \circ \Gamma_y$.
For the same reason we have
\begin{align*}
\rho(x)\rho(y) &=  p_x^+\circ \Gamma_x \circ (-a)^+  \circ p_y^+\circ \Gamma_y \circ (-a)^+= p_x^+\circ \Gamma_x \circ (p_y-i(a))^+ \circ \Gamma_y \circ (-a)^+ \\
&= p_x^+ \circ (\gamma_x(p_y-i(a))\overline{\gamma_x})^+ \circ \Gamma_x  \circ \Gamma_y \circ (-a)^+ \\
&=  (p_x +\gamma_x(p_y-i(a))\overline{\gamma_x})^+ \circ \Gamma_{xy} \circ (-a)^+ \\
&= \left(\gamma_x\left(i(a+2v_x)+\gamma_y i(a+2v_y)\overline{\gamma(y)}-i(a)\right)\overline{\gamma_x}\right)^+ \circ \Gamma_{xy} \circ (-a)^+ \\
&= \left(\gamma_x \gamma_y(2\overline{\gamma_y}i(v_x)\gamma_y+ i(a+2v_y))\overline{\gamma_x\gamma_y}\right)^+ \circ \Gamma_{xy} \circ (-a)^+ \\
&= (\gamma_{xy}i(a +2v_{xy})\overline{\gamma_{xy}})^+ \circ \Gamma_{xy} \circ (-a)^+ = p_{xy}^+ \circ \Gamma_{xy} \circ (-a)^+ = \rho(xy).
\end{align*}

It remains to find the kernel of the homomorphism $\rho$.
Let $I$ be identity element of $\SE{d}$ and let $x\in \Spun{d}$ satisfy $\rho(x)=p_x^+\circ \Gamma_x \circ (-a)^+=I$.
That is, $p_x^+\circ \Gamma_x = a^+$.
The composition of a rotation and a translation cannot be a translation, so $\Gamma_x$ is the identity of $\SO{d}$ and $p_x = i(a)$.
This implies that $\gamma_x \in \{-\BI,\BI\} \subset \Spin{d}$.
Combining the above with \eqref{eq:xToRho} gives
\[ \ker(\rho) = \rho^{-1}(I) = \left\{-\BI + \frac{1}{2}e_{d+1}e_{d+2}\left(-i(a) + i(a)\right), \BI + \frac{1}{2}e_{d+1}e_{d+2}\left(i(a) - i(a)\right)\right\} = \{-\BI,\BI\}. \]
\end{proof}

Let $\tau: \{e_{d+2}i(v) + e_{d+1} :\, v\in \RR^n\} \to \RR^n$ be the map defined by $\tau(e_{d+2}i(v) + e_{d+1}) = v$.
The following lemma studies the behaviour of the homomorphism $\rho$ from Theorem \ref{th:SpunToSE}.

\begin{lemma} \label{le:RhoDef}
For every $w\in \RR^d$ and $x\in \Spun{d}$,
\[ \rho(x)(w) = \tau(x(e_{d+2}i(w)+e_{d+1})\overline{x}). \]
\end{lemma}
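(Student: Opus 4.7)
The plan is a direct computation, comparing the right-hand side $\tau(x(e_{d+2}i(w)+e_{d+1})\overline{x})$ with the explicit description of $\rho(x)$ from Theorem \ref{th:SpunToSE}. By Lemma \ref{le:SpunDesc} I can write $x = \gamma_x(\BI + e_{d+1}e_{d+2}i(v_x))$ with $\gamma_x \in \Spin{d}$, $v_x \in \RR^d$, and correspondingly $\overline{x} = (\BI - e_{d+1}e_{d+2}i(v_x))\overline{\gamma_x}$. Then I would handle the expression $x(e_{d+2}i(w)+e_{d+1})\overline{x}$ by splitting it into two pieces: $x e_{d+2}i(w)\overline{x}$ and $x e_{d+1}\overline{x}$.

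For $x e_{d+2}i(w)\overline{x}$, the key observations I would use are that $e_{d+2}^2 = 0$ and that $e_{d+2}$ commutes with $e_1,\ldots,e_{d+1}$ (from the relations defining $X_d$). Both cross terms that arise carry an extra factor of $e_{d+2}$ and therefore vanish, so this piece collapses to $\gamma_x e_{d+2}i(w)\overline{\gamma_x} = e_{d+2}\gamma_x i(w)\overline{\gamma_x}$. For $x e_{d+1}\overline{x}$ I would use that $e_{d+1}$ anticommutes with $e_1,\ldots,e_d$ (so it commutes with $\gamma_x \in C\ell_d^0$) and that $e_{d+1}^2 = -\BI$; the quadratic term again vanishes because it carries $e_{d+2}^2$, and the two linear cross terms turn out to be equal, yielding $\gamma_x e_{d+1}\overline{\gamma_x} + 2e_{d+2}\gamma_x i(v_x)\overline{\gamma_x} = e_{d+1} + 2e_{d+2}\gamma_x i(v_x)\overline{\gamma_x}$ (using $\gamma_x e_{d+1}\overline{\gamma_x} = e_{d+1}$). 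Adding the two pieces gives
\[ x(e_{d+2}i(w)+e_{d+1})\overline{x} = e_{d+2}\gamma_x i(w+2v_x)\overline{\gamma_x} + e_{d+1}, \]
so applying $\tau$ yields $i^{-1}(\gamma_x i(w+2v_x)\overline{\gamma_x}) = \Gamma_x(w+2v_x)$, where $\Gamma_x \in \SO{d}$ is the rotation corresponding to $\gamma_x$.

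On the other side, Theorem \ref{th:SpunToSE} gives $\rho(x) = p_x^+ \circ \Gamma_x \circ (-a)^+$ with $p_x = \gamma_x i(a+2v_x)\overline{\gamma_x}$, so
\[ \rho(x)(w) = \Gamma_x(w-a) + i^{-1}(p_x) = \Gamma_x(w-a) + \Gamma_x(a+2v_x) = \Gamma_x(w+2v_x), \]
which matches. The main obstacle is not conceptual but bookkeeping: I must be careful with sign conventions when moving $i(v_x)$ past $e_{d+1}$ (they anticommute) versus past $e_{d+2}$ (they commute), and systematically invoke $e_{d+2}^2 = 0$ to kill the quadratic-in-$v_x$ cross terms. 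Once these computations are carried out, the desired equality $\rho(x)(w) = \tau(x(e_{d+2}i(w)+e_{d+1})\overline{x})$ follows directly, and as a bonus the identity is independent of the fixed base point $a$ used to define $\rho$.
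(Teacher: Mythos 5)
Your proposal is correct and follows essentially the same path as the paper's proof: both decompose $x = \gamma_x(\BI + e_{d+1}e_{d+2}i(v_x))$ via Lemma \ref{le:SpunDesc} and expand $x(e_{d+2}i(w)+e_{d+1})\overline{x}$ using the relations $e_{d+2}^2=0$, $e_{d+1}^2=-\BI$, and the (anti)commutation rules, arriving at the same intermediate expression $e_{d+2}\gamma_x i(w+2v_x)\overline{\gamma_x}+e_{d+1}$. The only cosmetic difference is that you finish by simplifying both sides to $\Gamma_x(w+2v_x)$, whereas the paper rewrites $2i(v_x)$ in terms of $p_x$ and $i(a)$ so as to read off the composition $p_x^+\circ\Gamma_x\circ(-a)^+$ directly from the expression.
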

\begin{proof}
By Lemma \ref{le:SpunDesc}, every $x\in \Spun{d}$ can be written as $\gamma_x\left(\BI + e_{d+1}e_{d+2}i(v_x)\right)$ for some $\gamma_x \in \Spin{d}$ and $v_x\in \RR^d$.
Recalling that $p_x = \gamma_xi(a + 2v_x)\overline{\gamma_x}$, we have
\begin{align*}
x(e_{d+2}i(w)+e_{d+1})\overline{x} &= \gamma_x\left(\BI + e_{d+1}e_{d+2}i(v_x)\right) (e_{d+2}i(w)+e_{d+1}) \left(\BI - e_{d+1}e_{d+2}i(v_x)\right) \overline{\gamma_x} \\
&= \gamma_x e_{d+2}i(w)\overline{\gamma_x} + \gamma_x\left(\BI + e_{d+1}e_{d+2}i(v_x)\right) e_{d+1} \left(\BI - e_{d+1}e_{d+2}i(v_x)\right) \overline{\gamma_x} \\
&= e_{d+2} \gamma_x i(w)\overline{\gamma_x} + \gamma_x\left(e_{d+1} + e_{d+1}e_{d+2}i(v_x)e_{d+1} - e_{d+1}e_{d+1}e_{d+2}i(v_x)\right) \overline{\gamma_x} \\
&= e_{d+2} \gamma_x (i(w) +2i(v_x))\overline{\gamma_x} + \gamma_x e_{d+1}\overline{\gamma_x} \\
&= e_{d+2} \big(p_x+ \gamma_x (i(w) -i(a))\overline{\gamma_x}\big) + e_{d+1}.
\end{align*}

That is, the operation of $\tau(x(e_{d+2}i(w)+e_{d+1})\overline{x})$ can be seen as first translating $w$ by $-a$, then performing the rotation of $\gamma_x\in \Spin{d}$, and finally translating by $p_x$.
This is exactly the operation $\rho(x) = p_x^+\circ \Gamma_x \circ (-a)^+$.
\end{proof}

For $w\in \RR^d$ and $x\in \Spun{d}$, we write $w^x = \rho(x)(w) = \tau(x(e_{d+2}i(w)+e_{d+1})\overline{x})$.

\subsection{The sets $T_{ap}$}

Given points $a,p\in \RR^d$, we define
\begin{equation} \label{eq:TapDef}
T_{ap} = \left\{x \in \Spun{d} :\ a^x =p \right\}.
\end{equation}
That is, $T_{ap}$ is the set of elements of $\Spun{d}$ that correspond to a proper rigid motion of $\RR^d$ that takes $a$ to $p$.
In this section we study the structure of $T_{ap}$.
We begin by presenting a relatively simple description of this set.

\begin{lemma} \label{le:TapBasicRd}
For any $a,p\in \RR^d$, we have
\[ T_{ap} = \left\{\gamma + \frac{1}{2}e_{d+1}e_{d+2}\left(i(p)\gamma - \gamma i(a)\right) :\, \gamma \in \Spin{d} \right\}. \]
\end{lemma}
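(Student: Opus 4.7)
The plan is to parameterize $T_{ap}$ by $\Spin{d}$ using the bijection provided by Lemma \ref{le:SpunDesc}. Every $x\in \Spun{d}$ has a unique representation $x=\gamma(\BI+e_{d+1}e_{d+2}i(v))$ with $\gamma\in \Spin{d}$ and $v\in \RR^d$. The condition $x\in T_{ap}$ is $a^x=p$, which by Lemma \ref{le:RhoDef} translates to $x(e_{d+2}i(a)+e_{d+1})\overline{x}=e_{d+2}i(p)+e_{d+1}$. The goal is to show that, for each fixed $\gamma\in\Spin{d}$, this condition forces a unique $v\in\RR^d$, and then to rewrite the resulting $x$ in the form claimed.

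First I would compute $x(e_{d+2}i(a)+e_{d+1})\overline{x}$ explicitly. This is essentially the calculation already carried out in the proof of Lemma \ref{le:SpunDesc} with $u=a$: using $\overline{x}=(\BI - e_{d+1}e_{d+2}i(v))\overline{\gamma}$, the relations $e_{d+2}^2=0$, the fact that $e_{d+1}$ and $e_{d+2}$ commute, that $e_{d+2}$ commutes with $i(v)$ while $e_{d+1}$ anticommutes with $i(v)$, and that $\gamma\in C\ell_d^0$ commutes with $e_{d+1}$, $e_{d+2}$, and hence $e_{d+1}e_{d+2}$, one obtains
\[ x(e_{d+2}i(a)+e_{d+1})\overline{x} = e_{d+1} + e_{d+2}\gamma\bigl(i(a)+2i(v)\bigr)\overline{\gamma}. \]
Setting this equal to $e_{d+1}+e_{d+2}i(p)$ and applying $\tau$ yields $\gamma(i(a)+2i(v))\overline{\gamma}=i(p)$, equivalently
\[ i(v)=\tfrac{1}{2}\bigl(\overline{\gamma}\,i(p)\,\gamma - i(a)\bigr). \]
Since $\gamma\in\Spin{d}$ we have $\overline{\gamma}\,i(p)\,\gamma\in i(\RR^d)$, so this equation has a unique solution $v\in\RR^d$. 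Thus the map $\gamma\mapsto x$ from $\Spin{d}$ onto $T_{ap}$ is a bijection.

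Finally I would substitute this $v$ back into $x=\gamma(\BI+e_{d+1}e_{d+2}i(v))$ and simplify. Using again that $\gamma$ commutes with $e_{d+1}e_{d+2}$ and that $\gamma\overline{\gamma}=\BI$,
\[ x = \gamma + \tfrac{1}{2}\gamma e_{d+1}e_{d+2}\bigl(\overline{\gamma}\,i(p)\,\gamma - i(a)\bigr) = \gamma + \tfrac{1}{2}e_{d+1}e_{d+2}\bigl(i(p)\gamma - \gamma\, i(a)\bigr), \]
which is exactly the formula in the statement. Conversely, the previous paragraph shows that any element of this form lies in $\Spun{d}$ and belongs to $T_{ap}$, completing both inclusions.

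The only mildly delicate point is bookkeeping the commutation relations in $X_d$: that $e_{d+2}$ commutes with each $e_j$ for $j\le d+1$ while $e_{d+1}$ anticommutes with each $e_j$ for $j\le d$, so that an even-degree $\gamma\in C\ell_d^0$ commutes individually with both $e_{d+1}$ and $e_{d+2}$. Once these are accepted, the proof is a direct reduction to Lemmas \ref{le:SpunDesc} and \ref{le:RhoDef}, with no genuine obstacle.
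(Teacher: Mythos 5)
Your proof is correct and follows essentially the same approach as the paper: both rely on Lemma~\ref{le:SpunDesc} to parameterize $\Spun{d}$ as $\{\gamma(\BI+e_{d+1}e_{d+2}i(v))\}$, translate the condition $a^x=p$ into $i(v)=\tfrac{1}{2}(\overline{\gamma}\,i(p)\,\gamma - i(a))$, and then simplify using the commutation relations. The only minor difference is that the paper re-verifies membership $x\in\Spun{d}$ directly (computing $N(x)=\BI$ and checking the group-action condition) for the $\supseteq$ inclusion, whereas you invoke Lemma~\ref{le:SpunDesc} for both directions, which is slightly more economical but not a genuinely different route.
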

\begin{proof}
Let $x\in \left\{\gamma + \frac{1}{2}e_{d+1}e_{d+2}\left(i(p)\gamma - \gamma i(a)\right) :\, \gamma \in \Spin{d} \right\}$.
That is, there exists $\gamma_x \in \Spin{d}$ such that $x = \gamma_x + \frac{1}{2}e_{d+1}e_{d+2}\left(i(p)\gamma_x - \gamma_x i(a)\right)$.
We get that
\begin{align*}
N(x) &= x\overline{x} = \left(\gamma_x + \frac{1}{2}e_{d+1}e_{d+2}\left(i(p)\gamma_x - \gamma_x i(a)\right)\right)\left(\overline{\gamma_x} + \frac{1}{2}e_{d+1}e_{d+2}\left( i(a)\overline{\gamma_x} - \overline{\gamma_x} i(p)\right)\right) \\
&= \gamma_x \overline{\gamma_x} + \frac{1}{2}e_{d+1}e_{d+2}\left(\gamma_x i(a)\overline{\gamma_x} -  i(p) + i(p) - \gamma_x i(a)\overline{\gamma_x}\right) =\BI.
\end{align*}

For every $u\in \RR^d$ we have
\begin{align}
x (&e_{d+2}i(u) + e_{d+1}) \overline{x} \nonumber \\
&= \left(\gamma_x + \frac{1}{2}e_{d+1}e_{d+2}\left(i(p)\gamma_x - \gamma_x i(a)\right)\right) (e_{d+2}i(u) + e_{d+1}) \left(\overline{\gamma_x} + \frac{1}{2}e_{d+1}e_{d+2}\left( i(a)\overline{\gamma_x} - \overline{\gamma_x} i(p)\right)\right) \nonumber \\
&=\gamma_x (e_{d+2}i(u) + e_{d+1}) \overline{\gamma_x} +  \frac{1}{2}e_{d+2} \big(e_{d+1}\left(i(p)\gamma_x - \gamma_x i(a)\right)e_{d+1} \overline{\gamma_x} + \gamma_x e_{d+1} e_{d+1}\left( i(a)\overline{\gamma_x} - \overline{\gamma_x} i(p)\right)\big) \nonumber \\
&= e_{d+2} \gamma_xi(v)\overline{\gamma_x} + e_{d+1} + \frac{1}{2}e_{d+2} \big(\left(i(p) - \gamma_x i(a)\overline{\gamma_x}\right) -  \left( \gamma_xi(a)\overline{\gamma_x} -  i(p)\right)\big)  \nonumber \\
&= e_{d+2} \left(\gamma_xi(v-a)\overline{\gamma_x} +i(p)\right) + e_{d+1} \in \left(e_{d+2} i(\RR^d)+e_{d+1}\right). \label{eq:SpunActionRd}
\end{align}

By combining the above, we get that $x\in \Spun{d}$.
From \eqref{eq:SpunActionRd} we obtain
\begin{align*}
x (e_{d+2}i(a)+e_{d+1}) \overline{x} = e_{d+2} \left(\gamma_xi(a-a)\overline{\gamma_x} +i(p)\right) + e_{d+1} = e_{d+2}i(p)+e_{d+1}.
\end{align*}
Since the action of $x$ takes $a$ to $p$, we have that $x\in T_{ap}$.
This in turn implies
\[ \left\{\gamma + \frac{1}{2}e_{d+1}e_{d+2}\left(i(p)\gamma - \gamma i(a)\right) :\, \gamma \in \Spin{d} \right\}\subseteq T_{ap}. \]

For the other direction, consider $y\in T_{ap} \subset \Spun{d}$.
By Lemma \ref{le:SpunDesc}, there exist $\gamma_y \in \Spin{d}$ and $v_y\in \RR^d$ such that $y= \gamma_y\left(\BI+e_{d+1}e_{d+2}i(v_y)\right)$.
We also know that
\begin{align*}
e_{d+2}i(p) + e_{d+1} &= y(e_{d+2}i(a)+e_{d+1})\overline{y} \\
&= \gamma_y\left(\BI+e_{d+1}e_{d+2}i(v_y)\right) (e_{d+2}i(a)+e_{d+1}) \left(\BI-e_{d+1}e_{d+2}i(v_y)\right)\overline{\gamma_y} \\
&= \gamma_y\left(e_{d+2}i(a)+e_{d+1}+e_{d+1}e_{d+2}i(v_y)e_{d+1}-e_{d+1}e_{d+1}e_{d+2}i(v_y)\right)\overline{\gamma_y} \\
&= e_{d+2}\gamma_y\left(i(a)+2i(v_y)\right)\overline{\gamma_y}+e_{d+1}.
\end{align*}

The above calculation implies that $i(p) = \gamma_y\left(i(a)+2i(v_y)\right)\overline{\gamma_y}$.
After rearranging we get $i(v_y) = (\overline{\gamma_y} i(p) \gamma_y - i(a))/2$.
We thus have
\begin{align*}
y = \gamma_y\left(\BI+e_{d+1}e_{d+2}i(v_y)\right) &= \gamma_y\left(\BI+\frac{1}{2}e_{d+1}e_{d+2}(\overline{\gamma_y} i(p) \gamma_y- i(a))\right) \\
&=  \gamma_y+\frac{1}{2}e_{d+1}e_{d+2}( i(p) \gamma_y - \gamma_y i(a)).
\end{align*}

We conclude that $T_{ap} \subseteq \left\{\gamma + \frac{1}{2}e_{d+1}e_{d+2}\left(i(p)\gamma - \gamma i(a)\right) :\, \gamma \in \Spin{d} \right\}$, which in turn implies that the two sets are identical.
\end{proof}

The following lemma provides a more geometric representation of the sets $T_{ap}$: the intersection of $\Spun{d}$ with the linear subspace.
Let
\begin{equation} \label{eq:FapDefD}
F_{ap} = \left(\BI+\frac{1}{2}e_{d+1}e_{d+2}i(p)\right)C\ell_d^0\left(\BI-\frac{1}{2}e_{d+1}e_{d+2}i(a)\right).
\end{equation}

\begin{lemma} \label{le:TapStructureRd}
For $a,p\in \RR^d$, we have $T_{ap} = F_{ap} \cap \Spun{d}$.
\end{lemma}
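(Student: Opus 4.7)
The plan is to first rewrite $F_{ap}$ in a form directly comparable to the description of $T_{ap}$ given by Lemma \ref{le:TapBasicRd}. The key computational facts I would use are: (a) every $\gamma \in C\ell_d^0$ commutes with $e_{d+1}e_{d+2}$, since $\gamma$ is a polynomial in pairs $e_ie_j$ with $i,j\le d$, each such pair commutes with $e_{d+1}$ by a parity argument (two anticommutations), and commutes with $e_{d+2}$ by the defining relation $e_{d+2}e_j = e_je_{d+2}$; and (b) $(e_{d+1}e_{d+2})^2 = e_{d+1}^2 e_{d+2}^2 = 0$, because $e_{d+2}^2 = 0$ in $X_d$. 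Using (a) and (b), expanding the product defining $F_{ap}$ kills the quadratic $e_{d+1}e_{d+2}$ cross term, and I obtain
\[ \left(\BI+\tfrac{1}{2}e_{d+1}e_{d+2}i(p)\right)\gamma\left(\BI-\tfrac{1}{2}e_{d+1}e_{d+2}i(a)\right) \;=\; \gamma + \tfrac{1}{2}e_{d+1}e_{d+2}\bigl(i(p)\gamma - \gamma i(a)\bigr). \]
Hence $F_{ap}$ equals the set of all elements of this shape with $\gamma$ ranging over $C\ell_d^0$.

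For the inclusion $T_{ap} \subseteq F_{ap} \cap \Spun{d}$, note that Lemma \ref{le:TapBasicRd} describes $T_{ap}$ by exactly the same formula, but with $\gamma$ restricted to $\Spin{d} \subset C\ell_d^0$. Since $T_{ap} \subseteq \Spun{d}$ by definition, this inclusion is immediate from the reformulation of $F_{ap}$ above.

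For the reverse inclusion $F_{ap} \cap \Spun{d} \subseteq T_{ap}$, I would take $x \in F_{ap} \cap \Spun{d}$ written as $x = \gamma + \tfrac{1}{2}e_{d+1}e_{d+2}(i(p)\gamma - \gamma i(a))$ for some $\gamma \in C\ell_d^0$, and show $\gamma \in \Spin{d}$. By Lemma \ref{le:SpunDesc}, $x$ also admits a unique representation $x = \gamma'(\BI + e_{d+1}e_{d+2}i(v))$ with $\gamma' \in \Spin{d}$ and $v \in \RR^d$; using fact (a) again this equals $\gamma' + e_{d+1}e_{d+2}\gamma' i(v)$. I then compare the two expressions in the standard basis of $X_d$: in both, the summand that contains no occurrence of $e_{d+1}$ or $e_{d+2}$ is precisely $\gamma$ (respectively $\gamma'$), while the remaining summand is divisible by $e_{d+1}e_{d+2}$. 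By uniqueness of the basis expansion, $\gamma = \gamma' \in \Spin{d}$, so $x \in T_{ap}$ by Lemma \ref{le:TapBasicRd}.

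The main obstacle is really just bookkeeping with the relations in $X_d$. The substantive points are the commutation/nilpotency identities for $e_{d+1}e_{d+2}$ that collapse $F_{ap}$ to a one-sided linear form, and the support-based comparison in the basis of $X_d$ which forces the $C\ell_d^0$-coefficient of an element of $\Spun{d}$ to actually lie in $\Spin{d}$. Once these are in place, the two inclusions are essentially formal.
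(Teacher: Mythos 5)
Your proof is correct and follows essentially the same approach as the paper: both establish $T_{ap}\subseteq F_{ap}\cap\Spun{d}$ directly from the formula of Lemma \ref{le:TapBasicRd}, and both handle the reverse inclusion by comparing the $e_{d+2}$-free part of an element of $F_{ap}\cap\Spun{d}$ against the canonical form of an element of $\Spun{d}$ to force the $C\ell_d^0$-factor into $\Spin{d}$. The only cosmetic difference is that you invoke Lemma \ref{le:SpunDesc} directly for this canonical form while the paper cites the equivalent decomposition $x=\gamma_x+\tfrac{1}{2}e_{d+1}e_{d+2}(p_x\gamma_x-\gamma_x i(a))$ established in the proof of Theorem \ref{th:SpunToSE}; you also spell out the nilpotency and commutation identities that collapse the product defining $F_{ap}$, which the paper leaves implicit.
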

\begin{proof}
Let $x\in F_{ap} \cap \Spun{d}$.
Since $x\in F_{ap}$, there exists $\delta\in C\ell_d^0$ such that
\begin{equation} \label{eq:XinM}
x= \left(\BI+\frac{1}{2}e_{d+1}e_{d+2}i(p)\right)\delta\left(\BI-\frac{1}{2}e_{d+1}e_{d+2}i(a)\right) = \delta + \frac{1}{2}e_{d+1}e_{d+2}\left(i(p)\delta - \delta i(a)\right).
\end{equation}

As in the proof of Theorem \ref{th:SpunToSE}, since $x\in \Spun{d}$ there exist $\gamma_x \in \Spin{d}$ and $p_x \in i(\RR^d)$ such that $x = \gamma_x + \frac{1}{2}e_{d+1}e_{d+2}\left(p_x\gamma_x - \gamma_x i(a)\right)$.
Combining this with \eqref{eq:XinM} implies that $\gamma_x = \delta$ and $p_x = i(p)$.
By Lemma \ref{le:TapBasicRd} we get that $x\in T_{ap}$.
We conclude that $F_{ap} \cap \Spun{d} \subseteq T_{ap}$.

For the other direction, consider $x\in T_{ap}$.
By Lemma \ref{le:TapBasicRd}, there exists $\gamma \in \Spin{d}$ such that
\[ x = \gamma + \frac{1}{2}e_{d+1}e_{d+2}\left(i(p)\gamma - \gamma i(a)\right) = \left(\BI+\frac{1}{2}e_{d+1}e_{d+2}i(p)\right)\gamma\left(\BI-\frac{1}{2}e_{d+1}e_{d+2}i(a)\right) \in F_{ap}. \]
That is $T_{ap} \subseteq F_{ap}$.
By definition, we have that $T_{ap} \subset \Spun{d}$.
This implies $T_{ap} \subseteq \Spun{d} \cap F_{ap}$ and completes the proof of the lemma.
\end{proof}

\section{Distinct distances in $\RR^3$} \label{sec:DDR3}

In this section we prove Theorem \ref{th:DDreduction} for the case of $\RR^3$.
The proof is based on the $\Spun{3}$ group that was defined in Section \ref{sec:SpunD}.
We note that $C\ell_3^0$ is isomorphic to $\RR^4$ as a vector space.
Specifically, we consider the basis $\BI, e_1e_2, e_1e_3, e_2e_3$ of $C\ell_3^0$ and write
\[ x = x_1\cdot \BI + x_2e_1e_2 + x_3e_1e_3 + x_4 e_2e_3. \]

\begin{lemma} \label{le:Norm3}
For every $x \in C\ell_3^0$ we have $N(x) = \sum_{j=1}^4 x_j^2 \cdot \BI$. \\
\end{lemma}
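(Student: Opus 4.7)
The plan is to prove this by a direct computation, exploiting the small dimension (four basis elements) so that expanding $x\overline{x}$ term by term is completely manageable.

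First I would compute $\overline{x}$. Using the definitions from Section~\ref{sec:Perlim}, $\alpha$ fixes $\BI$ and every product of an even number of $e_j$'s, so $\alpha(x) = x$ for $x \in C\ell_3^0$. The anti-automorphism $t$ satisfies $t(e_i e_j) = e_j e_i = -e_i e_j$ for $i \ne j$. Hence
\[ \overline{x} = \alpha(t(x)) = x_1 \BI - x_2 e_1 e_2 - x_3 e_1 e_3 - x_4 e_2 e_3. \]

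Next I would expand the product $x\overline{x}$ using the defining relations $e_j^2 = -\BI$ and $e_j e_k = -e_k e_j$ for $j\ne k$. The diagonal contribution comes from the squares of the basis elements; for instance $(e_1 e_2)^2 = -e_1^2 e_2^2 = -\BI$, and similarly $(e_1 e_3)^2 = (e_2 e_3)^2 = -\BI$. Multiplying by the $-1$ sign that appears in $\overline{x}$, each diagonal contribution becomes $+x_j^2 \BI$, giving the desired sum.

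The remaining work is to verify that all off-diagonal terms cancel in pairs. For each unordered pair $\{e_i e_j, e_k e_\ell\}$ of distinct basis elements, the two cross products appear with opposite overall signs, and the anticommutation relations give $(e_i e_j)(e_k e_\ell) = -(e_k e_\ell)(e_i e_j)$ in each of the three cases that arise. For example,
\[ (e_1 e_2)(e_1 e_3) = -e_1 e_1 e_2 e_3 = e_2 e_3, \qquad (e_1 e_3)(e_1 e_2) = -e_2 e_3, \]
and the analogous identities hold for the $\{e_1 e_2, e_2 e_3\}$ and $\{e_1 e_3, e_2 e_3\}$ pairs; in each case the two cross contributions to $x\overline{x}$ add to zero. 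There is no serious obstacle here — the only thing to keep track of is bookkeeping of signs — so the lemma follows at once.
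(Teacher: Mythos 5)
Your proof is correct and follows essentially the same route as the paper: compute $\overline{x} = \alpha(t(x))$ term by term, then expand $x\overline{x}$ using the Clifford relations. The paper simply states that the formula for $\overline{x}$ ``immediately implies'' the result, while you supply the bookkeeping behind that word (the diagonal terms contribute $+x_j^2 \BI$ since $(e_ie_j)^2 = -\BI$, and the off-diagonal contributions cancel because of anticommutation, e.g.\ $(e_1e_2)(e_1e_3) = -(e_1e_3)(e_1e_2)$); your identities and the conclusion are all right, though the phrase ``the two cross products appear with opposite overall signs'' is a bit misleading --- both cross terms carry the same coefficient $-x_jx_k$, and the cancellation comes entirely from the anticommutation you then cite.
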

\begin{proof}
Using the above notation
\begin{align*}
\overline{x} &= \alpha(t(x_1\cdot \BI + x_2e_1e_2 + x_3e_1e_3 + x_4e_2e_3)) = x_1\cdot \BI - x_2e_1e_2 - x_3e_1e_3 - x_4e_2e_3.
\end{align*}
This immediately implies $N(x) = x\overline{x} = x_1^2+x_2^2+x_3^2+x_4^2$.
\end{proof}

By combining Corollary \ref{co:SpinDefLowD} and Lemma \ref{le:Norm3}, we get that
\begin{equation} \label{eq:Spin3Alt}
\Spin{3} = \left\{\left(x_1,x_2,x_3,x_4\right)\in C\ell_3^0 :\, \sum_{j=1}^4 x_j^2 = 1 \right\}.
\end{equation}

We are now ready to derive our reduction for distinct distances in $\RR^3$.

\begin{theorem} \label{th:R3DD}
The problem of deriving a lower bound on the minimum number of distinct distances spanned by $n$ points in $\RR^3$ can be reduced to the following problem:
\vspace{-2mm}

\begin{quotation}
Let $\flats$ be a set of $n$ distinct 2-flats in $\RR^5$, such that every two flats intersect in at most one point,
every point of $\RR^5$ is contained in $O(\sqrt{n})$ flats of $\flats$, and every hyperplane in $\RR^5$ contains $O(\sqrt{n})$ of these flats.
Find an upper bound on the number of $k$-rich points, for every $2\le k = O(n^{1/3+\eps})$ (for some $\eps>0$).
\end{quotation}
\vspace{-2mm}

Deriving the bound $O\left(\frac{n^{5/3}}{k^{2+\eps}}\right)$ for the number of $k$-rich points would yield the conjectured lower bound of $\Omega(n^{2/3})$ distinct distances.
\end{theorem}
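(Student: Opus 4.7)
My plan is to carry out the Elekes--Sharir--Guth--Katz strategy in $\RR^3$ using $\Spun{3}$ (from Section \ref{sec:SpunD}) in place of the planar group $\SE{2}$. Starting from a set $P\subset\RR^3$ of $n$ points spanning $D$ distinct distances, a standard Cauchy--Schwarz argument bounds the number of ordered congruent quadruples $Q=|\{(a,b,c,d)\in P^4:|ab|=|cd|\}|$ from below by $n^4/D$. It therefore suffices to encode $Q$ as a weighted incidence count on a suitable family of flats and invoke the hypothesized incidence bound.

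For each ordered pair $(a,b)\in P^2$ with $a\neq b$ I associate the set $T_{ab}\subset\Spun{3}$ of spun elements whose action sends $a$ to $b$. By Lemma \ref{le:TapStructureRd}, $T_{ab}=F_{ab}\cap\Spun{3}$ where $F_{ab}\subset Z_3^0\cong\RR^8$ is the explicit $4$-dimensional linear subspace from \eqref{eq:FapDefD}; and by Theorem \ref{th:SpunToSE} any point lying in $k$ of the sets $T_{a_ib_i}$ projects to a single proper rigid motion of $\RR^3$ that simultaneously sends each $a_i$ to $b_i$, witnessing $\Theta(k^2)$ congruent quadruples. Thus $Q$ is controlled, up to acceptable factors, by the number of rich points on the flats derived from the $n^2$ subspaces $F_{ab}$.

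The central difficulty is a dimension mismatch: inside the $6$-dimensional $\Spun{3}$, each $T_{ab}$ is $3$-dimensional and two generic $T_{ab}, T_{cd}$ meet in a one-parameter family of rotations about the axis joining the two image points, so a naive incidence count on $\Spun{3}$ itself is ill-defined. My plan to resolve this is a two-step reduction. First, using the explicit form of $F_{ab}$ in $Z_3^0$ I extract from each $F_{ab}$ a $3$-flat living inside a canonical copy of $\RR^6$, obtaining the intermediate family $\lines$ of $3$-flats in $\RR^6$ mentioned in the introduction. Second, I intersect $\lines$ with a generic affine $5$-flat of $\RR^6$; transversality cuts each $3$-flat to a $2$-flat and forces the residual $1$-dimensional intersection curves to meet the $5$-flat in at most a single point, producing the required family of $2$-flats in $\RR^5$.

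Once the family of $2$-flats is in hand, the final task is to verify the three hypotheses of the theorem. Pairwise intersection in at most a point follows from the transversality achieved in the previous step. The bound ``$O(\sqrt n)$ flats through each point'' translates, via Theorem \ref{th:SpunToSE} and Lemma \ref{le:RhoDef}, to the classical fact that a single proper rigid motion of $\RR^3$ can map at most $O(\sqrt n)$ points of $P$ into $P$, which holds after a generic perturbation of $P$. I expect the hardest step to be the hyperplane bound: it requires identifying the defining equations of each $2$-flat inside $\RR^5$ explicitly (parallel to what Section \ref{sec:structure} achieves for $\lines$), showing that containment in a fixed hyperplane amounts to a low-degree algebraic condition on the pair $(a,b)\in P^2$, and then bounding the number of pairs of $P$ that satisfy it by $O(\sqrt n)$, presumably via a dimension-count argument together with a mild general-position hypothesis on $P$.
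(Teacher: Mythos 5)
Your high-level plan matches the paper's: double count $|Q|$ via Cauchy--Schwarz, translate $Q$ into intersections of the sets $T_{ap}\subset\Spun{3}$, pass via the linear flats $F_{ap}$ to a family $\lines$ of $3$-flats in $\RR^6$, and slice by a generic $5$-flat to obtain $2$-flats in $\RR^5$. However, the plan for verifying the two incidence hypotheses contains a genuine error that would break the proof, and it overlooks one necessary technical step.

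The claim that ``a single proper rigid motion of $\RR^3$ can map at most $O(\sqrt n)$ points of $P$ into $P$, which holds after a generic perturbation of $P$'' is false and also not permissible. The statement must hold for \emph{every} $n$-point set $P$ (the eventual conclusion is a lower bound on distinct distances for arbitrary point sets), so you cannot perturb $P$; and in any case a rigid motion can map up to $n$ points of $P$ into $P$ (the identity does so, and so do nontrivial motions when $P$ has symmetry). Fortunately the hypothesis only requires $O(\sqrt{|\flats|})$ flats per point, and since $|\flats|=\Theta(n^2)$ this means at most $O(n)$ flats; this follows for free from disjointness. Fix $a\in P$. For $p\neq q$ the sets $T_{ap}$ and $T_{aq}$ are disjoint (one rigid motion cannot send $a$ to two places), so any $x\in\Spun{3}$ lies in at most one $T_{ap}$ per choice of $a$. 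Since $\eta$ is a bijection on $\Spun{3}_+$, each point of $\RR^6$ lies on at most $n$ of the $\Theta(n^2)$ flats $L_{ap}$, and this bound survives the generic $5$-flat cut. The hyperplane bound, which you flagged as the hardest step and proposed to handle via explicit defining equations and a dimension count on $P$ (again under a general-position hypothesis that is not available), is in fact just as easy once one proves a small additional lemma: when $T_{ap}\cap T_{bq}=\emptyset$, the linear spaces $F_{ap}$ and $F_{bq}$ meet only at the origin, hence $L_{ap}$ and $L_{bq}$ together affinely span all of $\RR^6$ and cannot both lie in a hyperplane. Applying this with the same disjointness of $T_{ap}$ and $T_{aq}$ shows that any hyperplane of $\RR^6$ contains at most one $L_{ap}$ per $a\in P$, i.e.\ at most $n$ flats, which is again $O(\sqrt{|\flats|})$. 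No genericity of $P$ and no explicit equations are needed.

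Finally, the passage from $\Spun{3}$ to $\RR^6$ is a central projection from the origin, so intersections $T_{ap}\cap T_{bq}$ that happen to lie entirely in the hyperplane at infinity $H_0=\{x_1=0\}$ are lost. You need to show that at most half of the quadruples in $Q$ suffer this loss; the paper characterizes the bad quadruples as those with $a-b=q-p$, and observes that whenever $(a,p,b,q)$ is bad, the swapped quadruple $(b,p,a,q)$ is good. Without some such accounting, the lower bound $|Q|\gtrsim n^4/D$ does not directly control the number of flat intersections, and the reduction is incomplete.
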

\begin{proof}
Let $\pts$ be a set of $n$ points in $\RR^3$.
Let $D$ denote the number of distinct distances that are spanned by $\pts$, and denote these distances as $\delta_1,\ldots, \delta_D$.
Recalling that $|uv|$ is the distance between the points $u$ and $v$, we set
\begin{equation*} 
Q = \left\{(a,b,p,q)\in \pts^4: |ab| = |pq|>0 \right\}.
\end{equation*}
The quadruples of $Q$ are ordered, so $(a,b,p,q)$ and $(b,a,p,q)$ are considered
as two distinct elements of $Q$.
The proof is based on double counting $|Q|$.

For every $j \in \{1,\ldots,D\}$, let $E_j = \{(a,b)\in \pts^2 : |ab|=\delta_j \}$.
Since every ordered pair of distinct points $(a,b)\in \pts^2$ appears in exactly one set $E_j$, we have that $\sum_{j=1}^D |E_j| = n^2-n  > n^2/2$.
The Cauchy-Schwarz inequality implies
\begin{equation} \label{eq:lowerQbi}
|Q| = \sum_{j=1}^D |E_j|^2 \ge \frac{1}{D}\left(\sum_{j=1}^D |E_j| \right)^2 > \frac{n^4}{4D}.
\end{equation}

For $a,b,p,q\in \RR^3$ with $a\neq b$, we have $|ab|=|pq|$ if and only if there exists a proper rigid motion in $\SE{3}$ that takes both $a$ to $p$ and $b$ to $q$.
Thus, for every $(a,p)\in \pts^2$ we set
\[ R_{ap} = \{\gamma \in \SE{3} :\, a^{\gamma} = p\}. \]

To derive an upper bound for $|Q|$ it suffices to bound the number of quadruples $(a,b,p,q)\in \pts^4$ that satisfy $a\neq b$ and $R_{ap}\cap R_{bq} \neq \emptyset$.
Since we wish to work in $\Spun{3}$ rather than in $\SE{3}$, we recall the following definition from \eqref{eq:TapDef}.
\[ T_{ap} = \{ x \in \Spun{3} :\, a^x = p \} = \rho^{-1}(R_{ap}). \]

Recall from Theorem \ref{th:SpunToSE} that the homomorphism $\rho$ is surjective with kernel $\{\BI,-\BI\}$.
That is, for every point of $R_{ap}\cap R_{bq}$ there are two corresponding points in $T_{ap}\cap T_{bq}$.
It thus suffices to bound the number of quadruples $(a,b,p,q)\in \pts^4$ that satisfy $a\neq b$ and $T_{ap}\cap T_{bq} \neq \emptyset$.

Before getting to the more technical details of the proof, we provide a brief sketch of the rest of the proof.
We will show that $\Spun{3}$ can be embedded in $\RR^8$ as a well-behaved six-dimensional variety (see Lemma \ref{le:Spun3}).
Under this embedding, each set $T_{ap}$ is a three-dimensional variety that corresponds to an intersection of the $\Spun{3}$ variety with a four-dimensional linear subspace.
We project the $\Spun{3}$ variety in $\RR^8$ from the origin onto the hyperplane defined by $x_1=1$, and then perform a standard projection by removing the coordinates $x_1$ and $x_8$.

Combining the above projections gives a map that is a bijection between most of the $\Spun{3}$ variety and $\RR^6$.
This map takes each set $T_{ap}$ to a 3-flat in $\RR^6$, and every two such 3-flats are either disjoint or intersect in a line.
Since the map is a bijection only after removing a small part of $\Spun{3}$, we get that a quadruple $(a,p,b,q)$ is in $Q$ if and only if the two corresponding 3-flats in $\RR^6$ are contained in a common hyperplane.
By performing a generic projective transformation and then intersecting the 3-flats with a hyperplane, we obtain an incidence problem between points and 2-flats in $\RR^5$.

\parag{From $\Spun{3}$ to $\RR^6$.}
Recall that $\Spun{3}$ is contained in the eight-dimensional subspace $Z_3^0\subset X_3$ generated by $\BI,e_1e_2,e_1e_3,e_2e_3,e_1e_4e_5, e_2e_4e_5, e_3e_4e_5, e_1e_2e_3e_4e_5$.
We consider $Z_3^0$ as $\RR^8$ by mapping these basis elements to the standard basis vectors of $\RR^8$.
That is, we write $x=x_1\cdot \BI +x_2e_1e_2 + x_3e_1e_3 + x_4e_2e_3+ x_5e_1e_4e_5 +x_6e_2e_4e_5 + x_7 e_3e_4e_5 +x_8e_1e_2e_3e_4e_5$ as the point $(x_1,x_2,\ldots,x_8)\in \RR^8$.
With this notation, we study the behavior of $\Spun{3}$ as a set in $\RR^8$.
Set
\[ G = \left\{x\in \RR^8 :\, x_1x_8 - x_2x_7 + x_3x_6 - x_4x_5 =0 \right\} \quad \text{ and } \quad \cyl = \left\{x\in \RR^8 :\, x_1^2 +x_2^2+x_3^2+x_4^2 = 1\right\}. \]

\begin{lemma} \label{le:Spun3}
$\Spun{3} = G \cap \cyl$.
\end{lemma}
\begin{proof}
For every $x\in Z_3^0$ we have
\begin{align*}
\overline{x} &= \alpha(t(x_1\cdot \BI + x_2 e_1e_2 + x_3e_1e_3 +x_4e_2e_3 +x_5e_1e_4e_5 +x_6e_2e_4e_5+ x_7e_3e_4e_5 +x_8 e_1e_2e_3e_4e_5)) \\
&= x_1\cdot \BI - x_2e_1e_2 - x_3e_1e_3 - x_4e_2e_3 - x_5e_1e_4e_5 - x_6e_2e_4e_5 - x_7e_3e_4e_5 + x_8e_1e_2e_3e_4e_5,
\end{align*}
and thus
\begin{equation} \label{eq:SpunNorm}
N(x) = x\overline{x} = \left(x_1^2+x_2^2+x_3^2+x_4^2\right) \BI+ 2\left( x_1x_8 - x_2x_7 + x_3x_6-x_4x_5\right)e_1e_2e_3e_4e_5.
\end{equation}
That is, $N(x)=\BI$ if and only if $x \in \cyl \cap G$.
Combining this with \eqref{eq:SpunDef} implies that $\Spun{3} \subseteq \cyl \cap G$.

For the other direction, consider $x\in \cyl\cap G$.
By \eqref{eq:SpunNorm} we have that $N(x) = \BI$.
Note that we can write $x = \gamma_1 + e_4e_5 \gamma_2$ from some $\gamma_1 \in C\ell_{3}^0$ and $\gamma_2 \in C\ell_{3}^1$.
We then get
\[ \BI = N(x) = (\gamma_1 + e_4e_5 \gamma_2)(\overline{\gamma_1} + e_4e_5 \overline{\gamma_2}) = \gamma_1\overline{\gamma_1} + e_4e_5(\gamma_1\overline{\gamma_2} + \gamma_2\overline{\gamma_1}). \]
This implies that $N(\gamma_1)= \gamma_1\overline{\gamma_1} =\BI$ and $\gamma_1\overline{\gamma_2} = - \gamma_2\overline{\gamma_1}$.
From \eqref{eq:Spin3Alt} we get that $\gamma_1 \in \Spin{3}$.

Since $\gamma_2\overline{\gamma_1} \in C\ell_{3}^1$, there exist $u\in \RR^3$ and $\lambda \in \RR$ such that $\gamma_2\overline{\gamma_1} = i(u) +\lambda e_1e_2e_3$.
Since $\overline{e_1e_2e_3} = e_1e_2e_3$, we have $\overline{\gamma_2\overline{\gamma_1}} = -i(u) +\lambda e_1e_2e_3$.
On the other hand, we have
\[ \overline{\gamma_2\overline{\gamma_1}} = \gamma_1\overline{\gamma_2} = -\gamma_2\overline{\gamma_1} = -i(u)-\lambda e_1e_2e_3. \]
Thus, it must be that $\lambda = 0$.
This in turn implies that $\gamma_2\overline{\gamma_1} \in i(\RR^3)$ and $\gamma_1\overline{\gamma_2} = - \gamma_2\overline{\gamma_1}\in i(\RR^3)$.

For every $v\in \RR^3$, we have
\begin{align*}
x\left(e_5i(v) +e_4\right)\overline{x} &= (\gamma_1 + e_4e_5 \gamma_2)\left(e_5i(v) +e_4\right)(\overline{\gamma_1} + e_4e_5 \overline{\gamma_2})\\
&= \gamma_1 e_5i(v) \overline{\gamma_1} + (\gamma_1 + e_4e_5 \gamma_2)e_4(\overline{\gamma_1} + e_4e_5 \overline{\gamma_2})\\
&= e_5\gamma_1 i(v) \overline{\gamma_1} + \gamma_1e_4\overline{\gamma_1} + \gamma_1e_4e_4e_5 \overline{\gamma_2}  + e_4e_5 \gamma_2e_4\overline{\gamma_1} \\
&= e_5\left(\gamma_1i(v)\overline{\gamma_1} + \gamma_2\overline{\gamma_1} - \gamma_1\overline{\gamma_2}\right) + e_4.
\end{align*}
By the above, $\gamma_1i(v)\overline{\gamma_1} + \gamma_2\overline{\gamma_1} - \gamma_1\overline{\gamma_2}\in i(\RR^3)$.
From the definition in \eqref{eq:SpunDef}, we conclude that $x\in \Spun{3}$.
That is, $\cyl\cap G \subseteq \Spun{3}$, which in turn implies $S\cap G = \Spun{3}$.
\end{proof}

The proof of Lemma \ref{le:Spun3} also implies that $\Spun{3} = \left\{ x \in Z_3^0 :\ N(x) = \BI \right\}$.
We will not rely on this observation.

We now perform a \emph{gnomonic projection}\footnote{Recall that in a gnomonic projection we project the sphere $\SSS^2$ onto a tangent plane, by shooting rays from the center of $\SSS^2$ onto the plane.}, although with the cylindrical hypersurface $\cyl$ rather than a sphere.
Let $\pi_8: \RR^8 \to \RR^7$ be the projection defined by $\pi_8(x_1,x_2,...,x_8) = (x_2,...,x_8)$.
Let $H_0$ denote the hyperplane in $\RR^8$ defined by $x_1=0$ and let $H_1$ denote the hyperplane defined by $x_1=1$.
For each $x\in \RR^8\setminus H_0$ there exists a unique $\lambda_x \in \RR$ such that the $x_1$-coordinate of $\lambda_x x$ is 1.
We define $\pi: \RR^8\setminus H_0 \to \RR^7$ as $\pi(x) = \pi_8(\lambda_x x)$.
That is, $\pi$ projects $x$ from the origin onto $H_1$ and then removes the first coordinate of the resulting point.

For points $a,p\in \pts$, let $F_{ap}$ be defined as in \eqref{eq:FapDefD}.
Note that $F_{ap} \subset Z_3^0$ is a four-dimensional subspace of $\RR^8$.
Since $F_{ap}$ is ruled by lines incident to the origin, we have that $\pi(F_{ap} \setminus H_0) = \pi_8(F_{ap} \cap H_1)$ and $F_{ap} \not\subseteq H_1$.
In the definition \eqref{eq:FapDefD}, by taking an element of $C\ell_3^0$ with a constant term $1\cdot \BI$ we get that $F_{ap} \cap H_1\neq \emptyset$.
Since $F_{ap}$ is a 4-flat and $H_1$ is a hyperplane that intersects $F_{ap}$ without containing it, the intersection $F_{ap} \cap H_1$ is a 3-flat.
Since the restriction of $\pi_8$ to $H_1$ is linear and injective, we get that $\pi(F_{ap} \setminus H_0)$ is a 3-flat in $\RR^7$.

Note that $\cyl$ is a cylindrical hypersurface, and let $\cyl_+$ be set of points of $\cyl$ with a positive $x_1$-coordinate.
By Lemmas \ref{le:TapStructureRd} and \ref{le:Spun3} we have $T_{ap} = F_{ap} \cap \cyl \cap G$, which implies $\pi(T_{ap}\cap \cyl_+) \subseteq \pi(F_{ap}\setminus H_0)$.
By \eqref{eq:Spin3Alt} we have that $T_{00} = \Spin{3}=F_{00}\cap \cyl$.
This implies
\begin{align*}
T_{ap} &= \left(\BI+\frac{1}{2}e_4e_5i(p)\right)T_{00}\left(\BI-\frac{1}{2}e_4e_5i(a)\right)\\
&= \left(\left(\BI+\frac{1}{2}e_4e_5i(p)\right)F_{00}\left(\BI-\frac{1}{2}e_4e_5i(a)\right) \right)\bigcap \left(\left(\BI+\frac{1}{2}e_4e_5i(p)\right)\mathcal{C}\left(\BI-\frac{1}{2}e_4e_5i(a)\right)\right)\\
&\hspace{135mm}= F_{ap} \cap \mathcal{C}.
\end{align*}
Thus, for every $v\in H_1\cap F_{ap}$ there exists $r\in \RR$ such that $rv \in \cyl_+$.
That is, $\pi(F_{ap} \setminus H_0) \subseteq \pi(T_{ap} \cap \cyl_+)$, which in turn implies $\pi(T_{ap}\cap \cyl_+) = \pi(F_{ap}\setminus H_0)$.
We conclude that $\pi$ maps each set $T_{ap}\cap \cyl_+$ onto a 3-flat in $\RR^7$.

Let $g:\RR^8 \to \RR$ be the map defined by $g(x_1,...,x_8) = x_1x_8 - x_2x_7 + x_3x_6-x_4x_5$.
Note that $G = g^{-1}(0)$.
For every $v\in G$ and $r\in \RR$ we have $g(r v) = r^2 g(v)$, so $rv\in G$.
That is, $G$ is ruled by lines incident to the origin, which implies that $\pi(G\setminus H_0) = \pi_8(G\cap H_1)$.
Let $g_7: \R^7 \to \RR$ be the map defined by $g_7(x_2,...,x_8) = x_8 - x_2x_7 + x_3x_6-x_4x_5$ and note that $\pi(G\setminus H_0) = g_7^{-1}(0)$.
We set $G_7 = g_7^{-1}(0)\subset \RR^7$.
Since each $T_{ap} \cap \cyl_+ \subset \Spun{3} \subset G$, every 3-flat of the form $\pi(T_{ap} \cap \cyl_+)$ is contained in $G_7$.
Given $(x_2,\ldots,x_8)\in \RR^7$, let $x=(1,x_2,\ldots,x_8)$.
Then there exists $r\in \RR$ such that $y=r x$ is the unique point on $\cyl_+$ that satisfies $\pi(y) = (x_2,\ldots,x_8)$.
That is, the restriction of $\pi$ to $\cyl_+$ is a bijection between $\cyl_+$ and $\RR^7$.
Moreover, $\pi$ maps $G$ to $G_7$ (it is not injective in this domain) and maps each $T_{ap}\cap \cyl_+$ to a 3-flat contained in $G_7$.

Let $\pi_7: \RR^7\to \RR^6$ be the projection that is defined by $\pi_7(x_2,...,x_7,x_8) = (x_2,...,x_7)$.
Since $g_7(x_2,...,x_7,x_8) = g_7(x_2,...,x_7,x_8')$ implies $x_8 = x_8'$, the restriction of $\pi_7$ to $G_7$ is injective.
Since $\pi_7$ is linear and every 3-flat of the form $\pi(T_{ap} \cap \cyl_{+})$ is contained in $G_7$, we get that $\pi_7(\pi(T_{ap} \cap \cyl_{+}))$ is a 3-flat in $\RR^6$.
Furthermore, since both the restriction of $\pi_7$ to $G_7$ and the restriction of $\pi$ to $\cyl_{+}$ are bijections, the restriction of $\pi_7\circ \pi$ to $\cyl_{+}\cap G$ is injective.
For every $v\in G\setminus H_0$ there exists $r\in \RR$ such that $rv\in \cyl_{+} \cap G$.
That is, $\eta = \pi_7\circ \pi$ is a bijection from $G\cap \cyl_+$ to $\RR^6$.

\parag{Studying intersections of 3-flats.}
Recall from Lemma \ref{le:TapStructureRd} that $T_{ap} = F_{ap} \cap \Spun{3}$.
To study intersections of the 3-flats in $\RR^6$, we first study the intersections $F_{ap}\cap F_{bq}$.

\begin{lemma} \label{le:EmptyIntTCond}
We have that $T_{ap}\cap T_{bq} = \emptyset$ if and only if $F_{ap}\cap F_{bq} = \{0\}$.
\end{lemma}
\begin{proof}
By Lemma \ref{le:TapStructureRd}, $T_{ap} = F_{ap} \cap \Spun{3}$ and $T_{bq} = F_{bq} \cap \Spun{3}$.
Thus, $F_{ap}\cap F_{bq} = \{0\}$ immediately implies $T_{ap}\cap T_{bq} = \emptyset$.

Next, we assume that $F_{ap}\cap F_{bq} \neq \{0\}$.
For any $v\in \RR^3$ we have $\left(\BI+\frac{1}{2}e_{4}e_{5}i(v)\right) \left(\BI-\frac{1}{2}e_{4}e_{5}i(v)\right) = \BI$.
Combining this with the definition of $F_{ap}$ gives
\begin{align*}
F_{ap}\cap F_{bq} &= \left(\BI+\frac{1}{2}e_{4}e_{5}i(p)\right) \left(\BI-\frac{1}{2}e_{4}e_{5}i(p)\right)\left(F_{ap}\cap F_{bq}\right)\left(\BI+\frac{1}{2}e_{4}e_{5}i(a)\right) \left(\BI-\frac{1}{2}e_{4}e_{5}i(a)\right)\\
&= \left(\BI+\frac{1}{2}e_{4}e_{5}i(p)\right)\left(C\ell_3^0\cap F_{(b-a)(q-p)}\right)\left(\BI-\frac{1}{2}e_{4}e_{5}i(a)\right).
\end{align*}

Since $F_{ap}\cap F_{bq} \neq \{0\}$, we have that $C\ell_3^0\cap F_{(b-a)(q-p)} \neq \{0\}$.
That is, there exist $\gamma,\delta\in C\ell_3^0$ such that
\[ \gamma = \left(\BI+\frac{1}{2}e_{4}e_{5}i(q-p)\right)\delta\left(\BI-\frac{1}{2}e_{4}e_{5}i(b-a)\right).\]
By comparing the terms that do not depend on $e_5$, we get $\gamma=\delta$.
By then comparing the coefficient of $e_5$ on each side, we get  $i(q-p)\gamma = \gamma i(b-a)$.

Note that for any $x \in C\ell_3^0$, the coefficient of $\BI$ in $x\overline{x}$ is equal to the coefficient of $\BI$ in $\overline{x}x$ (this coefficient equals $\|x\|^2$ when thinking of $x$ as a point in $\RR^4$, as in the beginning of this section).
Recall that for any $s\in \RR^3$ we have $i(s)\overline{i(s)} = \overline{i(s)}i(s) = \|s\|^2 \cdot \BI$.
By taking $x = i(q-p)\gamma = \gamma i(b-a)$, we get that the coefficient of $\BI$ in $\overline{\gamma}\overline{i(q-p)} i(q-p)\gamma = \|q-p\|^2\overline{\gamma}\gamma$ is equal to the coefficient of $\BI$ in $\gamma i(b-a) \overline{i(b-a)}\overline{\gamma} = \|b-a\|^2\gamma\overline{\gamma}$.
Since the coefficients of $\BI$ in $\overline{\gamma} \gamma$ and $\gamma \overline{\gamma}$ are equal, it follows that $\|b-a\| = \|q-p\|$.
Since the vectors $b-a,q-p\in \RR^3$ have the same length, there exists a rotation $\beta \in \Spin{3}$ such that $\beta i(b-a)\beta^{-1} = i(q-p)$.
By Lemma \ref{le:TapBasicRd}, we have
\[ \beta_{ap} = \left(\BI+\frac{1}{2}e_{4}e_{5}i(p)\right) \beta \left(\BI-\frac{1}{2}e_{4}e_{5}i(a)\right) = \beta +\frac{1}{2}e_{4}e_{5}(i(p)\beta-\beta i(a))\in T_{ap}. \]

To prove that $T_{ap}\cap T_{bq}\neq \emptyset$, we show that $\beta_{ap}\in T_{bq}$.
Since $\beta_{ap}\in T_{ap}$, we have that $\beta_{ap}\in \Spun{3}$.
It remains to prove that $\beta_{ap}$ takes $b$ to $q$.
Indeed, recalling that $\beta$ takes $b-a$ to $q-p$ gives
\begin{align*}
\beta_{ap}(e_{5}i(b)+e_{4})\overline{\beta_{ap}} &= \left(\beta +\frac{1}{2}e_{4}e_{5}(i(p)\beta-\beta i(a))\right) (e_{5}i(b)+e_{4}) \left(\overline{\beta} +\frac{1}{2}e_{4}e_{5}(i(a)\overline{\beta}-\overline{\beta}i(p))\right) \\
&= \beta (e_{5}i(b)+e_{4}) \overline{\beta} + \frac{1}{2}e_{5} \left(\beta e_{4}e_{4}(i(a)\overline{\beta}-\overline{\beta}i(p)) + e_{4}(i(p)\beta-\beta i(a))e_{4}\overline{\beta}\right) \\
&= \beta (e_{5}i(b)) \overline{\beta} +e_{4} + \frac{1}{2}e_{5} \left(-(\beta i(a)\overline{\beta}-i(p)) + (i(p)-\beta i(a)\overline{\beta})\right) \\
&=  e_{5}(\beta i(b-a)\overline{\beta})+e_{4}  + e_{5}i(p) = e_{5}i(q-p)+e_{4}  + e_{5}i(p) = e_{5}i(q)+e_{4}.
\end{align*}
\end{proof}

We next study the case where $F_{ap} \cap F_{bq} \neq \{0\}$.

\begin{lemma} \label{le:FapIntersectionChar}
If $F_{ap} \neq F_{bq}$ and $F_{ap} \cap F_{bq} \neq \{0\}$, then
\begin{align*}
F_{ap} \cap F_{bq} = \left(\BI + \frac{1}{2}e_{4}e_{5}i(p)\right)\beta \cdot C\ell_{2}^0 \cdot \alpha \left(\BI - \frac{1}{2}e_{4}e_{5}i(a)\right),
\end{align*}
for any $\alpha,\beta \in \Spin{3}$ that satisfy $\alpha \frac{i(b-a)}{\|b-a\|}\alpha^{-1} = e_{3}$ and $\beta e_{3} \beta^{-1} = \frac{i(q-p)}{\|q-p\|}$.
\end{lemma}
\begin{proof}
By the assumptions and Lemma \ref{le:EmptyIntTCond}, we have that $a\neq b$ and $p\neq q$, so $\|b-a\|$ and $\|q-p\|$ are nonzero.
Thus, the definitions of $\alpha$ and $\beta$ are valid.
Let
\[ N_{ap} = \left(\BI + \frac{1}{2}e_{4}e_{5}i(p)\right)\beta \cdot C\ell_{2}^0 \cdot \alpha \left(\BI - \frac{1}{2}e_{4}e_{5}i(a)\right). \]

Since $\alpha,\beta \in C\ell_3^0$, we have $\beta \cdot C\ell_{2}^0 \cdot \alpha \subset C\ell_3^0$ so $N_{ap}\subseteq F_{ap}$.
We note that
\begin{align}
\alpha \left(\BI - \frac{1}{2}e_{4}e_{5}i(a)\right) &= \alpha \left(\BI - \frac{1}{2}e_{4}e_{5}i(a - b + b)\right) = \alpha \left(\BI - \frac{1}{2}e_{4}e_{5} i(a-b) \right)\left(\BI - \frac{1}{2}e_{4}e_{5}i(b)\right) \nonumber \\
&= \left(\alpha - \frac{1}{2}e_{4}e_{5}\alpha i(a-b)\alpha^{-1} \alpha\right)\left(\BI - \frac{1}{2}e_{4}e_{5}i(b)\right) \nonumber \\
&= \left(\BI + \|b-a\|\frac{1}{2}e_{4}e_{5}e_{3}\right)\alpha\left(\BI - \frac{1}{2}e_{4}e_{5}i(b)\right). \label{eq:FintAux1}
\end{align}
Similarly,
\begin{align}
\left(\BI + \frac{1}{2}e_{4}e_{5}i(p)\right) \beta &= \left(\BI + \frac{1}{2}e_{4}e_{5}i(p - q + q)\right) \beta = \left(\BI + \frac{1}{2}e_{4}e_{5}i(q)\right)\left(\BI + \frac{1}{2}e_{4}e_{5}i(p - q)\right) \beta \nonumber \\
&= \left(\BI + \frac{1}{2}e_{4}e_{5}i(q)\right)\beta \left(\BI + \frac{1}{2}e_{4}e_{5}\beta^{-1}i(p - q)\beta\right) \nonumber \\
&= \left(\BI + \frac{1}{2}e_{4}e_{5}i(q)\right)\beta \left(\BI - \|q-p\|\frac{1}{2}e_{4}e_{5}e_{3}\right). \label{eq:FintAux2}
\end{align}

Combining \eqref{eq:FintAux1} and \eqref{eq:FintAux2} gives
\begin{align*}
N_{ap} &= \left(\BI + \frac{1}{2}e_{4}e_{5}i(p)\right)\beta \cdot C\ell_{2}^0\cdot \alpha \left(\BI - \frac{1}{2}e_{4}e_{5}i(a)\right) \\
&= \left(\BI + \frac{1}{2}e_{4}e_{5}i(q)\right)\beta \left(\BI - \|q-p\|\frac{1}{2}e_{4}e_{5}e_{3}\right)
\cdot C\ell_{2}^0 \cdot \left(\BI + \|b-a\|\frac{1}{2}e_{4}e_{5}e_{3}\right)\alpha\left(\BI - \frac{1}{2}e_{4}e_{5}i(b)\right).
\end{align*}

By Lemma \ref{le:EmptyIntTCond}, the assumption $F_{ap} \cap F_{bq} \neq \{0\}$ implies $T_{ap} \cap T_{bq} \neq \emptyset$.
That is, there exists a rigid motion of $\SE{3}$ that takes both $a$ to $p$ and $b$ to $q$, which in turn implies that $\|b-a\| = \|q-p\|$.
Thus, for any $\gamma \in C\ell_{2}^0$ we have $ \left(\BI - \|q-p\|\frac{1}{2}e_{4}e_{5}e_{3}\right) \gamma \left(\BI + \|b-a\|\frac{1}{2}e_{4}e_{5}e_{3}\right) = \gamma$.
Combining this with the calculation above yields
\begin{align*}
N_{ap} = \left(\BI + \frac{1}{2}e_{4}e_{5}i(q)\right)\beta \cdot C\ell_{2}^0 \cdot \alpha \left(\BI - \frac{1}{2}e_{4}e_{5}i(b)\right) \subseteq F_{bq}.
\end{align*}

We conclude that $N_{ap} \subseteq F_{ap}\cap F_{bq}$.
To prove the other direction, consider $x\in F_{ap}\cap F_{bq}$.
By definition, there exist $\gamma, \gamma' \in C\ell_3^0$ such that
\begin{equation} \label{eq:XtwoDefs}
x =  \left(\BI + \frac{1}{2}e_{4}e_{5}i(p)\right)\gamma \left(\BI - \frac{1}{2}e_{4}e_{5}i(a)\right) = \left(\BI + \frac{1}{2}e_{4}e_{5}i(q)\right)\gamma' \left(\BI - \frac{1}{2}e_{4}e_{5}i(b)\right).
\end{equation}

The part of $x$ that does not involve $e_5$ needs to be identical in both definitions, so $\gamma=\gamma'$.
The part of $x$ that does involve $e_5$ also needs to be identical in both definitions, so $i(p)\gamma-\gamma i(a) = i(q)\gamma - \gamma i(b)$, or equivalently $\gamma i(b-a) = i(q-p)\gamma$.
This implies that
\[ \beta^{-1}\gamma \alpha^{-1} \alpha i(b-a)\alpha^{-1} = \beta^{-1}i(q-p)\beta \beta^{-1}\gamma \alpha^{-1}, \]
which in turn implies $\beta^{-1}\gamma \alpha^{-1}e_3 = e_3 \beta^{-1}\gamma \alpha^{-1}$.
Since $e_3$ commutes with $\beta^{-1}\gamma \alpha^{-1}$, we get that $\beta^{-1}\gamma \alpha^{-1} \in C\ell_{2}^0$.
That is, $\gamma \in \beta \cdot C\ell_{2}^0 \cdot \alpha$.
By combining this with the first equality of \eqref{eq:XtwoDefs}, we conclude that $x\in N_{ap}$ and thus that $F_{ap}\cap F_{bq} \subseteq N_{ap}$.
\end{proof}

Let $L_{ap} = \eta(T_{ap}\setminus H_0)$ be the 3-flat in $\RR^6$ that corresponds to $T_{ap}$.
Given points $a,p,b,q\in \RR^3$, we now study the intersection $L_{ap}\cap L_{bq}$.
Let $L_{apbq}=\eta\left(\langle F_{ap},F_{bq}\rangle\setminus H_0\right)$.
By comparing the definitions of $F_{ap}$ and $L_{ap}$, we note that $L_{ap}\cup L_{bq}\subset L_{apbq}$.

Note that the map $\eta(x)$ is well-defined for every point $x\in \RR^8\setminus H_0$.
Additionally, when we restrict the domain of $\eta$ to $H_1$ it becomes a linear map.
Let $\eta': \RR^8 \to \RR^6$ be the standard linear projection satisfying $\eta'(x_1,x_2,...,x_8) = (x_2,...,x_7)$.
We think of $\eta'$ as a linear extension of the restricted $\eta$ to $\RR^8$.
Denote by $\langle F_{ap},F_{bq}\rangle$ the linear subspace that is spanned by $F_{ap}$ and $F_{bq}$.

\begin{lemma} \label{le:LapbqDim}
If $T_{ap}\cap T_{bq} \not\subseteq H_0$ and $T_{ap} \neq T_{bq}$, then $L_{ap}\cap L_{bq}$ is a line.
\end{lemma}
\begin{proof}
From $T_{ap}\cap T_{bq} \not\subseteq H_0$ we have that $L_{ap}\cap L_{bq} \neq \emptyset$.
Since $L_{ap}$ and $L_{bq}$ are distinct 3-flats in $\RR^6$, their intersection is a flat of dimension between zero and two.
If $\dim\left(L_{ap}\cap L_{bq}\right) = 2$ then $\dim\left(F_{ap}\cap F_{bq} \cap H_1\right) = 2$, which in turn implies $\dim\left(F_{ap}\cap F_{bq}\right) = 3$.
This contradicts Lemma \ref{le:FapIntersectionChar} which states that $\dim\left(F_{ap}\cap F_{bq}\right) = 2$.
Thus, it remains to prove that $L_{ap}\cap L_{bq}$ is not a single point.

For any $v\in \langle F_{ap}, F_{bq} \rangle \cap H_1$, we have
\[ L_{apbq} = \eta(\langle F_{ap}, F_{bq} \rangle \setminus H_0) = \eta'(\langle F_{ap}, F_{bq} \rangle \cap H_1) = \eta'(\langle F_{ap}, F_{bq} \rangle \cap H_0) + \eta'(v). \]
This implies that
\begin{equation} \label{eq:LapbqDim}
\dim L_{apbq} = \dim \left(\langle F_{ap}, F_{bq} \rangle\cap H_0\right) - \dim \left(\langle F_{ap}, F_{bq} \rangle \cap H_0\cap \ker (\eta')\right).
\end{equation}

By Lemma \ref{le:FapIntersectionChar}, $\dim (F_{ap} \cap F_{bq}) = \dim C\ell_2^0= 2$.
Since $\dim F_{ap} = \dim F_{bq} = \dim C\ell_3^0= 4$, we have $\dim \left(\langle F_{ap}, F_{bq} \rangle\cap H_0\right) = 4+4-2-1=5$ (by definition both $F_{ap}$ and $F_{bq}$ intersect $H_0$ but are not contained in it).
Combining this with \eqref{eq:LapbqDim} leads to $\dim L_{apbq} \le 5$.
This completes the proof, since the intersection of two 3-flats in a 5-dimensional space cannot be a single point.
\end{proof}

Next, we study what happens to $L_{ap}$ and $L_{bq}$ when $T_{ap}\cap T_{bq}=\emptyset$.

\begin{lemma} \label{le:ContainingFlat}
For any $a,p,b,q\in \RR^3$, any flat in $\RR^6$ that contains $L_{ap}$ and $L_{bq}$ also contains $L_{apbq}$.
\end{lemma}
\begin{proof}
Let $W$ be a flat that contains $L_{ap}$ and $L_{bq}$.
Then there exists a linear subspace $V \subseteq \RR^6$ such that for any $w \in W$ we have $W = w + V$.
Recall that $F_{ap}\cap H_1 \neq \emptyset$.
For any $x\in \langle F_{ap},F_{bq}\rangle\cap H_1$,
\[ L_{apbq} = \eta\left(\langle F_{ap},F_{bq}\rangle\setminus H_0\right) = \eta\left(\langle F_{ap},F_{bq}\rangle\cap H_1\right) = \eta'\left(\langle F_{ap},F_{bq}\rangle\cap H_1\right) = \eta'\left(\langle F_{ap},F_{bq}\rangle\cap H_0\right) + \eta(x). \]

For $x\in F_{ap} \cap H_1$ we have that $W = \eta(x)+V$ and $L_{ap} = \eta'(x+F_{ap}\cap H_0)= \eta(x) + \eta'(F_{ap}\cap H_0)$.
Combining this with $L_{ap} \subseteq W$ gives $\eta'(F_{ap}\cap H_0) \subseteq V$.
Similarly, by taking $y\in F_{bq}\cap H_1$ we get $W = \eta(y)+V$, which in turn implies $\eta'(F_{bq}\cap H_0) \subseteq V$.
Combining the above yields $\eta'\left(\langle F_{ap},F_{bq}\rangle\cap H_0\right)\subseteq V$.
We conclude that $L_{apbq}\subseteq W$, as desired.
\end{proof}

\begin{corollary} \label{co:EmptyTIntersection}
If $T_{ap}\cap T_{bq} = \emptyset$ then no hyperplane contains both $L_{ap}$ and $L_{bq}$.
\end{corollary}
\begin{proof}
Lemma \ref{le:ContainingFlat} implies that $L_{apbq}$ is the smallest flat that contains $L_{ap}\cup L_{bq}$.
By Lemma \ref{le:EmptyIntTCond}, the assumption $T_{ap}\cap T_{bq} = \emptyset$ implies that $F_{ap} \cap F_{bq} = \{0\}$.
Since $F_{ap}$ and $F_{bq}$ are 4-flats in $Z_3^0 \cong \RR^8$ that intersect in a single point, we have $\langle F_{ap},F_{bq}\rangle = Z_3^0$.
That is, $L_{apbq} = \eta\left(\langle F_{ap},F_{bq}\rangle\setminus H_0\right) = \RR^{6}$.
 \end{proof}

We are now ready to state the connection between the distinct distances problem and the flats $L_{ap}$.
Let $Q'$ be the set of quadruples $(a,p,b,q)\in \pts^4$ such that $T_{ap}\cap T_{bq}\not\subseteq H_0$.
The following corollary is a special case of Corollary \ref{co:TapTbqIntCharRd} that we will prove in Section \ref{sec:DistancesRd}.

\begin{corollary} \label{co:TapTbqIntCharR3}
We have that $Q'\subset Q$ and $|Q'|\ge |Q|/2$.
\end{corollary}

\parag{Flats in $\RR^6$ and in $\RR^5$.}
We set
\[ \lines = \{L_{ap} :\ a,p\in \pts \text{ and } a\neq p \}.\]
Note that $\lines$ is a set of $\Theta(n^2)$ flats of dimension three in $\RR^6$.
By Corollary \ref{co:TapTbqIntCharR3}, to get an asymptotic upper bound for the number of quadruples in $Q$ it suffices to derive an upper bound for the number of quadruples $(a,p,b,q)\in \pts^4$ such that $T_{ap}\cap T_{bq} \not\subseteq \emptyset$.
By Lemma \ref{le:LapbqDim}, for every such quadruple we have that $L_{ap}\cap L_{bq}$ is a line.
On the other hand, when $T_{ap}\cap T_{bq} \subseteq H_0$ we have that $L_{ap}\cap L_{bq}=\emptyset$.
Thus, it remains to derive an upper bound on the number of pairs of flats of $\lines$ that intersect (in a line).

\begin{lemma} \label{le:3flatsRest}
(a) Every point of $\RR^6$ is contained in at most $n$ flats of $\lines$. \\
(b) Every hyperplane in $\RR^6$ contains at most $n$ flats of $\lines$.
\end{lemma}
\begin{proof}
Consider three distinct points $a,p,q\in \pts$ and note that $T_{ap}\cap T_{aq} = \emptyset$, since a rigid motion cannot simultaneously take $a$ into two distinct points.
This immediately implies part (a) of the lemma.
By Corollary \ref{co:EmptyTIntersection}, $L_{ap}$ and $L_{aq}$ cannot be in the same hyperplane, which implies part (b).
\end{proof}

Let $H_g$ be a generic hyperplane in $\RR^6$, in the sense that every 3-flat of $\lines$ intersects $H_g$ in a 2-flat, and
every line of the form $L_{ap}\cap L_{bq}$ (with $a,b,p,q\in \pts$) intersects $H_g$ at a single point.
Let $\flats = \{L_{ap} \cap H_g :\, L_{ap}\in \lines\}$ and consider $H_g$ as $\RR^5$.
Note that $\flats$ is a set of $\Theta(n^2)$ distinct 2-flats.
Every two 2-flats of $\flats$ are either disjoint or intersect in a single point.
By Lemma \ref{le:3flatsRest}, every point of $\RR^5$ is incident to at most $n$ of the 2-flats of $\flats$ and every hyperplane in $\RR^5$ contains at most $n$ of the 2-flats of $\flats$.

For every integer $k\ge 2$, let $m_k$ denote the number of points of $\RR^5$ that are contained in exactly $k$ of the 2-flats of $\flats$.
Similarly, let $m_{\ge k}$ denote the number of points of $\RR^5$ that are contained in \emph{at least} $k$ of the 2-flats of $\flats$.
Then $|Q|$ is the number of pairs of intersecting flats of $\flats$, and
\[ |Q| = \sum_{k=2}^n m_k \cdot 2\binom{k}{2} < \sum_{k=2}^n k^2 m_k = O\left( \sum_{k=1}^{\log n} 2^{2k} m_{\ge 2^k}\right). \]

If we had the bound $m_{\ge k} = O\left(\frac{n^{10/3}}{k^{2+\eps}}\right)$ for some $\eps>0$, then the above would imply  $|Q|=O(n^{10/3})$.
Combining this with \eqref{eq:lowerQbi} would imply that the points of $\pts$ span $\Omega\left(n^{2/3}\right)$ distinct distances.

An incidence result of Solymosi and Tao \cite{ST12} implies that the number of incidences between $m$ points and $n$ 2-flats in $\RR^5$, with every two 2-flats intersecting in at most one point, is $O(m^{2/3+\eps'}n^{2/3}+m+n)$ (for any $\eps'>0$).
Every incidence bound of this form has a dual formulation involving $k$-rich points (for example, see \cite[Chapter 1]{ShefferBook}).
In this case, the dual bound is: Given $n^2$ 2-flats in $\RR^5$ such that every two intersect in at most one point, for every $k\ge 2$ the number of $k$-rich points is $O\left(\frac{n^{4/(1-\eps')}}{k^{3/(1-\eps')}}+\frac{n^2}{k}\right)$.
By taking $\eps'$ to be sufficiently small with respect to $\eps$, we obtain the bound $m_{\ge k} = O\left(\frac{n^{4+\eps}}{k^{3}}+\frac{n^2}{k}\right)$.
This bound is stronger than the required bound when $k=\Omega(n^{2/3+\eps})$.
That is, it remains to consider the case where $k=O(n^{2/3+\eps})$.
This completes the proof of Theorem \ref{th:R3DD}.
\end{proof}

\section{Distinct distances in $\RR^d$} \label{sec:DistancesRd}

In this section we prove Theorem \ref{th:DDreduction} in every dimension.
While the general outline of the proof remains the same as in the proof of Theorem \ref{th:R3DD}, several steps become significantly more involved.
As before, we embed $\Spun{d}$ in a real space and then perform several projections to lower dimensional spaces.
Since Corollary \ref{co:SpinDefLowD} does not hold for $d\ge 6$, we do not have a simple description of $\Spun{d}$ as in Lemma \ref{le:Spun3}.
This leads us to study $\Spun{d}$ in a more indirect way.

Recall that $\Spun{d}$ is contained in the subspace $Z_d^0\subset X_d$ generated by $\BI$ and by products of an even number of elements from $\{e_1,e_2,\ldots,e_d,e_{d+1}e_{d+2}\}$.
Note that $Z_d^0$ has a basis of size $2^d$.
We consider $Z_d^0$ as $\RR^{2^d}$ by mapping the above basis elements to the standard basis vectors of $\RR^{2^d}$.
With this notation, we study the behavior of $\Spun{d}$ as a set in $\RR^{2^d}$.

\subsection{Studying $m$-terms} \label{ssec:mTerms}

For an even integer $m>0$, an $m$-\emph{term} of $C\ell^0_d$ is a product of $m$ distinct elements from $\{e_1,e_2,\ldots,e_d\}$ (together with a real coefficient).
Similarly, an $m$-term of $Z_d^0$ is a product of $m$ distinct elements from $\{e_1,e_2,\ldots,e_d,e_{d+1}e_{d+2}\}$ (together with a real coefficient).
In both cases a $0$-term is $\BI$ multiplied some real number.
In this section we study several basic properties of $m$-terms.
Since these are just straightforward calculations, the reader might prefer to skip this section and refer to it when necessary.

\begin{lemma} \label{le:mTermsConj}
For a fixed even $m$, let $x \in C\ell_{d}^0 \setminus \{0\cdot \BI\}$ consist entirely of $m$-terms and let $\gamma \in \Spin{d}$.
Then $\gamma x \gamma^{-1}$ also consists entirely of $m$-terms.
\end{lemma}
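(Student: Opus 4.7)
The plan is to rewrite each $m$-term as the antisymmetrization of a product of $m$ elements of $i(\R^d)$, and then exploit the three facts that conjugation by $\gamma \in \Spin{d}$ is linear, is multiplicative, and preserves $i(\R^d)$ (the last property being the defining condition of $\Spin{d}$ from \eqref{eq:SpinDef}).

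First I would verify the antisymmetrization identity: for distinct indices $1 \le j_1 < j_2 < \cdots < j_m \le d$, letting $v_a \in \R^d$ denote the $j_a$-th standard basis vector (so $i(v_a) = e_{j_a}$), we have
\[ e_{j_1} e_{j_2} \cdots e_{j_m} = \frac{1}{m!} \sum_{\sigma \in S_m} \mathrm{sgn}(\sigma)\, i(v_{\sigma(1)}) \cdots i(v_{\sigma(m)}), \]
since distinct basis elements pairwise anticommute and hence $i(v_{\sigma(1)}) \cdots i(v_{\sigma(m)}) = \mathrm{sgn}(\sigma)\, e_{j_1} \cdots e_{j_m}$ for each $\sigma$. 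By linearity, this represents the whole of $x$ as a sum of such antisymmetrized expressions (with different tuples of vectors). Applying conjugation by $\gamma$, and using the defining property of $\Spin{d}$ to supply for each $v \in \R^d$ some $v' \in \R^d$ with $\gamma\, i(v)\, \gamma^{-1} = i(v')$, multiplicativity converts each summand to an analogous expression in the new vectors. Thus $\gamma x \gamma^{-1}$ becomes a linear combination of sums of the form $\sum_\sigma \mathrm{sgn}(\sigma)\, i(w_{\sigma(1)}) \cdots i(w_{\sigma(m)})$ for various tuples $w_1, \ldots, w_m \in \R^d$.

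The only step requiring any care, and hence the main (mild) obstacle, is showing that each such antisymmetrized sum lies in the span of $m$-terms. I would expand each $i(w_a)$ in the basis $e_1, \ldots, e_d$ and distribute, obtaining a sum $\sum_{r_1, \ldots, r_m} c_{r_1, \ldots, r_m}\, e_{r_1} \cdots e_{r_m}$, and verify that the coefficient $c_{r_1, \ldots, r_m}$ is antisymmetric in its indices: swapping $r_a$ with $r_b$ is absorbed by composing $\sigma$ with the transposition $(a\, b)$, which flips $\mathrm{sgn}(\sigma)$. Hence $c_{r_1, \ldots, r_m} = 0$ whenever two indices coincide, so only products with pairwise distinct indices survive, and the sum is a linear combination of $m$-terms. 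The rest of the argument is formal manipulation.
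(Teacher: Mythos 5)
Your proof is correct, and it takes a genuinely different route from the paper's. The paper argues indirectly via norms: it observes that conjugation by $\gamma\in\Spin{d}$ is an orthogonal linear map on $C\ell_d^0\cong\RR^{2^d}$ that fixes $\BI$ (hence preserves the first coordinate), notes that conjugation cannot raise degree (each $\gamma e_{k_a}\gamma^{-1}\in i(\RR^d)$, and a product of $m$ elements of $i(\RR^d)$ has no $\ell$-terms with $\ell>m$), writes $\gamma x\gamma^{-1}=\delta+\delta'$ with $\delta$ the $m$-term part, and then shows $\|\delta'\|^2=0$ by comparing the first coordinate of $N(x-\gamma^{-1}\delta'\gamma)-N(x)-N(\gamma^{-1}\delta'\gamma)$ before and after conjugation. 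You instead realize the degree grading concretely via antisymmetrization: writing each basis $m$-term as $\frac{1}{m!}\sum_{\sigma}\mathrm{sgn}(\sigma)\,i(v_{\sigma(1)})\cdots i(v_{\sigma(m)})$, pushing conjugation through multiplicatively (using that $\gamma\,i(v)\,\gamma^{-1}\in i(\RR^d)$), and observing that the resulting alternating multilinear expression in the new vectors $w_a$ has vanishing coefficients whenever two of the summation indices coincide, so only genuine $m$-terms survive. Your argument is more transparent and directly explains \emph{why} conjugation respects the degree grading (it is the familiar fact that the Chevalley-type antisymmetrization identifies the grade-$m$ piece with $\Lambda^m\RR^d$, and $\SO{d}$ acts functorially on exterior powers), whereas the paper's argument is shorter to state given the machinery it has already set up (the norm $N$, the conjugate, and orthogonality of left/right multiplication by unit-norm elements). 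Both arguments are self-contained and both generalize to $\Pin{d}$; yours would perhaps be the more natural choice if the authors had needed finer control on which $m$-term the image is, since it gives the image coefficients explicitly.
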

\begin{proof}
Let $z\in C\ell_{d}^0$ and $\gamma \in \Spin{d}$.
We think of $C\ell_{d}^0$ as $\RR^{2^d}$ and write $\|z\|$ for the Euclidean norm of $z$ in $\RR^{2^d}$.
Note that the first coordinate of $z\overline{z}$ is $\|z\|$ and so is the first coordinate of $\overline{z}z$ (since $\|z\| = \|\overline{z}\|$).
Since $z\gamma^{-1} \overline{z\gamma^{-1}}= z\gamma^{-1}\gamma \overline{z} = z\overline{z}$, by considering the first coordinate of these expressions we get that $\|z\| = \|z\gamma^{-1}\|$.
That is, multiplication by $\gamma^{-1}$ from the right is an orthogonal transformation (with respect to the Euclidean norm).
Similarly, $\overline{\gamma z}\gamma z = \overline{z}z$ implies that multiplication by $\gamma$ from the left is also an orthogonal transformation.
We conclude that the conjugation $z \to \gamma z \gamma^{-1}$ is orthogonal with respect to the Euclidean norm.
Combining this with $\gamma \BI \gamma^{-1} =\BI$ implies that $z$ and $\gamma z \gamma^{-1}$ have the same first coordinate.

For $u_1,\ldots,u_m \in \RR^d$, the product $i(u_1)\cdots i(u_m)$ cannot contain $\ell$-terms for any $\ell > m$.
Moreover, for any $m$-term $e_{k_1}e_{k_2}\cdots e_{k_m}$ we have that $\gamma e_{k_1}e_{k_2}\cdots e_{k_m} \gamma^{-1} = \gamma e_{k_1}\gamma^{-1} \gamma e_{k_2}\gamma^{-1} \hdots \gamma e_{k_m} \gamma^{-1}$.
 This implies that $\gamma x \gamma^{-1}$ cannot contain $\ell$-terms for any $\ell > m$.

We write $\gamma x \gamma^{-1} = \delta + \delta'$, where $\delta$ consists entirely of $m$-terms and $\delta'$ consists entirely of smaller terms.
We have
\begin{align}
N(x - \gamma^{-1}\delta' \gamma) - N(x) - N(\gamma^{-1}\delta'\gamma) &= (x - \gamma^{-1}\delta' \gamma)\overline{(x - \gamma^{-1}\delta' \gamma)} - x\overline{x} - \gamma^{-1}\delta' \overline{\delta'} \gamma \nonumber \\
&=-\left(x\overline{\gamma^{-1}\delta' \gamma} + \gamma^{-1}\delta' \gamma \overline{x}\right). \label{eq:NormsExpression}
\end{align}

For any $y,z \in C\ell_{d}^0$, the first coordinate of $y\overline{z}$ is the dot product of $y$ and $z$ as vectors in $\RR^{2^d}$.
Since $\gamma^{-1}\delta' \gamma$ consists entirely of $\ell$-vector terms with $\ell < m$, the first coordinate of \eqref{eq:NormsExpression} is zero.
Since conjugation by $\gamma$ preserves the first coordinate, we have that the first coordinate of $(x - \gamma^{-1}\delta' \gamma)\overline{(x - \gamma^{-1}\delta' \gamma)} - x\overline{x} - \gamma^{-1}\delta' \overline{\delta'} \gamma$ is the same as the first coordinate of
\begin{align*}
&\gamma\left((x - \gamma^{-1}\delta' \gamma)\overline{(x - \gamma^{-1}\delta' \gamma)} - x\overline{x} - \gamma^{-1}\delta' \overline{\delta'} \gamma\right)\gamma^{-1} \\
&\hspace{18mm}= \gamma(x - \gamma^{-1}\delta' \gamma)\gamma^{-1} \gamma \overline{(x - \gamma^{-1}\delta' \gamma)}\gamma^{-1} - \gamma x\gamma^{-1}\gamma \overline{x}\gamma^{-1} - \delta' \overline{\delta'}\\
&\hspace{18mm} = \delta\overline{\delta} - \left(\delta\overline{\delta} +\delta'\overline{\delta'} + \delta \overline{\delta'} + \delta' \overline{\delta}\right) - \delta'\overline{\delta'} = -\left(\delta \overline{\delta'} + \delta' \overline{\delta} + 2\delta' \overline{\delta'}\right).
\end{align*}

Since $\delta$ and $\delta'$ do not have terms of the same size, the first coordinates of $\delta \overline{\delta'}$ and $\delta' \overline{\delta}$ are both zero.
This implies that first coordinate of $\delta' \overline{\delta'}$ is zero.
 Since this first coordinate equals $\|\delta'\|$, we get that $\delta' = 0$ and complete the proof.
\end{proof}

\begin{lemma} \label{le:TermsedAed}
For a fixed even $m$, let $x\in C\ell_{d-1}^0$ consist entirely of $m$-terms.
Then for every $a \in \RR^{d}$ the expression $xe_{d}(i(a)+e_{d})$ consists entirely of $m$-terms and $(m+2)$-terms.
It the $d$'th coordinate of $a$ is not $-1$, then $xe_{d}(i(a)+e_{d})$ contains at least one $m$-term.
\end{lemma}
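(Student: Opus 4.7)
The plan is to expand $xe_d(i(a)+e_d)$ directly using the defining relations of $C\ell_d$, and then read off the grading structure. Write $a = (a_1,\ldots,a_d)\in\RR^d$ so that $i(a) = \sum_{j=1}^d a_j e_j$. Using $e_d e_j = -e_j e_d$ for $j<d$ and $e_d^2 = -\BI$, I compute
\[
xe_d\bigl(i(a) + e_d\bigr) \;=\; \sum_{j=1}^{d-1} a_j \, xe_d e_j \;+\; a_d\, xe_d e_d \;+\; xe_d^2 \;=\; -\sum_{j=1}^{d-1} a_j \, x e_j e_d \;-\; (a_d+1)\, x.
\]
This one identity is the whole engine of the lemma.

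Next I would analyse each of the two summands separately. Since $x\in C\ell_{d-1}^0$ consists entirely of $m$-terms built from $\{e_1,\ldots,e_{d-1}\}$, the piece $-(a_d+1)x$ consists entirely of $m$-terms, and crucially, these $m$-terms do not involve $e_d$. For the other piece, fix an $m$-term $e_{k_1}\cdots e_{k_m}$ of $x$ (with $k_1<\cdots<k_m\le d-1$) and a $j\in\{1,\ldots,d-1\}$. Multiplication by $e_j$ on the right produces, after applying the anticommutation and $e_j^2=-\BI$, either an $(m+1)$-term (if $j\notin\{k_1,\ldots,k_m\}$) or an $(m-1)$-term (if $j\in\{k_1,\ldots,k_m\}$); appending $e_d$ then raises the grade by one, yielding an $(m+2)$-term or an $m$-term, respectively. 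In either case the result is divisible by $e_d$ on the right. Hence every summand of $-\sum_{j=1}^{d-1} a_j \, x e_j e_d$ is an $m$-term or an $(m+2)$-term, and every such term contains the factor $e_d$. Combining the two summands proves the first assertion: $xe_d(i(a)+e_d)$ consists entirely of $m$- and $(m+2)$-terms.

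For the second assertion, I exploit the fact that the two summands above live in complementary subspaces of $Z_d^0$: the $m$-terms produced by $-(a_d+1)x$ use only $\{e_1,\ldots,e_{d-1}\}$, whereas the $m$-terms produced by $-\sum_j a_j x e_j e_d$ all contain the factor $e_d$. These two sets of basis elements are disjoint, so no cancellation between them is possible. If $a_d\neq -1$, then $-(a_d+1)x$ is a non-zero sum of $m$-terms (recall $x\neq 0\cdot\BI$ since it consists of $m$-terms and, implicitly as in Lemma \ref{le:mTermsConj}, is non-trivial), and therefore at least one of its $m$-terms survives in the expansion of $xe_d(i(a)+e_d)$.

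There is no real obstacle here: the argument is a direct computation followed by a grading bookkeeping. The only place that needs a moment of care is the non-cancellation step, where one must notice that the $m$-terms arising from the two summands are distinguished by the presence or absence of the factor $e_d$, which makes the two contributions linearly independent in the standard basis of $Z_d^0$.
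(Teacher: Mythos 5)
Your proof is correct and follows essentially the same route as the paper's. You expand $xe_d(i(a)+e_d) = -\sum_{j=1}^{d-1}a_j\,xe_je_d - (a_d+1)x$, observe that the first summand consists of $m$- and $(m+2)$-terms each containing $e_d$, note that the second summand is an $e_d$-free collection of $m$-terms, and conclude non-cancellation from the disjointness of supports; the paper does exactly this, merely writing $-xi(a')e_d - (1+a_d)x$ with $a' = a-(0,\ldots,0,a_d)$ in place of the expanded sum.
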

\begin{proof}
Let $a_d$ be the $d$'th coordinate of $a$ and let $a' = a -  (0,\ldots,0,a_d)$.
We have that
\[ xe_{d}(i(a)+e_{d}) = x\left((-1-a_{d})\BI +e_{d} i(a')\right) = -(1+a_{d})x - x i(a')e_{d}. \]

Since $xi(a') \in C\ell_{d-1}$ consists entirely of $(m-1)$-terms and $(m+1)$-terms, we have that $x i(a')e_{d}$ consists entirely of $m$-terms and $(m+2)$-terms, as desired.
Since no term of $x$ contains $e_{d}$ and every term of $x i(a')e_{d}$ contains $e_{d}$, if $a_d\neq -1$ then the $m$-terms from $-(1+a_{d})x$ are nonzero and do not get canceled by other terms.
\end{proof}

\begin{lemma}\label{le:TermsSize}
For a fixed even $m$, let $z \in Z_d^0 \setminus \{0\cdot \BI\}$ contain only $m$-terms and let $a,p \in \RR^d$.
Then $(\BI+\frac{1}{2}e_{d+1}e_{d+2}i(p))z (\BI-\frac{1}{2}e_{d+1}e_{d+2}i(a))$ contains only $m$-terms and $(m+2)$-terms.
This expression contains at least one nonzero $m$-term.
\end{lemma}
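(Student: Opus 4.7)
The plan is to decompose $z$ according to whether each of its $m$-terms uses the factor $e_{d+1}e_{d+2}$, then expand the triple product and kill most cross terms using $e_{d+2}^2=0$. Specifically, write
\[ z = z_1 + z_2\, e_{d+1}e_{d+2}, \]
where $z_1\in C\ell_d^0$ is the sum of those $m$-terms of $z$ that use only $e_1,\ldots,e_d$, and $z_2\in C\ell_d^1$ is a sum of $(m-1)$-fold products of distinct $e_j$'s ($j\le d$) such that $z_2\, e_{d+1}e_{d+2}$ accounts for the remaining $m$-terms of $z$.

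The defining ideal of $X_d$ says that $e_{d+2}$ commutes with every $e_j$ ($j\le d+1$), that $e_{d+1}$ anticommutes with every $e_j$ ($j\le d$), and that $e_{d+2}^2=0$ while $e_{d+1}^2 = -\BI$. From these I read off two facts used throughout: $(e_{d+1}e_{d+2})^2 = 0$, and $e_{d+1}e_{d+2}$ anticommutes with each $e_j$ for $j\le d$. Since $m$ is even, $z_1$ commutes with $e_{d+1}e_{d+2}$; since $m-1$ is odd, $z_2$ anticommutes with it. Now I would expand the triple product. Every summand containing two or three copies of $e_{d+1}e_{d+2}$ dies by $(e_{d+1}e_{d+2})^2=0$, so after moving $e_{d+1}e_{d+2}$ to the left the expression collapses to
\[ z_1 \;+\; z_2\, e_{d+1}e_{d+2} \;+\; \tfrac{1}{2}\, e_{d+1}e_{d+2}\bigl(i(p)z_1 - z_1\, i(a)\bigr). \]

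For the first claim, note that $i(p)z_1 - z_1\, i(a)$ sits in $C\ell_d$ and is a sum of products of grade $m-1$ or $m+1$ in the $e_j$'s; prepending $e_{d+1}e_{d+2}$ then produces a sum of $m$- and $(m+2)$-terms of $Z_d^0$. The pieces $z_1$ and $z_2\, e_{d+1}e_{d+2}$ are themselves $m$-terms of $Z_d^0$, so the whole expression has only $m$- and $(m+2)$-terms. For the second claim, I would split the $m$-terms of the output according to whether they contain the factor $e_{d+1}e_{d+2}$. The $m$-terms without $e_{d+1}e_{d+2}$ can come only from $z_1$, since the third summand always carries a factor of $e_{d+1}e_{d+2}$. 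Hence if $z_1\neq 0$ there is already a nonzero $m$-term in the output; and if $z_1=0$, then the third summand also vanishes, the output reduces to $z_2\, e_{d+1}e_{d+2}$, and since $z\neq 0$ we must have $z_2\neq 0$, giving a nonzero $m$-term of $Z_d^0$.

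The one subtlety to be careful about is the possible cancellation between the $m$-terms of $z_1$ and those of $\tfrac{1}{2}e_{d+1}e_{d+2}(i(p)z_1 - z_1\, i(a))$, which I expect to be the main (small) obstacle to check cleanly. The resolution is that the two groups live in disjoint parts of the natural basis of $Z_d^0$: one carries $e_{d+1}e_{d+2}$ as a tensor factor and the other does not, so linear independence of the $Z_d^0$ basis rules out any cancellation. Everything else is a direct calculation once the splitting of $z$ and the two relations $(e_{d+1}e_{d+2})^2=0$ and ``$e_{d+1}e_{d+2}$ anticommutes with $e_j$ for $j\le d$'' are in hand.
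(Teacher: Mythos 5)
Your proof is correct and follows essentially the same route as the paper: the same decomposition $z = z_1 + z_2\,e_{d+1}e_{d+2}$, the same reduction of the triple product (via $(e_{d+1}e_{d+2})^2 = 0$) to $z + \tfrac{1}{2}e_{d+1}e_{d+2}(i(p)z_1 - z_1\,i(a))$, and the same case split on whether $z_1 = 0$, using that the $z_1$ component cannot be cancelled because it occupies basis elements not involving $e_{d+1}e_{d+2}$. The only difference is that you spell out the underlying sign and nilpotency relations explicitly, which the paper leaves to the reader.
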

\begin{proof}
We write $z = z_1 + z_2e_{d+1}e_{d+2}$ where $z_1 \in C\ell_d^0$ and $z_2 \in C\ell_d^1$.
We then have
\[ \left(\BI+\frac{1}{2}e_{d+1}e_{d+2}i(p)\right) z \left(\BI-\frac{1}{2}e_{d+1}e_{d+2}i(a)\right) = z + \frac{1}{2}e_{d+1}e_{d+2}\left(i(p)z_1-z_1i(a)\right). \]

We observe that both $i(p)z_1$ and $z_1i(a)$ contain only $(m+1)$-terms and $(m-1)$-terms, and do not contain $e_{d+1}e_{d+2}$.
This implies that $\frac{1}{2}e_{d+1}e_{d+2}\left(i(p)z_1-z_1i(a)\right)$ contains only $(m+2)$-terms and $m$-terms.
Additionally, the part of $z + \frac{1}{2}e_{d+1}e_{d+2}\left(i(p)z_1-z_1i(a)\right)$ that does not involve $e_{d+1}e_{d+2}$ is exactly $z_1$.
Thus, if $z_1 \neq 0\cdot \BI$ then we have at least one $m$-term.
If $z_1 = 0\cdot \BI$ then $z + \frac{1}{2}e_{d+1}e_{d+2}\left(i(p)z_1-z_1i(a)\right) = z$, and we again have an $m$-term.
\end{proof}

\subsection{Proof of Theorem \ref{th:DDreduction}.}
Let $\pts$ be a set of $n$ points in $\RR^d$.
Let $D$ denote the number of distinct distances that are spanned by $\pts$ and denote these distances as $\delta_1,\ldots, \delta_D$.
We set
\begin{equation*} 
Q = \left\{(a,b,p,q)\in \pts^4: |ab| = |pq|>0 \right\}.
\end{equation*}
The quadruples of $Q$ are ordered, so $(a,b,p,q)$ and $(b,a,p,q)$ are considered
as two distinct elements of $Q$.
Our proof is based on double counting $|Q|$.

For every $j \in \{1,\ldots,D\}$, let $E_j = \{(a,b)\in \pts^2 : |ab|=\delta_j \}$.
Since every ordered pair of distinct points $(a,b)\in \pts^2$ appears in exactly one set $E_j$, we have that $\sum_{j=1}^D |E_j| = n^2-n  > n^2/2$.
The Cauchy-Schwarz inequality implies
\begin{equation} \label{eq:lowerQRd}
|Q| = \sum_{j=1}^D |E_j|^2 \ge \frac{1}{D}\left(\sum_{j=1}^D |E_j| \right)^2 > \frac{n^4}{4D}.
\end{equation}

For $a,b,p,q\in \RR^d$ with $a\neq b$, we have $|ab|=|pq|$ if and only if there exists a proper rigid motion in $\SE{d}$ that takes both $a$ to $p$ and $b$ to $q$.
Thus, for every $(a,p)\in \pts^2$ we set
\[ R_{ap} = \{\gamma \in \SE{d} :\, a^\gamma = p\}. \]

To derive an upper bound for $|Q|$ it suffices to bound the number of quadruples $(a,b,p,q)\in \pts^4$ that satisfy $a\neq b$ and $R_{ap}\cap R_{bq} \neq \emptyset$.
Since it would be simpler to work in $\Spun{d}$ rather than in $\SE{d}$, we recall the following definition from \eqref{eq:TapDef}.
\[ T_{ap} = \{x \in \Spun{d} :\, a^x = p\} = \rho_d^{-1}(R_{ap}). \]

\parag{From $\Spun{d}$ to $\RR^{\binom{d+1}{2}}$.}
In Section \ref{sec:DDR3} we studied the bijection $\eta$ from the set of points of $\Spun{3}$ that have a positive $x_1$-coordinate to $\RR^6$.
We now generalize this bijection to the case of $\Spun{d}$.
Let $\Spun{d}_+$ be the set of points of $\Spun{d}$ that have a positive first coordinate (the coordinate that corresponds to the coefficient of $\BI$).

Let $\pi_{1}: \RR^{2^d} \to \RR^{2^d-1}$ be the projection defined by $\pi_1(x_1,x_2,...,x_{2^d}) = (x_2,...,x_{2^d})$.
Let $H_0$ denote the hyperplane in $\RR^{2^d}$ defined by $x_1=0$ and let $H_1$ denote the hyperplane defined by $x_1=1$.
For each $x\in \RR^{2^d}\setminus H_0$ there exists a unique $\lambda_x \in \RR$ such that the $x_1$-coordinate of $\lambda_x x$ is 1.
We define $\pi: \RR^{2^d}\setminus H_0 \to \RR^{2^d-1}$ as $\pi(x) = \pi_1(\lambda_x x)$.
We think of elements of $\RR^{2^d-1}$ as corresponding to elements of $Z_d^0$, except for the coefficient of $\BI$ (which was removed by $\pi_1$).
Let $\pi':\RR^{2^d-1} \to \RR^{{d+1}\choose{2}}$ be the projection that keeps only the $\binom{d+1}{2}$ coordinates corresponding to 2-terms of $Z_d^0$.
We will see that we do not lose information of elements of $\Spun{d}_+$ by keeping only these coordinates.
Finally, let $\eta_d = \pi'\circ \pi_1$.
Note that $\eta_3$ is indeed the map $\eta$ from Section \ref{sec:DDR3}.

We first claim that the restriction of $\pi_1$ to $\Spun{d}_+$ is injective.
Indeed, assume that $\pi_1(x) = y$ for $x\in \Spun{d}_+$ and write $y= (y_2,...,y_{2^d})\in \RR^{2^d-1}$.
This implies that $\lambda_x x = (1,y_2,...,y_{2^d})$.
Since $x\in \Spun{d}_+$, we have that $N(x)=x\overline{x} = \BI$ and thus $N(\lambda_x x) = \lambda_x^2 \cdot \BI$.
That is, the value of $\lambda_x$ is determined up to a sign by $N(\lambda_x x)$.
This sign has to be positive, since the first coordinate of $x$ must be positive.
We conclude that for every $y\in \RR^{2^d-1}$ there exists at most one $x\in \Spun{d}_+$ such that $\pi_1(x)= y$.

Set
\begin{align*}
G_d &= \{r \cdot \gamma \in C\ell_d^0 :\ r\in \RR\setminus\{0\} \text{ and } \gamma\in \Spin{d}\}, \\
J_d &= \{r \cdot x \in Z_d^0 :\ r\in \RR\setminus\{0\} \text{ and } x\in \Spun{d}\}.
\end{align*}
Note that $G_d$ is a group under the product operation of $C\ell_d^0$.
Similarly, $J_d$ is a group under the product operation of $Z_d^0$.
By studying these groups, we will obtain information about $\eta_d$ and about the structure of $\Spun{d}$.

The following lemma provides a consistent form for writing elements of $G_d$.
Below we will rely on this lemma to prove various claims by induction on $d$.

\begin{lemma} \label{le:GJReps}
(a) For every element $g \in G_{d}$ there exists $h\in G_{d-1}$ that satisfies the following.
Either $g= he_{d-1}e_{d}$ or there exists $u\in \SSS^{d-1}\setminus \{i^{-1}(-e_{d})\}$ such that $g = h\left(e_{d}i(u)-\BI\right)$. \\
(b) For every $z \in J_d$ there exist $v\in \RR^d$ and $g\in G_d$ such that $z = g \left(\BI- \frac{1}{2}e_{d+1}e_{d+2}i(v)\right)$.
\end{lemma}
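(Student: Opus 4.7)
Part (b) is immediate from Lemma \ref{le:SpunDesc}. Given $z \in J_d$, write $z = s\cdot x$ with $s \in \RR\setminus\{0\}$ and $x \in \Spun{d}$; that lemma provides $\gamma \in \Spin{d}$ and $v_0 \in \RR^d$ with $x = \gamma(\BI + e_{d+1}e_{d+2}i(v_0))$. Taking $g := s\gamma \in G_d$ and $v := -2v_0$ gives the desired factorization $z = g\left(\BI - \frac{1}{2}e_{d+1}e_{d+2}i(v)\right)$.

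For part (a), the plan is to write $g = r\gamma$ with $r\in\RR\setminus\{0\}$ and $\gamma\in\Spin{d}$, let $\epsilon := i^{-1}(e_d)\in\RR^d$ denote the $d$-th standard basis vector, and set $w := \rho(\gamma)(\epsilon)\in\SSS^{d-1}$ via the double cover $\rho:\Spin{d}\to\SO{d}$ (which extends to $G_d$ by $\rho(r\gamma):=\rho(\gamma)$). A key preliminary is the identification of the stabilizer of $\epsilon$ in $\Spin{d}$ under $\rho$ as precisely $\Spin{d-1} = \Spin{d}\cap C\ell_{d-1}^0$: writing any $\gamma' = \gamma_0' + \gamma_1' e_d \in C\ell_d^0$ with $\gamma_0'\in C\ell_{d-1}^0$ and $\gamma_1'\in C\ell_{d-1}^1$, the equation $\rho(\gamma')(\epsilon) = \epsilon$ is equivalent to $\gamma' e_d = e_d \gamma'$, which forces $\gamma_1' = 0$; the Spin commutator condition on $\gamma'\in\Spin{d}$ then restricts on vectors in $\RR^{d-1}$ to the defining condition of $\Spin{d-1}$. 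Consequently, any $h\in G_d$ whose $\rho$-image fixes $\epsilon$ lies in $G_{d-1}$.

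The argument splits on whether $w = -\epsilon$. Case A ($w = -\epsilon$): a direct Clifford computation using Lemma \ref{le:AntiCommute} yields $\rho(e_{d-1}e_d)(\epsilon) = -\epsilon$, so $\rho(\gamma(e_{d-1}e_d)^{-1})$ fixes $\epsilon$; setting $h := r\gamma(e_{d-1}e_d)^{-1} \in G_{d-1}$ produces $g = h\,e_{d-1}e_d$. Case B ($w \neq -\epsilon$): set $u := \rho(\gamma)^{-1}(\epsilon)\in\SSS^{d-1}$; then $u \neq -\epsilon$, since otherwise $\rho(\gamma)(-\epsilon)=\epsilon$ would force $w=-\epsilon$. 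I then verify two algebraic claims. (i) $e_d i(u) - \BI \in G_d$: this follows from the factorization $e_d i(u) - \BI = i(\epsilon)\bigl(i(u)+i(\epsilon)\bigr) = \|u+\epsilon\|\cdot i(\epsilon)\,i(u_+)$, where $u_+ := (u+\epsilon)/\|u+\epsilon\|$, together with the observation that $i(\epsilon)\,i(u_+)\in\Spin{d}$ as a product of two unit 1-vectors. (ii) $\rho(e_d i(u) - \BI)(u) = \epsilon$: this reduces via the Clifford identity $i(a)i(b)i(a) = i(\|a\|^2 b - 2(a\cdot b)a)$ to computing the inner conjugation $i(u_+)i(u)i(u_+) = i\bigl(u - 2(u\cdot u_+)u_+\bigr) = i(-\epsilon)$ (the cancellation $2(u\cdot u_+)u_+ = u+\epsilon$ following from $\|u+\epsilon\|^2 = 2(1+u_d)$), and then the outer conjugation by $i(\epsilon)$ leaves $i(\epsilon)$ invariant. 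Given (i) and (ii), setting $h := g(e_d i(u)-\BI)^{-1}\in G_d$ gives $\rho(h)(\epsilon) = \rho(\gamma)(u) = \epsilon$, so by the preliminary $h \in G_{d-1}$ and $g = h(e_d i(u)-\BI)$.

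The principal technical point to track carefully is the orientation in sub-claim (ii): the element $e_d i(u) - \BI$ implements the rotation taking $u \mapsto \epsilon$, not $\epsilon \mapsto u$, which is why one must select $u = \rho(\gamma)^{-1}(\epsilon)$ rather than $u = w$. Once the correct inverse is used, the remaining Clifford bookkeeping is routine.
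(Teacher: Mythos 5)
Your proof is correct and follows essentially the same route as the paper's: the same choice $u = \rho(g)^{-1}(\epsilon)$, the same case split on whether $u = -\epsilon$ (which you phrase equivalently as $w = -\epsilon$), the same factorization $e_d i(u) - \BI = \|u+\epsilon\|\,i(\epsilon)\,i(u_+)$ as a scalar times a product of two unit vectors, the same Clifford conjugation verifying this element sends $u$ to $\epsilon$, and the same appeal to the stabilizer of $\epsilon$ being $G_{d-1}$. You make the stabilizer identification more explicit and package the conjugation via the identity $i(a)i(b)i(a) = i(\|a\|^2 b - 2(a\cdot b)a)$ rather than the paper's direct computation, but these are presentational variants of the same argument.
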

\begin{proof}
(a) By definition, for every $g \in G_{d}$ there exists $r\in \RR\setminus\{0\}$ such that $g/r \in \Spin{d}$.
This implies that $(g/r)^{-1} = \overline{g/r}$, so $(g/r) \overline{(g/r)} = 1$.
That is, $g^{-1} =\overline{g}/r^2$.
We define the group action of $g$ on $v\in \RR^{d}$ to be
\[ i^{-1}(g i(v) g^{-1}) = i^{-1}\left(\frac{g}{r} i(v) \overline{\left(\frac{g}{r}\right)}\right) = i^{-1}\left(\frac{g}{r} i(v) \left(\frac{g}{r}\right)^{-1}\right). \]
Since this is the action of $g/r \in \Spin{d}$ on $v$, it is a rotation of $\SO{d}$.
Thus, the action of $g$ maps some point $u\in \SSS^{d-1}$ to $i^{-1}(e_{d})$.

We first assume that $u\neq i^{-1}(-e_{d})$.
We write $s=\|u+(0,\ldots,0,1)\|$ and note that $s\neq 0$.
Since $i^{-1}(e_{d}),\frac{u + i^{-1}(e_{d})}{s} \in \SSS^{d-1}$, we get that $x = e_{d}\left(e_{d}+i(u)\right)/s \in \Spin{d}$.
Since $u\in \SSS^{d-1}$, we note that the vectors $u+i^{-1}(e_{d}),u-i^{-1}(e_{d}) \in \RR^d$ are orthogonal.
By Lemma \ref{le:AntiCommute} we have
\begin{align*}
xi(u)x^{-1} &= \frac{e_d\left( e_d+ i(u)\right) i(u) \left( e_d+ i(u)\right)e_d}{s^2} = \frac{e_d\left( i(u)+ e_d\right) \left(\frac{i(u)+e_d}{2} + \frac{i(u)-e_d}{2}\right) \left( e_d+ i(u)\right)e_d}{s^2} \\[2mm]
&= \frac{e_d\left( i(u)+ e_d\right)^2 \left((i(u)+e_d) - (i(u)-e_d)\right)e_d}{2s^2} = \frac{- e_d \cdot 2e_d \cdot e_d}{2} = e_d.
\end{align*}

The above implies that $gx^{-1}$ is in the stabilizer of $i^{-1}(e_{d})$.
We observe that the stabilizer of $i^{-1}(e_{d})$ is $G_{d-1}$.
Setting $h = (g\cdot x^{-1}/s) \in G_{d-1}$, we get that
\[ g = g x^{-1} x = h e_d\left( e_d+ i(u)\right) = h(e_{d}i(u)-\BI). \]

The above completes the proof of the case where $u\neq i^{-1}(-e_{d})$.
We now assume that $u= i^{-1}(-e_{d})$.
That is, that $g e_d g^{-1} = -e_d$.
Let $h= - ge_{d-1} e_d$ and note that $h^{-1} = - e_d e_{d-1}g^{-1}$.
This implies that $h e_d h^{-1} = e_d$.
As before, since $h$ is in the stabilizer of $e_d$ we have $h\in G_{d-1}$.
We get that $g = - g e_{d-1}e_de_{d-1}e_d = he_{d-1}e_d$, as asserted.

(b) Since $z \in J_{d}$, there exists $r\in \RR$ such that $z/r \in \Spun{d}$.
By Lemma \ref{le:SpunDesc}, there exist $\gamma\in \Spin{d}$ and $u\in \RR^d$ such that $z/r = \gamma\left(\BI+e_{d+2}e_{d+1}i(u)\right)$.
The assertion of the lemma is obtained by setting $g = r \gamma$ and $v=-2u$.
\end{proof}

The following two lemmas will help us to show that the restriction of $\eta_d$ to $\Spun{d}_+$ is injective.

\begin{lemma} \label{le:GdTwoVectors}
If $g,g'\in G_{d}$ have the same nonzero first coordinate and the same 2-terms, then $g = g'$.
\end{lemma}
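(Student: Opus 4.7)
The plan is to prove this by induction on $d$, using Lemma~\ref{le:GJReps}(a) to reduce each element of $G_d$ to a pair $(h,u) \in G_{d-1}\times \SSS^{d-1}$. The base cases $d \le 3$ are immediate: for $d \le 3$ the algebra $C\ell_d^0$ is spanned entirely by the $0$-term $\BI$ and the $2$-terms, so the $0$-term and $2$-terms already determine every element.

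For the inductive step, suppose the claim holds in dimension $d-1$ and let $g,g' \in G_d$ have the same nonzero first coordinate $g_0$ and the same $2$-terms. By Lemma~\ref{le:GJReps}(a), each of $g,g'$ is either of the form $he_{d-1}e_d$ for some $h \in G_{d-1}$, or of the form $h(e_d i(u)-\BI)$ with $h \in G_{d-1}$ and $u \in \SSS^{d-1}\setminus\{-i^{-1}(e_d)\}$. In the first form every term contains $e_d$, so the first coordinate is zero; since $g_0 \neq 0$, both $g$ and $g'$ must be of the second form. Write $g = h(e_d i(u)-\BI)$ and $g' = h'(e_d i(u')-\BI)$, and decompose $h = h_0\BI + h_2 + h_4 + \cdots$ by term size, with $h_2$ encoded by a skew-symmetric matrix $A \in \RR^{(d-1)\times(d-1)}$.

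Expanding with $e_d i(u)-\BI = -(u_d+1)\BI - \sum_{j<d} u_j\, e_je_d$ and setting $\tilde u = (u_1,\ldots,u_{d-1})$, a direct calculation shows that the first coordinate of $g$ equals $-(u_d+1)h_0$, that the coefficient of $e_ae_b$ for $a<b<d$ equals $-(u_d+1)A_{ab}$, and that the coefficient of $e_je_d$ equals $((A-h_0 I)\tilde u)_j$. Higher terms $h_4, h_6,\ldots$ of $h$ contribute only to coefficients of $4$-terms or larger of $g$, and so do not enter this system. Letting $c = u_d+1 \in (0,2]$, letting $v$ be the vector of $e_je_d$-coefficients of $g$, and letting $G_2$ be the skew matrix of the non-$e_d$ $2$-term coefficients of $g$, these relations give $h_0 = -g_0/c$, $A = -G_2/c$, and $(g_0I-G_2)\tilde u = cv$. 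Since $G_2$ is real skew-symmetric, every eigenvalue of $g_0I - G_2$ has real part $g_0\neq 0$, so the matrix is invertible and $\tilde u = c(g_0I-G_2)^{-1}v$. The spherical constraint $\|\tilde u\|^2 + (c-1)^2 = 1$ then uniquely determines $c = 2/(\|(g_0I-G_2)^{-1}v\|^2+1)$, and hence uniquely determines $u$ as well as the $0$- and $2$-term parts of $h$. Since $h_0 \neq 0$, the inductive hypothesis applied in $G_{d-1}$ yields $h = h'$, and therefore $g = g'$.

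The main obstacle is organizing the expansion in the third paragraph cleanly so that the higher even parts $h_4,h_6,\ldots$ demonstrably do not contaminate the $0$-term or $2$-term coefficients of $g$; this degree-separation is what permits the induction to close. The invertibility of $g_0I-G_2$, while elementary once phrased in terms of the spectrum of a real skew-symmetric matrix, is the other critical ingredient, since without it the parameter $c$ (and then $u$, $h_0$, $A$) could not be recovered from the allowed data.
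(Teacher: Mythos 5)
Your proof is correct and follows essentially the same route as the paper: induction via the factorization of Lemma~\ref{le:GJReps}(a), extracting $h_0$, $h_2$, and $\tilde u$ from the $0$-term, the non-$e_d$ $2$-terms, and the $e_je_d$ $2$-terms of $g$, with invertibility of the ``real scalar plus skew-symmetric'' matrix and the constraint $\|u\|=1$ closing the argument. The only difference is presentational — you solve directly for $(c, h_0, A, \tilde u)$ as functions of $(g_0, G_2, v)$, whereas the paper compares $g$ and $g'$ through a ratio $\ell = (1+u_d)/(1+u'_d)$ and then shows $\ell=1$ — but the underlying linear algebra is identical.
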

\begin{proof}
We prove the lemma by induction on $d$.
For the induction basis, note that the claim is trivial when $d\le 3$.
We now assume that the claim holds for $G_{d-1}$ and prove it for $G_d$.

Consider $g,g' \in G_{d}$ that satisfy the assumption of the lemma.
As in the proof of Lemma \ref{le:GJReps}(a), if $g(-e_{d})g^{-1} = i^{-1}(e_{d})$ then there exists $h\in G_{d-1}$ such that $g = h e_{d}e_{d-1}$.
This contradicts $g$ having a nonzero first coordinate, so we must have $g(-e_{d})g^{-1} \neq i^{-1}(e_{d})$.
A symmetric argument implies that $g'(-e_{d})(g')^{-1} \neq i^{-1}(e_{d})$.
By Lemma \ref{le:GJReps}(a), there exist $h,h' \in G_{d-1}$ and $u,u' \in S^{d-1}\setminus \{-e_{d}\}$ such that $g = h\left(e_{d}i(u)-\BI\right)$ and $g' = h'\left(e_{d}i(u')-\BI\right)$.

We write $h = r\cdot \BI + h_{2}+h_+$ such that $r\in \RR$, every term of $h_2$ is a 2-term, and $h_+$ contains no 0-term and 2-terms.
That is, we have
\[ g = h\left(e_{d}i(u)-\BI\right)  = h e_{d}i(u) - r\cdot \BI - h_{2}-h_+. \]

Let $u_{j}$ be the $j$'th coordinate of $u$, and set $u_* = u - (0,\ldots,0,u_{d})$.
Then
\[ g= h e_{d}i(u_*) - r(1+u_{d})\cdot \BI - h_{2}(1+u_{d})-h_+(1+u_{d}). \]
A symmetric argument gives
\[ g'= h' e_{d}i((u')_*) - r'(1+u'_{d})\cdot \BI - h'_{2}(1+u'_{d})-h'_+(1+u'_{d}). \]

By the assumption on $u$ and $u'$, we have that $u_{d}\neq -1$ and $u'_{d}\neq -1$.
Since $g$ and $g'$ have nonzero first coordinates, we have that $r \neq 0$ and $r' \neq 0$.
Since these first coordinates are identical, $r(1+u_{d}) = r'(1+u'_{d})$.
By the assumption on the 2-terms of $g$ and $g'$, we have that $(1+u_{d})h_{2}= (1+u'_{d})h'_{2}$ (the expressions $h e_{d}i(u_*)$ and $h' e_{d}i((u')_*)$ may also contain 2-terms, but these all involve $e_d$ and thus do not affect the terms of $h_{2},h'_{2}\in C\ell_{d-1}^0$).

By setting $\ell = (1+u_{d})/(1+u'_{d})$ we get that $r' = \ell r \neq 0$ and $h'_{2} = \ell h_{2}$.
We may thus apply the induction hypothesis on $h,\ell h' \in G_{d-1}$, to obtain that $h' = \ell h$.
That is,
\begin{align*}
g = h e_{d}i(u_*) - h(1+u_d) \quad \text{ and } \quad g' = \ell h e_{d}i((u')_*) - \ell h(1+u'_{d}).
\end{align*}

We write $h_{2} = \sum_{1\le j<k\le d-1} \lambda_{j,k}e_je_k$, where the coefficients $\lambda_{j,k}$ are in $\RR$.
Consider the terms of the form $e_je_d$ for some $1\le j \le d-1$.
By the assumption about 2-terms in $g$ and $g'$, we have
\begin{align*}
r e_{d} i(u_*) + \sum_{1\le j<k\le d-1} \lambda_{j,k}e_j&e_k(u_{j}e_de_j + u_{k}e_de_k)\\
& = \ell r_x e_{d} i((u')_*) + \ell \sum_{1\le j<k\le d-1} \lambda_{j,k}e_je_k(u'_{j}e_de_j + u'_{k}e_de_k).
\end{align*}
Simplifying, we have
\begin{align*}
r e_{d} i(u_*) + \sum_{1\le j<k\le d-1} \lambda_{j,k}&(-u_{j}e_ke_d + u_{k}e_je_d)\\
& = \ell r e_{d} i((u')_*) + \ell \sum_{1\le j<k\le d-1} \lambda_{j,k}(-u'_{j}e_ke_d + u'_{k}e_je_d).
\end{align*}

This leads to the following system of linear equations.
\begin{align*}
\begin{pmatrix}
r & \lambda_{1,2} & \lambda_{1,3} & \cdots & \lambda_{1,d-1}\\
-\lambda_{1,2} & r & \lambda_{2,3} & \cdots & \lambda_{2,d-1}\\
-\lambda_{1,3} & -\lambda_{2,3}& r & \cdots & \lambda_{3,d-1}\\
\vdots & \vdots & \vdots& \ddots & \vdots\\
-\lambda_{1,d-1} & -\lambda_{2,d-1} & -\lambda_{3,d-1} & \cdots & r
\end{pmatrix}
\begin{pmatrix} u_{1} - \ell u'_{1} \\ u_{2} - \ell u'_{2} \\ u_{3} - \ell u'_{3} \\ \vdots \\ u_{d-1} - \ell u'_{d-1} \end{pmatrix}
= 0.
\end{align*}

After placing zeros in every cell of the main diagonal, the above matrix becomes skew-symmetric.
Recall that the eigenvalues of a skew-symmetric matrix are pure imaginary, and that adding a constant $c$ to every element of the main diagonal adds $c$ to every eigenvalue.
Since $r$ is a nonzero real number, we get that the above matrix has no zero eigenvalues, and is thus invertible.
This implies that the only solution to the above system is $u_{j} = \ell u'_{j}$ for every $1\le j \le d-1$.

By recalling that $\ell = (1+u_{d})/(1+u'_{d})$ we get
\[ u_{d}^2 = (\ell(1+u'_{d}) -1)^2 = \ell^2(1+2u'_{d} + (u'_{d})^2) - 2\ell(1+u'_{d}) +1. \]
Combining the above with $u,u' \in \SSS^{d-1}$ leads to
\begin{align*}
1 = \|u\|^2 = \sum_{j=1}^d u_{j}^2 =  \sum_{j=1}^{d-1} \ell^2 (u'_{j})^2 + \ell^2(1+2u'_{d} &+ (u'_{d})^2) - 2\ell(1+u'_{d}) +1 \\
&= 2\ell^2 +2\ell^2 u'_{d}  - 2\ell -2\ell u'_{d} +1.
\end{align*}

Tidying up the above gives $\ell +\ell u'_{d} = 1 + u'_{d}$, so $\ell = 1$.
We thus get that $h=h'$ and $u=u'$, and conclude that $g=g'$.
\end{proof}

\begin{lemma} \label{le:JdTwoVectorsRule}
If $x,y\in J_{d}$ have the same nonzero first coordinate and the same 2-terms, then $x = y$.
\end{lemma}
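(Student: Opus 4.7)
The plan is to apply Lemma~\ref{le:GJReps}(b) to put each of $x,y$ into a canonical form, then compare coefficients of $\BI$ and of the two flavours of 2-terms of $Z_d^0$ on either side. By Lemma~\ref{le:GJReps}(b), there exist $g_x, g_y \in G_d$ and $v_x, v_y \in \RR^d$ with
\begin{equation*}
x = g_x\!\left(\BI - \tfrac{1}{2}e_{d+1}e_{d+2}\,i(v_x)\right), \qquad y = g_y\!\left(\BI - \tfrac{1}{2}e_{d+1}e_{d+2}\,i(v_y)\right).
\end{equation*}
Since $e_{d+1}e_{d+2}$ commutes with every element of $C\ell_d^0$, this rewrites as $x = g_x - \tfrac{1}{2}e_{d+1}e_{d+2}\,g_x\,i(v_x)$ and analogously for $y$. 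It suffices to show $g_x = g_y$ and $v_x = v_y$.

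Decompose $g_x = r_x\,\BI + (g_x)_2 + (g_x)_+$, where $(g_x)_2$ is the 2-term part and $(g_x)_+$ contains only $m$-terms with $m\ge 4$. The 2-terms of $Z_d^0$ fall into two disjoint families: \emph{interior} 2-terms $e_j e_k$ with $1\le j<k\le d$, and \emph{mixed} 2-terms $e_j\,e_{d+1}e_{d+2}$ with $1\le j\le d$. In the expansion of $x$, the interior 2-terms can arise only from $(g_x)_2$, since every term of $-\tfrac{1}{2}e_{d+1}e_{d+2}\,g_x\,i(v_x)$ carries the factor $e_{d+1}e_{d+2}$. The mixed 2-terms arise only as $-\tfrac{1}{2}e_{d+1}e_{d+2}$ times the 1-term part (in $C\ell_d$) of $g_x\,i(v_x)$; the higher-order summand $(g_x)_+\,i(v_x)$ contributes only $m$-terms with $m\ge 3$ and so contributes no 1-terms. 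Likewise, the 0-term coefficient of $x$ equals $r_x$.

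Equating 0-terms and interior 2-terms of $x$ and $y$ yields $r_x = r_y \neq 0$ and $(g_x)_2 = (g_y)_2$, so Lemma~\ref{le:GdTwoVectors} forces $g_x = g_y$. Setting $w = v_x - v_y$, equating the mixed 2-terms reduces to showing that the 1-term part of $g_x\,i(w)$ vanishes only for $w = 0$. A direct expansion using the reductions $e_j e_k e_j = e_k$ and $e_j e_k e_k = -e_j$ shows this 1-term part equals
\begin{equation*}
r_x\,i(w) + \sum_{1\le j<k\le d} \lambda_{j,k}\bigl(w_j e_k - w_k e_j\bigr),
\end{equation*}
where we write $(g_x)_2 = \sum_{j<k}\lambda_{j,k}e_j e_k$. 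In coordinates this is $i\bigl((r_x I + A)w\bigr)$ for a skew-symmetric matrix $A$ built from the $\lambda_{j,k}$. As in the proof of Lemma~\ref{le:GdTwoVectors}, skew-symmetric matrices have purely imaginary eigenvalues, so $r_x I + A$ is invertible when $r_x\neq 0$. Hence $w = 0$, so $v_x = v_y$, and therefore $x = y$.

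The only real obstacle is the careful bookkeeping that cleanly separates interior from mixed 2-terms in the expansion of $x$ and identifies exactly which components of $g_x$ and $v_x$ they see. Once that is in place, the argument reduces to Lemma~\ref{le:GdTwoVectors} for the interior part and to the same skew-symmetric invertibility trick used in its proof for the mixed part.
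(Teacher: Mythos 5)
Your argument is correct and follows the same route as the paper: invoke Lemma~\ref{le:GJReps}(b) to put $x,y$ in canonical form, compare the $\BI$-coefficient and interior 2-terms to force $g_x = g_y$ via Lemma~\ref{le:GdTwoVectors}, then read off the mixed 2-terms as a linear system $(r_x I + A)w = 0$ with $A$ skew-symmetric and conclude $w=0$ by the same eigenvalue argument. Your bookkeeping is slightly more explicit (and your signs in the 1-term part of $g_x i(w)$ are the correct ones; the paper's ``simplifying'' step has an inconsequential sign slip), but the underlying approach is identical.
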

\begin{proof}
By Lemma \ref{le:GJReps}(b), there exist $g,h \in G_d$ and $u_x,u_y \in \RR^d$ such that $x = g \left(\BI- \frac{1}{2}e_{d+1}e_{d+2}i(u_x)\right)$ and $y = h \left(\BI- \frac{1}{2}e_{d+1}e_{d+2}i(u_y)\right)$.
We write $g = r_x\cdot \BI + g_2+g'$ where $r_x\in \RR$, every term of $g_2$ is a 2-term, and $g'$ contains no 0-term and no 2-terms.
We symmetrically write $h = r_y\cdot \BI + h_2 + h'$.
That is, we have
\begin{align*}
x &= g \left(\BI- \frac{1}{2}e_{d+1}e_{d+2}i(u_x)\right) = (r_x+g_2 + g') \left(\BI- \frac{1}{2}e_{d+1}e_{d+2}i(u_x)\right) \\
&\hspace{80mm} = r_x \cdot \BI + g_2 + g' - \frac{1}{2}g e_{d+1}e_{d+2}i(u_x),\\
y &= h \left(\BI- \frac{1}{2}e_{d+1}e_{d+2}i(u_y)\right) = (r_y+h_2 + h') \left(\BI- \frac{1}{2}e_{d+1}e_{d+2}i(u_y)\right) \\
&\hspace{80mm} = r_y \cdot \BI + h_2 + h' - \frac{1}{2}h e_{d+1}e_{d+2}i(u_y).
\end{align*}

Since $x$ and $y$ have the same first coordinate, we have that $r_x=r_y$.
Since $\eta_d(x)=\eta_d(y)$, we have $g_2=h_2$.
By lemma \ref{le:GdTwoVectors}, we get that $g = h$.
We thus have
\begin{align*}
x = g - \frac{1}{2}g e_{d+1} e_{d+2}i(u_x), \quad \text{ and } \quad y = g- \frac{1}{2}g e_{d+1} e_{d+2}i(u_y).
\end{align*}

We write $g_2 = \sum_{1\le j<k\le d} \lambda_{j,k}e_je_k$, where the coefficients $\lambda_{j,k}$ are in $\RR$.
Also, let $u_{x,j}$ denote the $j$'th coordinate of $u_x$.
We now consider the terms of the form $e_{j}e_{d+1}e_{d+2}$ with $1 \le j \le d$.
Since $x$ and $y$ have the same 2-terms, we have
\begin{align*}
r_xe_{d+1}e_{d+2}i(u_x) + \sum_{1\le j<k\le d} &\lambda_{j,k}e_je_k(u_{x,j}e_{d+1}e_{d+2}e_j + u_{x,k}e_{d+1}e_{d+2}e_k)\\
&= r_xe_{d+1}e_{d+2}i(u_y) + \sum_{1\le j<k\le d} \lambda_{j,k}e_je_k(u_{y,j}e_{d+1}e_{d+2}e_j + u_{y,k}e_{d+1}e_{d+2}e_k).
\end{align*}
Simplifying, we have
\begin{align*}
r_xe_{d+1}e_{d+2}i(u_x) + \sum_{1\le j<k\le d} &\lambda_{j,k}(-u_{x,j}e_ke_{d+1}e_{d+2} + u_{x,k}e_je_{d+1}e_{d+2})\\
&= r_xe_{d+1}e_{d+2}i(u_y) + \sum_{1\le j<k\le d} \lambda_{j,k}(-u_{y,j}e_ke_{d+1}e_{d+2} + u_{y,k}e_je_{d+1}e_{d+2}).
\end{align*}

This leads to the following system of linear equations.
\begin{align*}
\begin{pmatrix}r_x &  \lambda_{1,2} &  \lambda_{1,3} & \cdots & \lambda_{1,d}\\
-\lambda_{1,2} & r_x & \lambda_{2,3} & \cdots & \lambda_{2,d}\\
-\lambda_{1,3} & -\lambda_{2,3}& r_x & \cdots & \lambda_{3,d}\\
\vdots & \vdots & \vdots& \ddots & \vdots\\
-\lambda_{1,d} & -\lambda_{2,d} & -\lambda_{3,d} & \cdots & r_x
\end{pmatrix}
\begin{pmatrix} u_{x,1} - u_{y,1} \\ u_{x,2} - u_{y,2} \\ u_{x,3} - u_{y,3} \\ \vdots \\ u_{x,d} - u_{y,d}\end{pmatrix}
= 0.
\end{align*}

By repeating the eigenvalues argument from the proof of Lemma \ref{le:GdTwoVectors}, we get that the only solution to this system is $u_{x,j} = u_{y,j}$ for every $1\le j\le d$.
Since $u_x=u_y$, we conclude that $x=y$.
\end{proof}

\begin{corollary} \label{co:EtaDInjective}
The restriction of $\eta_d$ to $\Spun{d}_+$ is injective.
\end{corollary}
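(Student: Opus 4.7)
The plan is to deduce this corollary almost directly from Lemma \ref{le:JdTwoVectorsRule}, which already does the heavy lifting. The key observation is that $\eta_d$ factors as: rescale so the first coordinate becomes $1$, then read off only the $\binom{d+1}{2}$ coordinates corresponding to $2$-terms. So injectivity on $\Spun{d}_+$ really asks: if two rescaled versions of elements of $\Spun{d}_+$ land in $J_d$ with first coordinate $1$ and agree on all $2$-terms, must the original elements coincide?

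More concretely, I would start by fixing $x,y\in\Spun{d}_+$ with $\eta_d(x)=\eta_d(y)$ and setting $x'=\lambda_x x$ and $y'=\lambda_y y$, where $\lambda_x,\lambda_y$ are the unique positive scalars making the first coordinates of $x',y'$ equal to $1$ (positivity is available because $x,y\in\Spun{d}_+$). Since $\Spun{d}\subseteq J_d$ and $J_d$ is closed under nonzero scaling, $x',y'\in J_d$. By construction both have the same nonzero first coordinate, and the assumption $\eta_d(x)=\eta_d(y)$ means exactly that $x'$ and $y'$ have identical $2$-terms. Lemma \ref{le:JdTwoVectorsRule} then forces $x'=y'$, i.e.\ $\lambda_x x=\lambda_y y$.

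To finish, I would take norms of both sides. Since $x,y\in\Spun{d}$ satisfy $N(x)=N(y)=\BI$, we get $\lambda_x^2\,\BI=N(\lambda_x x)=N(\lambda_y y)=\lambda_y^2\,\BI$, hence $\lambda_x^2=\lambda_y^2$; combined with $\lambda_x,\lambda_y>0$ this gives $\lambda_x=\lambda_y$ and therefore $x=y$, proving injectivity.

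Honestly, there is no real obstacle at this stage — all the work has already been absorbed into Lemma \ref{le:JdTwoVectorsRule} (and, through it, into Lemma \ref{le:GdTwoVectors} with its skew-symmetric-matrix eigenvalue argument). The only subtlety worth double-checking in writing out the proof is that both rescaling factors are genuinely positive, so that $\lambda_x^2=\lambda_y^2$ yields $\lambda_x=\lambda_y$ rather than only $\lambda_x=\pm\lambda_y$; this is precisely why the statement restricts to $\Spun{d}_+$ rather than to all of $\Spun{d}$, which contains the antipodal pair $\{\BI,-\BI\}$ of the double cover.
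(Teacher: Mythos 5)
Your proof is correct and follows essentially the same route as the paper: rescale $x,y\in\Spun{d}_+$ so the first coordinate is $1$, note they then lie in $J_d$ with matching $2$-terms, invoke Lemma \ref{le:JdTwoVectorsRule} to get $x'=y'$, and recover $x=y$ from uniqueness of the positive rescaling factor. Your norm computation $\lambda_x^2\BI = N(\lambda_x x)=N(\lambda_y y)=\lambda_y^2\BI$ is a slightly more explicit version of what the paper states as ``there is a unique $r\in\RR$ such that $r\cdot x'\in\Spun{d}_+$,'' and your aside about why positivity is needed (to rule out the antipodal pair $\{\BI,-\BI\}$) is exactly the right intuition.
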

\begin{proof}
Consider two elements $x,y\in \Spun{d}_+$ such that $\eta_d(x) = \eta_d(y)$.
Let $x_1$ be the first coordinate of $x$ and let $y_1$ be the first coordinate of $y$.
We set $x' = x/x_1$ and $y'=y/y'$, and note that $x',y' \in H_1$.
Moreover, we have that $\eta_d(x) = \eta_d(x') = \pi'\circ\pi_1(x')$ and $\eta_d(y) = \eta_d(y') = \pi'\circ\pi_1(y')$.
This also implies that $\eta_d(x') = \eta_d(y')$, which in turn implies that $x'$ and $y'$ have the same 2-terms.
Since $x'$ and $y'$ also have the same first coordinate, Lemma \ref{le:JdTwoVectorsRule} states that $x'=y'$.
By the definition of $\Spun{d}_+$, there is a unique $r\in \RR$ such that $r\cdot x' \in \Spun{d}_+$.
We thus conclude that $x=y$.
\end{proof}

We next show that the restriction of $\eta_d$ to $\Spun{d}_+$ is surjective in a similar manner.

\begin{lemma} \label{le:SpinSurject}
Consider $r\in \RR$ and elements $\lambda_{j,k} \in \RR$ for every $1\le j < k \le d$, such that $r\neq 0$.
Then there exists $g \in G_{d}$ such that the first coordinate of $g$ is $r$ and the coefficient of the term $e_je_k$ in $g$ is $\lambda_{j,k}$.
\end{lemma}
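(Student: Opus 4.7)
The plan is to prove this by induction on $d$, using the structural description of $G_d$ from Lemma~\ref{le:GJReps}(a). The base case $d=2$ is immediate: for any prescribed $r\neq 0$ and $\lambda_{1,2}$, the element $g = r\BI + \lambda_{1,2}e_1e_2$ satisfies $N(g)=(r^2+\lambda_{1,2}^2)\BI$, so $g$ is a positive real multiple of a $\Spin{2}$-element and hence lies in $G_2$.

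For the inductive step, I will look for $g$ of the second form from Lemma~\ref{le:GJReps}(a), namely $g = h(e_di(u)-\BI)$ with $h\in G_{d-1}$ and $u\in\SSS^{d-1}\setminus\{-i^{-1}(e_d)\}$ to be determined. Such a product always lies in $G_d$, because $e_d(e_d+i(u))$ is a positive real multiple of the Spin element $e_d\cdot i\bigl((i^{-1}(e_d)+u)/\|i^{-1}(e_d)+u\|\bigr)$ whenever $u\neq -i^{-1}(e_d)$. Writing $h = s\BI + h_2 + h_{\ge 4}$ with $h_2 = \sum_{j<k\le d-1}\mu_{j,k}e_je_k$, and expanding using $e_d^2=-\BI$ together with anticommutativity, one obtains $g = -(1+u_d)h - \sum_{j=1}^{d-1}u_j\,he_je_d$. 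Reading off coefficients gives: the $\BI$-coefficient of $g$ is $-(1+u_d)s$; for $j<k<d$, the $e_je_k$-coefficient is $-(1+u_d)\mu_{j,k}$ (the products $h_{\ge 4}e_je_d$ produce only terms of length $\ge 4$); and for $m<d$, the $e_me_d$-coefficient is $-su_m - \sum_{j=1}^{d-1}u_j\mu_{j,m}$, with $\mu$ extended antisymmetrically.

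Setting $t := 1+u_d$ and matching these to the prescribed $r$ and $\lambda_{j,k}$, the first two sets of equations force $s = -r/t$ and $\mu_{j,k} = -\lambda_{j,k}/t$ for $j<k<d$, while the third becomes a linear system $(rI - \Lambda')\vec u = t\,\vec\lambda_d$, where $\vec u = (u_1,\ldots,u_{d-1})$, $\vec\lambda_d = (\lambda_{m,d})_{m<d}$, and $\Lambda'$ is the antisymmetric $(d-1)\times(d-1)$ matrix with entries $\lambda_{j,k}$. Because $\Lambda'$ is antisymmetric, its eigenvalues are purely imaginary, so $rI-\Lambda'$ is invertible (as $r\neq 0$). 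Setting $\vec w := (rI-\Lambda')^{-1}\vec\lambda_d$, we get $\vec u = t\vec w$, and the constraint $u\in\SSS^{d-1}$, namely $\|\vec u\|^2+(t-1)^2=1$, simplifies to $t(1+\|\vec w\|^2)=2$, giving the unique positive solution $t = 2/(1+\|\vec w\|^2)$. Then $t>0$ ensures $u_d=t-1>-1$ (so $u\neq -i^{-1}(e_d)$), and $s=-r/t\neq 0$. Applying the inductive hypothesis in dimension $d-1$ with prescribed first coordinate $s$ and $2$-term coefficients $\mu_{j,k}$ produces the required $h\in G_{d-1}$, and $g := h(e_di(u)-\BI)\in G_d$ has the prescribed coefficients, completing the induction.

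The only step requiring genuine thought is the bookkeeping for the $e_me_d$-coefficients, where one must verify that contributions from $h_2\cdot e_je_d$ reduce (via $e_j^2=-\BI$) to an antisymmetric matrix-vector expression. Once that is done, the invertibility of $rI-\Lambda'$ is automatic from antisymmetry, and the existence of a valid $t>0$ is forced by a single quadratic identity, so no further obstacle arises.
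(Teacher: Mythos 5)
Your proof is correct, but it takes a more laborious route than the paper's. Both proceed by induction on $d$, building $g$ as a product of an element of $G_{d-1}$ and a factor of the form $e_d i(u)\pm\BI$, and both close the linear step by the same skew-symmetry/eigenvalue argument showing that $rI-\Lambda'$ is invertible. The difference is that you insist on the \emph{canonical} decomposition from Lemma~\ref{le:GJReps}(a), which forces $u\in\SSS^{d-1}$; this extra normalization introduces the free scalar $t=1+u_d$, rescales the data fed to the inductive hypothesis ($s=-r/t$, $\mu_{j,k}=-\lambda_{j,k}/t$), and produces a quadratic constraint $t(1+\|\vec w\|^2)=2$ that must be shown to have a valid positive solution. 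The paper instead takes $g = h - h\,e_d\,i(u)$ with $u\in\RR^{d-1}$ completely unconstrained: since $\BI-e_d i(u)=-e_d(e_d+i(u))$ is a nonzero real multiple of a product of two unit vectors in $i(\SSS^{d-1})$, it lies in $G_d$ for \emph{any} $u\in\RR^{d-1}$, not just those that make the factor a unit-norm Spin element. With that observation, $h$ can be chosen by induction to already have first coordinate $r$ and the correct 2-terms in $C\ell_{d-1}^0$ (no rescaling), the extra factor does not disturb those coefficients, and $u$ is determined by a purely linear system with the same matrix. Both proofs are valid; the paper's exploits that Lemma~\ref{le:GJReps}(a) is only needed for the injectivity argument (Lemma~\ref{le:GdTwoVectors}), where the canonical form matters, whereas here any representative in $G_d$ suffices.

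Two small points in your write-up. First, your base case at $d=2$ is fine (any nonzero $r\BI+\lambda_{1,2}e_1e_2$ has positive norm and lies in $\Spin{2}$ after rescaling by Corollary~\ref{co:SpinDefLowD}), though the paper starts at the even easier $d=1$. Second, the parenthetical ``the products $h_{\ge 4}e_je_d$ produce only terms of length $\ge 4$'' should really say that every such term contains $e_d$ (so it cannot contribute to $e_je_k$ with $k<d$); the length bound alone is not quite the relevant point, though it is also true and would suffice. Neither affects correctness.
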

\begin{proof}
We prove the lemma by induction on $d$.
For the induction basis, note that the claim is trivial when $d=1$.
For the induction step, we assume that the claim holds for $G_{d-1}$ and consider the case of $G_d$.

By the induction hypothesis, there exists $h \in G_{d-1}$ with first coordinate $r$ and the term $\lambda_{j,k}e_je_k$ for every $1\leq j <k \leq d-1$.
We set $g=h - he_{d}i(u) \in G_{d}$, for some $u\in \RR^{d-1}$ that will be determined below.
Note that the first coordinate of $g$ is $r$ and the coefficient of the term $e_je_k$ in $g$ is $\lambda_{j,k}$, for $1\le j < k \le d-1$.
We now consider the terms of the form $e_je_{d}$ in $g$, and observe that these are all in $-he_{d}i(u)$.
Let $u_{j}$ denote the $j$'th coordinate of $u$.
Since for every $1\le j \le d-1$ we would like $g$ to contain the term $\lambda_{j,d} e_je_{d}$, we get the following system of linear equations.
\begin{align*}
\begin{pmatrix}r & -\lambda_{1,2} & -\lambda_{1,3} & \cdots & -\lambda_{1,d-1}\\
\lambda_{1,2} & r & -\lambda_{2,3} & \cdots &-\lambda_{2,d-1}\\
\lambda_{1,3} & \lambda_{2,3}& r & \cdots & -\lambda_{3,d-1}\\
\vdots & \vdots & \vdots& \ddots & \vdots\\
\lambda_{1,d-1} & \lambda_{2,d-1} & \lambda_{3,d-1} & \cdots & r
\end{pmatrix}
\begin{pmatrix} u_1 \\ u_2 \\ u_3 \\ \vdots \\ u_{d-1} \end{pmatrix}
= \begin{pmatrix} \lambda_{1,d} \\ \lambda_{2,d} \\ \lambda_{3,d} \\ \vdots \\ \lambda_{d-1,d} \end{pmatrix}.
\end{align*}

By repeating the eigenvalues argument from the proof of Lemma \ref{le:GdTwoVectors}, we get that the above matrix is invertible. Thus, there exists a choice of $u_1, \ldots, u_{d-1}$ such that the above system holds.
That is, there exists $u\in \RR^{d-1}$ such that $g$ satisfies the assertion of the lemma.
\end{proof}

\begin{lemma} \label{le:JdSurjective}
Consider $r\in \RR$ and elements $\lambda_{j,k} \in \RR$ for every $1\le j < k \le d+1$, such that $r\neq 0$.
Then there exists $g \in J_{d}$ such that the first coordinate of $g$ is $r$ and the coefficient of the term $e_je_k$ in $g$ is $\lambda_{j,k}$ (when $k=d+1$ we consider the term $e_je_{d+1}e_{d+2}$ instead).
\end{lemma}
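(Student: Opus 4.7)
The plan is to mirror the proof of Lemma \ref{le:SpinSurject}, treating the new ``translational'' $2$-terms $e_je_{d+1}e_{d+2}$ the same way that proof treated the $e_je_d$ terms in the induction step. First I would invoke Lemma \ref{le:SpinSurject} to produce $h \in G_d$ whose first coordinate is $r$ and whose coefficient of $e_je_k$ equals $\lambda_{j,k}$ for every $1 \le j < k \le d$. By Lemma \ref{le:GJReps}(b), every element of $J_d$ can be written in the form $h'\bigl(\BI - \tfrac{1}{2}e_{d+1}e_{d+2}i(v)\bigr)$ with $h' \in G_d$ and $v \in \RR^d$, which suggests setting
\[
g \;=\; h\bigl(\BI - \tfrac{1}{2}e_{d+1}e_{d+2}i(v)\bigr) \;=\; h - \tfrac{1}{2}e_{d+1}e_{d+2}\,h\,i(v),
\]
using that $h \in C\ell_d^0$ commutes with $e_{d+1}$ and $e_{d+2}$, and then choosing $v$ to pin down the remaining coefficients.

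The summand $h$ contributes the correct first coordinate and correct $e_je_k$ coefficients for $1 \le j<k \le d$, and contributes nothing involving $e_{d+1}e_{d+2}$. The summand $-\tfrac{1}{2}e_{d+1}e_{d+2}\,h\,i(v)$ contributes only terms divisible by $e_{d+1}e_{d+2}$, so it does not disturb what has already been set up. Thus the entire matching problem reduces to the $d$ linear conditions
\[
\text{coefficient of } e_je_{d+1}e_{d+2} \text{ in } -\tfrac{1}{2}e_{d+1}e_{d+2}\,h\,i(v) \;=\; \lambda_{j,d+1} \qquad (1 \le j \le d).
\]

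To unpack these, I would note that only the $0$-term and $2$-terms of $h$ can contribute to the coefficient of $e_j$ in $hi(v)$ (terms of $h$ of degree $\ge 4$, multiplied by the degree-$1$ element $i(v)$, produce terms of degree $\ge 3$). Commuting $e_{d+1}e_{d+2}$ past $e_j$ picks up a sign, and the resulting linear system in $(v_1,\ldots,v_d)$ has coefficient matrix of the form $rI + A$, where $A$ is skew-symmetric with entries determined by the $\lambda_{j,k}$ with $1 \le j<k \le d$. This is exactly the family of matrices encountered in the proofs of Lemmas \ref{le:GdTwoVectors}, \ref{le:JdTwoVectorsRule}, and \ref{le:SpinSurject}: its eigenvalues are $r + ic$ for real $c$, so the assumption $r \neq 0$ makes it invertible, and a unique $v \in \RR^d$ exists. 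Plugging this $v$ back into the ansatz produces the required element $g \in J_d$.

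I do not anticipate a serious obstacle; the proof is essentially a one-step analogue of Lemma \ref{le:SpinSurject}, with the skew-symmetric-plus-scalar matrix argument reused verbatim. The only minor care point is verifying that the term $-\tfrac{1}{2}e_{d+1}e_{d+2}\,h\,i(v)$ does not bleed into the coefficients we already fixed (first coordinate and $e_je_k$ for $j,k \le d$), which is immediate because every term of this expression carries the factor $e_{d+1}e_{d+2}$.
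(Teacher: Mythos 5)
Your proposal is correct and follows essentially the same approach as the paper's proof: apply Lemma \ref{le:SpinSurject} to obtain $h \in G_d$ matching the required first coordinate and $e_je_k$ coefficients, then append a factor involving $e_{d+1}e_{d+2}i(v)$ and solve a linear system whose coefficient matrix is $r$ times the identity plus a skew-symmetric matrix built from the $\lambda_{j,k}$. The only cosmetic difference is that the paper writes $g = h - he_{d+1}e_{d+2}i(u)$ while you write $g = h - \tfrac{1}{2}e_{d+1}e_{d+2}\,h\,i(v)$; since $h \in C\ell_d^0$ commutes with $e_{d+1}e_{d+2}$, these agree up to the reparametrization $v = 2u$.
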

\begin{proof}
By lemma \ref{le:SpinSurject}, there exists $h \in G_d$ such that the first coordinate of $h$ is $r$ and the coefficient of the term $e_je_k$ is $\lambda_{j,k}$, for every $1\le j < k \le d$.
We set $ g= h  - h e_{d+1}e_{d+2}i(u) \in J_{d}$, for a vector $u\in \RR^{d}$ that will be determined below.
Note that the first coordinate of $g$ is $r$ and the coefficient of the term $e_je_k$ in $g$ is $\lambda_{j,k}$, for $1\le j < k \le d$.
We now consider the terms of the form $e_je_{d+1}e_{d+2}$ in $g$, and observe that these are all in $-h e_{d+1}e_{d+2}i(u)$.
Let $u_{j}$ denote the $j$'th coordinate of $u$.
Since for every $1\le j \le d$ we would like $g$ to contain the term $\lambda_{j,d+1} e_je_{d+1}e_{d+2}$, we get the following system of linear equations.

\begin{align*}
\begin{pmatrix}
r & -\lambda_{1,2} & -\lambda_{1,3} & \cdots & -\lambda_{1,d}\\
\lambda_{1,2} & r & -\lambda_{2,3} & \cdots & -\lambda_{2,d}\\
\lambda_{1,3} & \lambda_{2,3}& r & \cdots & -\lambda_{3,d}\\
\vdots & \vdots & \vdots& \ddots & \vdots\\
\lambda_{1,d} & \lambda_{2,d} & \lambda_{3,d} & \cdots & r
\end{pmatrix}
\begin{pmatrix} u_1 \\ u_2 \\ u_3 \\ \vdots \\ u_d \end{pmatrix}
= \begin{pmatrix} \lambda_{1,d+1} \\ \lambda_{2,d+1} \\ \lambda_{3,d+1} \\ \vdots \\ \lambda_{d,d+1} \end{pmatrix}.
\end{align*}

By repeating the eigenvalues argument from the proof of Lemma \ref{le:GdTwoVectors}, we get that the above matrix is invertible. Thus, there exists a choice of $u_1, \ldots, u_{d}$ such that the above system holds.
That is, there exists $u\in \RR^{d}$ such that $g$ satisfies the assertion of the lemma.
\end{proof}

\begin{theorem} \label{th:etaBijectRd}
The map $\eta_d:\Spun{d}_+ \to \RR^{\binom{d+1}{2}}$ is a bijection.
\end{theorem}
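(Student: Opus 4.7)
The plan is to combine Corollary \ref{co:EtaDInjective}, which already gives injectivity of $\eta_d|_{\Spun{d}_+}$, with Lemma \ref{le:JdSurjective} to establish surjectivity, together with a small sign normalization to land inside $\Spun{d}_+$ rather than merely in $\Spun{d}$.

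For surjectivity, I would fix an arbitrary target $v \in \RR^{\binom{d+1}{2}}$ and index its coordinates by the 2-terms of $Z_d^0$, namely by pairs $(j,k)$ with $1 \le j < k \le d$ (corresponding to $e_je_k$) and indices $1 \le j \le d$ (corresponding to $e_je_{d+1}e_{d+2}$). Call the prescribed coefficients $\lambda_{j,k}$. Applying Lemma \ref{le:JdSurjective} with $r=1$ and these $\lambda_{j,k}$ produces an element $g \in J_d$ whose first coordinate is $1$ and whose 2-term coefficients agree with $v$.

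Next, by the definition of $J_d$, I can write $g = s \cdot x$ for some $s \in \RR \setminus \{0\}$ and $x \in \Spun{d}$. Since $\Spun{d}$ is a group containing $-\BI$ (recall $\ker \rho = \{\pm\BI\}$ by Theorem \ref{th:SpunToSE}), I may replace the pair $(s,x)$ with $(-s,-x)$ if necessary, so without loss of generality $s > 0$. Then the first coordinate of $x$ equals $1/s > 0$, so $x \in \Spun{d}_+$. Unwinding the definition of $\eta_d$, the scaling constant $\lambda_x$ built into $\pi$ is exactly $s$, so $\pi(x) = \pi_1(s x) = \pi_1(g)$, and applying $\pi'$ retains precisely the 2-term coordinates, giving $\eta_d(x) = v$. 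Combined with Corollary \ref{co:EtaDInjective}, this proves $\eta_d|_{\Spun{d}_+}$ is a bijection.

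The hard work has in fact already been carried out in the preceding lemmas: the substantive content of surjectivity lives in Lemma \ref{le:JdSurjective}, whose proof hinges on the eigenvalue argument showing that a skew-symmetric matrix with a nonzero scalar added to its diagonal is invertible, and whose inductive construction $g = h - he_{d+1}e_{d+2}i(u)$ explicitly realizes any prescribed 2-terms. Relative to that, the only remaining subtlety is the sign normalization ensuring that the resulting element lies in $\Spun{d}_+$, and this is immediate once we recall that $\Spun{d}$ is closed under negation. I therefore do not anticipate any genuine obstacle at this step; it is essentially bookkeeping that ties together Lemmas \ref{le:JdTwoVectorsRule} and \ref{le:JdSurjective} into the global bijection.
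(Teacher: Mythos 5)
Your proof is correct and takes essentially the same route as the paper: injectivity via Corollary \ref{co:EtaDInjective} and surjectivity by invoking Lemma \ref{le:JdSurjective} to produce $g\in J_d$ with first coordinate $1$ and prescribed $2$-term coordinates, then rescaling into $\Spun{d}$. You are slightly more careful than the paper in making explicit the sign normalization (via $-\BI\in\Spun{d}$) ensuring the rescaled element lies in $\Spun{d}_+$ rather than merely in $\Spun{d}$, a detail the paper's terse proof leaves implicit.
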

\begin{proof}
By Corollary \ref{co:EtaDInjective} the restriction of $\eta_d$ to $\Spun{d}_+$ is injective.
It remains to show that this restriction is surjective on $\RR^{\binom{d+1}{2}}$.
Consider $v \in \RR^{{d+1}\choose{2}}$.
By Lemma \ref{le:JdSurjective}, there exists $g \in J_{d}$ such that $\eta_d(g) = v$ and the first coordinate of $g$ is 1.
By the definition of $J_d$, there exists $r\in \RR\setminus\{0\}$ such that $r g \in \Spun{d}$.
We have that $\eta_d(rg) = \eta_d(g) = v$.
Thus, the restriction of $\eta_d$ to $\Spun{d}_+$ is surjective on $\RR^{{d+1}\choose{2}}$.
\end{proof}

Now that we established that the restriction of $\eta_d$ to $\Spun{d}_+$ is a bijection, we move to study the image of $T_{ap}\cap \Spun{d}_+$ under $\eta_d$.
In particular, we will show that this image is a ${{d}\choose{2}}$-flat.
Similarly to $\Spun{d}_+$, let $\Spin{d}_+$ be the set of elements of $\Spin{d}$ where the term $\BI$ has a positive coefficient.
We also recall the definition of $F_{ap}$ from \eqref{eq:FapDefD}.

\begin{lemma} \label{le:TapProjection}
For $a,p\in \RR^d$, we have $\eta_{d}(T_{ap} \cap \Spun{d}_+) =  \eta_d\left(F_{ap} \setminus H_0 \right)$.
\end{lemma}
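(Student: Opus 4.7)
The plan is to prove the two inclusions separately.

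The forward inclusion $\eta_d(T_{ap}\cap\Spun{d}_+)\subseteq\eta_d(F_{ap}\setminus H_0)$ is immediate: Lemma \ref{le:TapStructureRd} gives $T_{ap}=F_{ap}\cap\Spun{d}\subseteq F_{ap}$, and every element of $\Spun{d}_+$ has positive first coordinate, so it lies outside $H_0$.

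For the reverse inclusion, I would take an arbitrary $w\in F_{ap}\setminus H_0$ and produce an element $x\in T_{ap}\cap\Spun{d}_+$ with $\eta_d(x)=\eta_d(w)$. By the definition of $F_{ap}$, write $w=A\gamma B$ with $A=\BI+\frac{1}{2}e_{d+1}e_{d+2}i(p)$, $B=\BI-\frac{1}{2}e_{d+1}e_{d+2}i(a)$, and $\gamma\in C\ell_d^0$; since $w\notin H_0$, the coefficient $r$ of $\BI$ in $\gamma$ is nonzero. Let $\lambda_{jk}$ denote the coefficient of $e_je_k$ in $\gamma$ for $1\le j<k\le d$. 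The technical heart of the argument is the claim that the 2-term coefficients of $A\gamma B$ are determined by $r$, the $\lambda_{jk}$'s, and $a,p$: expanding $A\gamma B=\gamma+\frac{1}{2}e_{d+1}e_{d+2}(i(p)\gamma-\gamma i(a))$, the coefficients of the pure 2-terms $e_je_k$ with $j,k\le d$ come directly from $\gamma$ and equal $\lambda_{jk}$, while the coefficients of the mixed 2-terms $e_\ell e_{d+1}e_{d+2}$ come from the 1-vector part of $i(p)\gamma-\gamma i(a)$; a 1-vector multiplied by an even-grade component of $\gamma$ of grade $\ge 4$ can only produce terms of grade $\ge 3$, so this 1-vector part depends only on the 0-term and 2-terms of $\gamma$.

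To construct $x$, I would apply Lemma \ref{le:SpinSurject} to obtain $g\in G_d$ whose coefficient of $\BI$ is $r$ and whose coefficient of $e_je_k$ is $\lambda_{jk}$ for $1\le j<k\le d$. Writing $g=s\gamma'$ with $s\in\RR\setminus\{0\}$ and $\gamma'\in\Spin{d}$, and replacing $(s,\gamma')$ by $(-s,-\gamma')$ if needed, I may assume $\gamma'\in\Spin{d}_+$. Set $x=A\gamma' B$. By Lemma \ref{le:TapBasicRd} we have $x\in T_{ap}$, and since the first coordinate of $x$ equals that of $\gamma'$ (which is positive), $x\in T_{ap}\cap\Spun{d}_+$. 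Now $sx=AgB$ has first coordinate $r$ and the same 2-term coefficients as $w=A\gamma B$, so the technical claim above gives $\eta_d(sx)=\eta_d(w)$; combined with the scale-invariance of the gnomonic projection, this yields $\eta_d(x)=\eta_d(sx)=\eta_d(w)$, as desired.

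The main obstacle is the Clifford-algebra bookkeeping needed to justify the technical claim about which components of $\gamma$ influence the 2-term coefficients of $A\gamma B$; once that is in hand, Lemmas \ref{le:SpinSurject} and \ref{le:TapBasicRd} slot in essentially for free.
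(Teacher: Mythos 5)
Your proposal is correct and follows essentially the same route as the paper. The forward inclusion is the same one-liner via Lemma~\ref{le:TapStructureRd}; the reverse inclusion uses the same ingredients (Lemma~\ref{le:SpinSurject} to produce a suitable $\gamma'\in\Spin{d}_+$, Lemma~\ref{le:TapBasicRd} to recognize $A\gamma'B$ as an element of $T_{ap}$, and the observation that the $0$-term and $2$-term coefficients of $A\gamma B$ are determined by those of $\gamma$); the only difference is that your ``technical heart'' is exactly the content of Lemma~\ref{le:TermsSize}, which the paper cites rather than rederiving inline.
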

\begin{proof}
By Lemma \ref{le:TapStructureRd} we have $T_{ap} \cap \Spun{d}_+ \subseteq F_{ap} \setminus H_0$, which implies that $\eta_{d}(T_{ap} \cap \Spun{d}_+) \subseteq  \eta_d\left(F_{ap} \setminus H_0 \right)$.
For the other direction, we consider $z \in F_{ap} \setminus H_0$.
To complete the proof, we will show that there exists $x \in T_{ap} \cap \Spun{d}_+$ such that $\eta_d(z) = \eta_d(x)$.

We recall that $\left(\BI-\frac{1}{2}e_{d+1}e_{d+2}i(p)\right)\left(\BI+\frac{1}{2}e_{d+1}e_{d+2}i(p)\right) = \BI$.
Since $z\in F_{ap}$, we have
\begin{align*}
\left(\BI-\frac{1}{2}e_{d+1}e_{d+2}i(p)\right) z \left(\BI+\frac{1}{2}e_{d+1}e_{d+2}i(a)\right) \in C\ell_d^0.
\end{align*}

Since $C\ell_d^0$ is contained in $Z^0_d$, we can also think of $G_d$ as contained in $Z^0_d$.
Then, by Lemma \ref{le:SpinSurject} there exists $\gamma \in \Spin{d}_+$ such that
\[ \eta_d(\gamma) = \eta_d\left(\left(\BI-\frac{1}{2}e_{d+1}e_{d+2}i(p)\right) z \left(\BI+\frac{1}{2}e_{d+1} e_{d+2}i(a)\right) \right). \]

Thus, there exists $\lambda \in \RR\setminus\{0\}$ such that $\left(\BI-\frac{1}{2}e_{d+1}e_{d+2}i(p)\right) z \left(\BI+\frac{1}{2} e_{d+1}e_{d+2}i(a)\right) - \lambda \gamma$ contains no 0-term and no 2-terms.
By Lemma \ref{le:TermsSize}, multiplying from the left by $\left(\BI+\frac{1}{2}e_{d+1}e_{d+2}i(p)\right)$ and from the right by $\left(\BI-\frac{1}{2}e_{d+1} e_{d+2}i(a)\right)$ cannot create any 0-terms and 2-terms.
That is, setting $y=\lambda\left(\BI+\frac{1}{2}e_{d+1}e_{d+2}i(p)\right) \gamma \left(\BI-\frac{1}{2}e_{d+1}e_{d+2}i(a)\right)$, the expression $z - y$ contains no 0-terms and 2-terms.
Equivalently, $z$ and $y$ have the same the same 0-terms and 2-terms.
Note that $y$ has a nonzero first coordinate, so $\eta_d \left(y\right) = \eta_d(z)$.

Set $x= y/\lambda$.
By Lemma \ref{le:TapBasicRd}, we have
\[ x = \left(\BI+\frac{1}{2}e_{d+1}e_{d+2}i(p)\right) \gamma \left(\BI-\frac{1}{2}e_{d+1}e_{d+2}i(a)\right) = \gamma + \frac{1}{2}e_{d+1}e_{d+2}\left(i(p)\gamma - \gamma i(a)\right) \in T_{ap}.\]
Since $\gamma \in \Spin{d}_+$, we get that $x \in \Spun{d}_+$.
Finally,  $\eta_d(x) = \eta_d \left(y\right) = \eta_d(z)$, as required.
\end{proof}

Note that the map $\eta_d(x)$ is well-defined for every point $x\in \RR^{2^{d}}\setminus H_0$.
Additionally, when we restrict the domain of $\eta_d$ to $H_1$ it is a linear map.
Let $\eta'_d: \RR^{2^d} \to \RR^{\binom{d+1}{2}}$ be the standard projection that keeps only the coordinates corresponding to basis elements of $Z_d^0$ that are 2-terms.
We can think of $\eta'_d$ as a linear extension of the restricted $\eta_d$ to $\RR^{2^d}$.

\begin{lemma} \label{le:flatDimension}
The projection $\eta_d(T_{ap} \cap \Spun{d}_+)$ is a $\binom{d}{2}$-flat.
\end{lemma}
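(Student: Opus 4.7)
The plan is to apply Lemma \ref{le:TapProjection} to reduce the problem to analyzing $\eta'_d(F_{ap}\cap H_1)$, and then to make the projection explicit by expanding elements of $F_{ap}$ in the basis of $Z_d^0$.

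By Lemma \ref{le:TapProjection}, $\eta_d(T_{ap}\cap \Spun{d}_+)=\eta_d(F_{ap}\setminus H_0)$. The set $F_{ap}$ is a linear subspace of $Z_d^0$ that is ruled by lines through the origin, and it meets $H_1$ (e.g.\ by taking $\gamma=\BI$ in \eqref{eq:FapDefD}). Consequently every point of $F_{ap}\setminus H_0$ is a positive scalar multiple of a unique point of $F_{ap}\cap H_1$, and $\eta_d$ agrees with $\eta'_d$ on $H_1$; thus $\eta_d(F_{ap}\setminus H_0)=\eta'_d(F_{ap}\cap H_1)$. So it suffices to show that $\eta'_d(F_{ap}\cap H_1)$ is a $\binom{d}{2}$-flat in $\RR^{\binom{d+1}{2}}$.

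By Lemma \ref{le:TapBasicRd}, every element of $F_{ap}$ has the form
\[
x_\gamma \;=\; \gamma \;+\; \tfrac{1}{2}\,e_{d+1}e_{d+2}\bigl(i(p)\gamma - \gamma i(a)\bigr),\qquad \gamma\in C\ell_d^0,
\]
and $x_\gamma\in H_1$ iff the coefficient of $\BI$ in $\gamma$ is $1$. Write $\gamma = \BI + \sum_{1\le j<k\le d} c_{jk}\,e_je_k + \gamma_{>2}$, where $\gamma_{>2}$ consists only of $4$-terms and higher. Among the $\binom{d+1}{2}$ coordinates that $\eta'_d$ keeps, the $\binom{d}{2}$ coefficients of $e_je_k$ in $x_\gamma$ are exactly the $c_{jk}$ (the $e_{d+1}e_{d+2}$-part contributes nothing here), while the $d$ coefficients of $e_je_{d+1}e_{d+2}$ in $x_\gamma$ are $\tfrac{1}{2}$ times the coefficient of $e_j$ in $i(p)\gamma-\gamma i(a)$.

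The crucial observation is that this $1$-term part of $i(p)\gamma-\gamma i(a)$ depends only on the $0$- and $2$-terms of $\gamma$: multiplying $i(p)$ or $i(a)$ (each a $1$-term) by a $2m$-term with $m\ge 2$ produces only terms of degree $2m\pm 1\ge 3$. A direct expansion (using $e_je_je_k=-e_k$ and $e_ke_je_k=e_j$) then shows that each such $1$-term coefficient is an affine function of the $c_{jk}$ alone (with constant contribution coming from $i(p-a)$). Hence the assignment $\psi\colon (c_{jk})_{1\le j<k\le d}\mapsto \eta'_d(x_\gamma)$ defines an affine map $\RR^{\binom{d}{2}}\to\RR^{\binom{d+1}{2}}$ whose first $\binom{d}{2}$ coordinates are the identity on $(c_{jk})$; in particular $\psi$ is injective. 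Since $\eta'_d(x_\gamma)$ does not depend on $\gamma_{>2}$, we have $\eta'_d(F_{ap}\cap H_1)=\psi(\RR^{\binom{d}{2}})$, which is an affine flat of dimension exactly $\binom{d}{2}$. The only place that requires care is the degree-counting step ruling out any contribution of $\gamma_{>2}$ to the $1$-term part of $i(p)\gamma-\gamma i(a)$; everything else is elementary linear algebra, and there is no substantial obstacle.
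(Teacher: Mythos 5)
Your proof is correct, and it takes a slightly different (and more hands-on) route than the paper's. The paper computes $\dim(F_{ap}\cap H_0) - \dim\bigl(\ker \eta'_d \cap F_{ap}\cap H_0\bigr)$: it identifies the kernel piece as the span of $\tau_{ap}(f)$ over standard basis $m$-terms $f$ of $C\ell_d^0$ with $m\geq 4$, invokes Lemma \ref{le:TermsSize} to see these lie in $\ker\eta'_d$, and uses the bijectivity of $\tau_{ap}$ to count dimensions. You instead parametrize $F_{ap}\cap H_1$ directly by the $0$- and $2$-parts of $\gamma\in C\ell_d^0$ and construct the affine map $\psi$, observing that its first $\binom{d}{2}$ output coordinates are the identity on $(c_{jk})$, so it is injective; the degree-counting observation that $\gamma_{>2}$ contributes nothing to the $1$-term part of $i(p)\gamma - \gamma i(a)$ plays the role of Lemma \ref{le:TermsSize}. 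Your version has the mild advantage of making the flat's parametrization explicit (in effect re-deriving the equations in Theorem \ref{th:FlatStructureRd}), whereas the paper's version recycles machinery (the $\tau_{ap}$ bijection and Lemma \ref{le:TermsSize}) that is reused in later lemmas. One small citation slip: the parametrization $x_\gamma = \gamma + \tfrac12 e_{d+1}e_{d+2}(i(p)\gamma - \gamma i(a))$ with $\gamma$ ranging over all of $C\ell_d^0$ describes $F_{ap}$, and should be attributed to the definition in \eqref{eq:FapDefD} (cf.\ the expansion \eqref{eq:XinM}), not to Lemma \ref{le:TapBasicRd}, which is the restriction to $\gamma\in\Spin{d}$ and describes $T_{ap}$.
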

\begin{proof}
Since $F_{ap}$ is ruled by lines that are incident to the origin, we get that $\eta_d(F_{ap} \setminus H_0) = \eta_d(F_{ap}\cap H_1)$.
Since $F_{ap}\cap H_1$ is a flat and the restriction of $\eta_d$ to $H_1$ is a linear map, Lemma \ref{le:TapProjection} implies that $\eta_{d}(T_{ap} \cap \Spun{d}_+)$ is a flat in $\RR^{\binom{d+1}{2}}$.
It remains to establish the dimension of this flat.

From the definition of $F_{ap}$ in \eqref{eq:FapDefD} we notice that $F_{ap}\cap H_1 \neq \emptyset$ (for example, by taking the element $\BI$ from $C\ell_d^0$ in this definition).
We also note that every $v\in F_{ap}\cap H_1$ satisfies $F_{ap}\cap H_1 = v+ (F_{ap}\cap H_0)$.
For such a $v$ we have
\[ \eta_d(F_{ap}\setminus H_0) = \eta_d(F_{ap}\cap H_1) =\eta'_d(v+F_{ap}\cap H_0) = \eta_d(v) + \eta'_d(F_{ap}\cap H_0).\]

Combining the above with Lemma \ref{le:TapProjection} implies that
\[ \dim (\eta_d(T_{ap} \cap \Spun{d}_+)) = \dim (\eta'_d(F_{ap}\cap H_0)) = \dim(F_{ap}\cap H_0) - \dim \left(\ker \eta'_d \cap F_{ap}  \cap H_0\right). \]

Since $\dim F_{ap} = \dim (C\ell_d^0) = 2^{d-1}$ and $F_{ap}$ properly intersects the hyperplane $H_0$, we get that $\dim(F_{ap}\cap H_0)=2^{d-1}-1$.
Note that the elements of $\ker \eta'_d \cap F_{ap}  \cap H_0$ do not have 2-terms and 0-terms.
Let $\tau_{ap}:Z_d^0 \to Z_d^0$ be the map defined by $\tau_{ap}(x) = (\BI+\frac{1}{2}e_{d+1}e_{d+2}i(p)) x (\BI-\frac{1}{2}e_{d+1}e_{d+2}i(a))$.
By Lemma \ref{le:TermsSize}, we have that $\ker \eta'_d \cap F_{ap}  \cap H_0$ is the subspace generated by
\[ \left\{\tau_{ap}(x) :\ x \text{ is an element of the standard basis of } C\ell_d^0 \text{ that is an } m\text{-term for some } m\geq 4\right\}. \]

Since $\tau_{ap}^{-1}(x)=\left(\BI-\frac{1}{2}e_{d+1}e_{d+2}i(p)\right) x \left(\BI+\frac{1}{2}e_{d+1}e_{d+2}i(a)\right)$, we note that $\tau_{ap}$ is a linear bijection.
This implies that the above generating set is linearly independent, so $\dim \left(\ker \eta'_d \cap F_{ap}  \cap H_0\right) = 2^{d-1}-\binom{d}{2}-1$.
We conclude that
\[\dim (\eta_d(T_{ap} \cap \Spun{d}_+)) = 2^{d-1}-1 - \left(2^{d-1}-\binom{d}{2}-1\right) = \binom{d}{2}. \]
\end{proof}

\parag{Studying the flats in $\RR^{\binom{d+1}{2}}$.}
Let $L_{ap} = \eta_d(T_{ap}\setminus H_0)$ be the $\binom{d}{2}$-flat in $\RR^{\binom{d+1}{2}}$ that corresponds to $T_{ap}$.
Given points $a,p,b,q\in \RR^d$, we now study what happens to $L_{ap}$ and $L_{bq}$ when $T_{ap}\cap T_{bq}\neq \emptyset$.
This part is mostly identical to the case of $\RR^3$ that was presented in Section \ref{sec:DDR3}.
In particular, the proofs of Lemma \ref{le:EmptyIntTCond}, Lemma \ref{le:FapIntersectionChar}, Lemma \ref{le:ContainingFlat}, and Corollary \ref{co:EmptyTIntersection} easily extend to $\RR^d$ (by changing $e_{4}e_{5}$ to $e_{d+1}e_{d+2}$ and other such straightforward revisions).

The proof of Lemma \ref{le:LapbqDim} does not immediately extend to $\RR^d$.
Instead of that lemma, we rely on the three following ones.
Let $T_{ap+}$ be the set of points of $T_{ap}$ that have a positive first coordinate.

\begin{lemma} \label{le:TauIntProject}
If $T_{ap}\cap T_{bq} \not \subseteq H_0$ then $L_{ap}\cap L_{bq} = \eta_d\left(F_{ap} \cap F_{bq}\cap H_1\right)$.
\end{lemma}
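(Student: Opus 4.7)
The plan is to prove the two inclusions separately. For the easy direction $\eta_d(F_{ap}\cap F_{bq}\cap H_1)\subseteq L_{ap}\cap L_{bq}$, first observe that because $F_{ap}$ is a linear subspace containing $\BI$, every element of $F_{ap}\setminus H_0$ is a nonzero scalar multiple of an element of $F_{ap}\cap H_1$, and $\eta_d$ is invariant under nonzero rescaling. Combined with Lemma \ref{le:TapProjection} and the fact that $T_{ap}\setminus H_0$ and $T_{ap}\cap\Spun{d}_+$ have the same $\eta_d$-image (since $-\BI\in\ker\rho$ means $T_{ap}$ is symmetric under negation), this gives $L_{ap}=\eta_d(T_{ap}\cap\Spun{d}_+)=\eta_d(F_{ap}\setminus H_0)=\eta_d(F_{ap}\cap H_1)$, and similarly for $L_{bq}$. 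Any element of $F_{ap}\cap F_{bq}\cap H_1$ then projects into both $L_{ap}$ and $L_{bq}$.

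For the reverse inclusion, I would take any $y\in L_{ap}\cap L_{bq}$ and use Lemma \ref{le:TapProjection} to obtain representatives $z_1\in T_{ap}\cap\Spun{d}_+$ and $z_2\in T_{bq}\cap\Spun{d}_+$ with $\eta_d(z_1)=\eta_d(z_2)=y$. Because Corollary \ref{co:EtaDInjective} (and Theorem \ref{th:etaBijectRd}) guarantee that $\eta_d$ is injective on $\Spun{d}_+$, this forces $z_1=z_2$, and in particular $z_1\in T_{ap}\cap T_{bq}\subseteq F_{ap}\cap F_{bq}$. The first coordinate of $z_1$ is positive, so setting $w=z_1/(z_1)_1$ produces an element of $H_1$, and since $F_{ap}$ and $F_{bq}$ are linear subspaces, $w$ lies in $F_{ap}\cap F_{bq}\cap H_1$. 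Invariance of $\eta_d$ under positive scaling then gives $\eta_d(w)=\eta_d(z_1)=y$, placing $y$ in $\eta_d(F_{ap}\cap F_{bq}\cap H_1)$.

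The hypothesis $T_{ap}\cap T_{bq}\not\subseteq H_0$ plays a clarifying rather than essential role: the reverse-inclusion argument already shows that $L_{ap}\cap L_{bq}\ne\emptyset$ implies $T_{ap}\cap T_{bq}\cap\Spun{d}_+\ne\emptyset$, so if the hypothesis failed, both sides of the claimed equality would be forced to be empty and the statement would be trivial. The main conceptual step is the lifting through $T_{ap}\cap\Spun{d}_+$ supplied by Lemma \ref{le:TapProjection}: without it, one could produce an element of $F_{ap}\cap H_1$ and an element of $F_{bq}\cap H_1$ with the same $\eta_d$-image and no obvious reason to coincide, and the identification $L_{ap}\cap L_{bq}=\eta_d(F_{ap}\cap F_{bq}\cap H_1)$ would be unavailable. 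The injectivity of $\eta_d$ on the $\Spun{d}_+$ layer is precisely what allows us to conclude that the two a priori different lifts are in fact the same element, producing the desired common representative in $F_{ap}\cap F_{bq}\cap H_1$.
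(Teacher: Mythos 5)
Your proof is correct and follows essentially the same route as the paper: both directions rest on Lemma \ref{le:TapProjection} together with the injectivity of $\eta_d$ on $\Spun{d}_+$ from Theorem \ref{th:etaBijectRd}, which the paper packages as the set identity $\eta_d(T_{ap+}\cap T_{bq+}) = \eta_d(T_{ap+})\cap\eta_d(T_{bq+})$ while you unwind it by exhibiting the common lift $z_1=z_2$. Your side remark that the hypothesis $T_{ap}\cap T_{bq}\not\subseteq H_0$ is not strictly needed (both sides become empty when it fails, with the forward inclusion forcing $F_{ap}\cap F_{bq}\cap H_1=\emptyset$) is accurate, though the paper only ever applies the lemma in the nondegenerate case anyway.
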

\begin{proof}
By Lemma \ref{le:TapStructureRd} we have $T_{ap+}\cap T_{bq+} \subseteq \left(F_{ap}\cap F_{bq}\right)\setminus H_0$.
This implies that
\[ \eta_d(T_{ap+}\cap T_{bq+}) \subseteq \eta_d\left(\left(F_{ap}\cap F_{bq}\right)\setminus H_0\right) = \eta_d\left(F_{ap}\cap F_{bq}\cap H_1\right). \]

By Lemma \ref{le:TapProjection} we have that $\eta_d(T_{ap+}) = \eta_d(F_{ap}\setminus H_0) = \eta_d(F_{ap}\cap H_1)$,
and symmetrically $\eta_d(T_{bq+}) = \eta_d(F_{bq}\cap H_1)$.
Combining this with Theorem \ref{th:etaBijectRd} implies that
\begin{align*}
\eta_d(T_{ap+}\cap T_{bq+}) = \eta_d(T_{ap+})\cap \eta_d(T_{bq+}) = \eta_d(F_{ap}\cap H_1)\cap \eta_d(F_{bq}\cap H_1) \supseteq \eta_d\left(F_{ap}\cap F_{bq}\cap H_1\right).
\end{align*}

Combining the above, we conclude that
\[ \eta_d\left(F_{ap}\cap F_{bq}\cap H_1\right) = \eta_d(T_{ap+}\cap T_{bq+}) = L_{ap} \cap L_{bq}, \]
as asserted.
\end{proof}

In Lemma \ref{le:LapbqDimRd} below, we will study $L_{ap}\cap L_{bq}$ when $T_{ap}\cap T_{bq} \not\subseteq H_0$.
Handling the case where $T_{ap}\cap T_{bq}\neq \emptyset$ and $T_{ap}\cap T_{bq} \subseteq H_0$ is more difficult.
The following lemma shows that this problematic case cannot happen too often.

\begin{lemma} \label{le:FewBadPairs}
Assume that $T_{ap}\cap T_{bq}\neq \emptyset$.
Then $T_{ap}\cap T_{bq} \subseteq H_0$ if and only if $a-b = q-p$.
\end{lemma}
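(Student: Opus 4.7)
The plan is to translate both sides of the equivalence into a condition on $\Spin{d}$ via Lemma \ref{le:TapBasicRd}, and then resolve each direction with a short Clifford-algebra calculation. Set $w = a-b$ and $w' = p-q$; the hypothesis $T_{ap} \cap T_{bq} \neq \emptyset$ guarantees $\|w\| = \|w'\|$.

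First I would set up the parameterization. By Lemma \ref{le:TapBasicRd}, each $x \in T_{ap} \cap T_{bq}$ determines a unique $\gamma \in \Spin{d}$ (namely the component of $x$ not involving $e_{d+1}e_{d+2}$) for which both $x = \gamma + \tfrac{1}{2}e_{d+1}e_{d+2}(i(p)\gamma - \gamma i(a))$ and $x = \gamma + \tfrac{1}{2}e_{d+1}e_{d+2}(i(q)\gamma - \gamma i(b))$ hold. Equating the $e_{d+1}e_{d+2}$-parts gives $i(p-q)\gamma = \gamma i(a-b)$, equivalently $\gamma i(w)\gamma^{-1} = i(w')$, i.e.\ $\rho(\gamma)(w) = w'$. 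Since the $\BI$-coefficient of $x$ equals that of $\gamma$, the condition $T_{ap} \cap T_{bq} \subseteq H_0$ is equivalent to: every $\gamma \in \Spin{d}$ with $\rho(\gamma)(w) = w'$ has zero $\BI$-coefficient.

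For the direction $a-b = q-p \Longrightarrow T_{ap}\cap T_{bq} \subseteq H_0$ (with $w \neq 0$; the trivial case $w = 0$ only arises when $a=b$ and $p=q$, which the context implicitly excludes), the equation $w' = -w$ forces every admissible $\gamma$ to anticommute with $i(w)$. Choose an orthonormal basis of $\RR^d$ starting with $w/\|w\|$ so that $i(w) = \|w\|e_1$, and expand $\gamma \in C\ell_d^0$ in the standard basis $\{e_\sigma : |\sigma|\text{ even}\}$. Using $e_je_k = -e_ke_j$ for $j\neq k$ and $e_j^2 = -\BI$, one computes that $e_\sigma$ commutes with $e_1$ when $1\notin \sigma$ (the even parity of $|\sigma|$ makes $(-1)^{|\sigma|}=1$) and anticommutes with $e_1$ when $1 \in \sigma$. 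Anticommutation of $\gamma$ with $e_1$ therefore forces every coefficient of a basis element with $1 \notin \sigma$ to vanish, and in particular the coefficient of $\BI = e_\emptyset$ is zero.

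For the converse I prove the contrapositive: assuming $w' \neq -w$ with $\|w\| = \|w'\|$, I exhibit a $\gamma \in \Spin{d}$ with $\rho(\gamma)(w) = w'$ and nonzero $\BI$-coefficient, which produces an $x \in T_{ap}\cap T_{bq}$ outside $H_0$. Choose orthonormal vectors $e_1,e_2 \in \RR^d$ with $e_1 = w/\|w\|$ and $w' = \|w\|(\cos\theta\, e_1 + \sin\theta\, e_2)$ for some $\theta \in [0,\pi)$; the exclusion $\theta = \pi$ is exactly the assumption $w' \neq -w$. Taking $\gamma = \cos(\theta/2)\BI + \sin(\theta/2)e_1e_2 \in \Spin{2} \subseteq \Spin{d}$, the standard $\Spin{2}$ computation gives $\rho(\gamma)(w) = w'$, while the $\BI$-coefficient $\cos(\theta/2)$ is strictly positive. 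The main technical hurdle is the parity-based sign analysis in the ``$\Leftarrow$'' direction, but it reduces directly to the Clifford anticommutation relations.
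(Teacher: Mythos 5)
Your proof is correct and takes a genuinely different route from the paper's. The paper works through the sets $F_{ap}$: it invokes the extension of Lemma \ref{le:FapIntersectionChar} to write $F_{ap}\cap F_{bq}$ as $\left(\BI+\frac{1}{2}e_{d+1}e_{d+2}i(p)\right)\beta\, C\ell_{d-1}^0\,\alpha\left(\BI-\frac{1}{2}e_{d+1}e_{d+2}i(a)\right)$, uses Lemma \ref{le:TermsSize} to reduce the containment in $H_0$ to whether $\beta\, C\ell_{d-1}^0\,\alpha\subseteq H_0$, then Lemma \ref{le:mTermsConj} to rewrite this as $C\ell_{d-1}^0\,\alpha\beta\subseteq H_0$, and finally settles this by an explicit choice of $\alpha,\beta$ in each case. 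You bypass all of this by staying inside $\Spin{d}$: via Lemma \ref{le:TapBasicRd} the $\BI$-coefficient of $x\in T_{ap}\cap T_{bq}$ equals that of its $\Spin{d}$-component $\gamma$, which must satisfy $\gamma\, i(a-b)\,\gamma^{-1}=i(p-q)$, so the lemma becomes a clean statement about which such $\gamma$ have nonzero $\BI$-coefficient. Your forward direction reduces $a-b=q-p$ to anticommutation of $\gamma$ with $i(a-b)$ and kills the $\BI$-coefficient by the parity of the basis monomials; your converse exhibits an explicit $\Spin{2}$ rotation with $\BI$-coefficient $\cos(\theta/2)>0$. The argument is shorter and avoids Lemmas \ref{le:FapIntersectionChar}, \ref{le:mTermsConj}, \ref{le:TermsedAed}, \ref{le:TermsSize} entirely, and it makes the geometry transparent: $a-b=q-p$ forces a rotation by $\pi$ about an axis orthogonal to $a-b$, whose $\Spin{d}$-lift has no $\BI$-term. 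One small point deserves a sentence: the step ``choose an orthonormal basis so that $i(w)=\|w\|e_1$'' is implemented by replacing $\gamma$ with $\delta\gamma\delta^{-1}$ for some $\delta\in\Spin{d}$ carrying $w/\|w\|$ to $e_1$; this is legitimate because conjugation by an element of $\Spin{d}$ is an orthogonal map of $C\ell_d^0$ fixing $\BI$ (as shown inside the proof of Lemma \ref{le:mTermsConj}) and hence preserves the $\BI$-coefficient, but it is a nontrivial reduction that should be stated.
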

\begin{proof}
By the (straightforward) extension of Lemma \ref{le:FapIntersectionChar}  to $\Spun{d}$, we have
\begin{equation} \label{FIntersectionNonZero}
F_{ap} \cap F_{bq} = \left(\BI + \frac{1}{2}e_{d+1}e_{d+2}i(p)\right)\beta C\ell_{d-1}^0\alpha \left(\BI - \frac{1}{2}e_{d+1}e_{d+2}i(a)\right),
\end{equation}
for any $\alpha,\beta \in \Spin{d}$ that satisfy $\alpha \frac{i(b-a)}{\|b-a\|}\alpha^{-1} = e_{d}$ and $\beta e_{d} \beta^{-1} = \frac{i(q-p)}{\|q-p\|}$.
Combining this with Lemma \ref{le:TermsSize} implies that
\begin{equation} \label{eq:NotInH0Cond}
\left(\BI +\frac{1}{2}e_{d+1}e_{d+2}i(p)\right) \beta C\ell_{d-1}^0 \alpha \left(\BI-\frac{1}{2}e_{d+1}e_{d+2}i(a)\right)\subseteq H_0
\end{equation}
if and only if $\beta C\ell_{d-1}^0 \alpha \subseteq H_0$. By lemma 5.1 we have that $x\in H_0$ if and only if $\beta^{-1} x \beta\in H_0$, so $\beta C\ell_{d-1}^0 \alpha \subseteq H_0$ if and only if $C\ell_{d-1}^0 \alpha\beta \subseteq H_0$.

Assume that $a-b = q-p$.
For an arbitrary $\beta \in \Spin{d}$ such that $\beta e_{d} \beta^{-1} = \frac{i(q-p)}{\|q-p\|}$,
set $\gamma = e_{d-1}e_d$ and $\alpha = \gamma \beta^{-1}$.
Since $\gamma$ is the product of two elements from $i(\SSS^{d-1})$, we have that $\gamma\in \Spin{d}$, which in turn implies that $\alpha\in \Spin{d}$.
We get that
\begin{align} \label{eq:longAlphaConj}
\alpha \left(\frac{i(b-a)}{\|b-a\|}\right)\alpha^{-1} = \gamma \beta^{-1}\left(\frac{i(b-a)}{\|b-a\|}\right)\beta \gamma^{-1} = -\gamma \beta^{-1}\left(\frac{i(q-p)}{\|q-p\|}\right)\beta \gamma^{-1} = -\gamma e_d \gamma^{-1} = e_d.
\end{align}
We can thus use these $\alpha$ and $\beta$ in \eqref{eq:NotInH0Cond}.
This implies that $C\ell_{d-1}^0 \alpha\beta = C\ell_{d-1}^0 e_{d-1}e_d \subseteq H_0$, which in turn implies that \eqref{eq:NotInH0Cond} is false.
Combining this with \eqref{FIntersectionNonZero} and with Lemma \ref{le:TapStructureRd} implies $T_{ap}\cap T_{bq} \subseteq H_0$.

Next, assume that $a-b\neq q-p$. For an arbitrary $\beta \in \Spin{d}$ such that $\beta e_{d} \beta^{-1} = \frac{i(q-p)}{\|q-p\|}$,
set $B = \beta^{-1}\frac{i(b-a)}{\|b-a\|}\beta$.
We have that $-e_d\neq B$, so we may set $\gamma = \frac{1}{\left\|e_d+B\right\|}e_d\left(e_d+B\right)$ and $\alpha = \gamma \beta^{-1}$.
Since $\gamma$ is the product of two elements from $i(\SSS^{d-1})$, we have that $\gamma\in \Spin{d}$, which in turn implies that $\alpha\in \Spin{d}$.
Performing a calculation similar to the one in the proof of lemma 5.4, we have that $\alpha \left(\frac{i(b-a)}{\|b-a\|}\right)\alpha^{-1} = e_d$.
We can thus use these $\alpha$ and $\beta$ in \eqref{FIntersectionNonZero}.

Let
\begin{align} \label{eq:xDef1Tap}
x = \left(\BI+\frac{1}{2}e_{d+1}e_{d+2}i(p)\right)\beta \BI \gamma \beta^{-1} \left(\BI-\frac{1}{2}e_{d+1}e_{d+2}i(a)\right).
\end{align}
Note that $x\in F_{ap}\cap F_{bq}$.
By Lemmas \ref{le:mTermsConj}, \ref{le:TermsedAed}, and \ref{le:TermsSize} we have that $x \notin H_0$.
Since $x$ is a product of elements of $\Spun{d}$, we have that $x\in \Spun{d}$.
Lemma \ref{le:TapStructureRd} implies $T_{ap}\cap T_{bq} = F_{ap}\cap F_{bq}\cap \Spun{d}$, so $x \in T_{ap}\cap T_{bq}$.
We conclude that $T_{ap}\cap T_{bq}\not\subseteq H_0$, which completes the proof.
\end{proof}

\begin{lemma} \label{le:LapbqDimRd}
If $T_{ap} \neq T_{bq}$ and $T_{ap}\cap T_{bq} \not \subseteq H_0$, then $\dim \left(L_{ap}\cap L_{bq}\right) = \binom{d-1}{2}$.
\end{lemma}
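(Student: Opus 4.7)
Since $T_{ap}\cap T_{bq}\not\subseteq H_0$, Lemma \ref{le:TauIntProject} gives $L_{ap}\cap L_{bq}=\eta_d(F_{ap}\cap F_{bq}\cap H_1)$. Because $F_{ap}\cap F_{bq}$ is a linear subspace of $Z_d^0$ not contained in $H_0$ (by Lemma \ref{le:TapStructureRd}) and $\eta_d$ restricted to $H_1$ is the linear projection $\eta_d'$ onto the $2$-term coordinates, $L_{ap}\cap L_{bq}$ is an affine flat. Fixing any $v_0\in F_{ap}\cap F_{bq}\cap H_1$, the standard affine dimension formula gives
\[
\dim(L_{ap}\cap L_{bq}) = \dim(F_{ap}\cap F_{bq}\cap H_0) - \dim\bigl(F_{ap}\cap F_{bq}\cap H_0 \cap \ker\eta_d'\bigr).
\]
The straightforward extension of Lemma \ref{le:FapIntersectionChar} to $\RR^d$ (already invoked in Lemmas \ref{le:FewBadPairs} and \ref{le:TauIntProject}) describes $F_{ap}\cap F_{bq}$ as the image of $C\ell_{d-1}^0$ under the linear map $z\mapsto (\BI+\tfrac{1}{2}e_{d+1}e_{d+2}i(p))\beta z\alpha(\BI-\tfrac{1}{2}e_{d+1}e_{d+2}i(a))$, so $\dim(F_{ap}\cap F_{bq})=2^{d-2}$ and hence $\dim(F_{ap}\cap F_{bq}\cap H_0)=2^{d-2}-1$.

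For the kernel term, Lemma \ref{le:TermsSize} (used as in Lemma \ref{le:flatDimension}) shows that such an image $x=(\BI+\tfrac{1}{2}e_{d+1}e_{d+2}i(p))\beta z\alpha(\BI-\tfrac{1}{2}e_{d+1}e_{d+2}i(a))$ lies in $H_0\cap\ker\eta_d'$ exactly when $y:=\beta z\alpha\in\beta C\ell_{d-1}^0\alpha$ has no $\BI$-term and no $2$-terms (viewed in $C\ell_d^0$). Indeed, the $\BI$-term of $x$ equals that of $y$, the $2$-terms of $x$ of the form $e_ie_j$ are exactly the $2$-terms of $y$, and the remaining $2$-terms of $x$ (those of the form $e_je_{d+1}e_{d+2}$) come from the $1$-terms of $i(p)y-yi(a)$, which vanish automatically when $y_0=y_2=0$. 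Writing $\pi_{0,2}\colon C\ell_d^0\to C\ell_d^0$ for the projection onto the span of $\BI$ and the $2$-terms, this identifies the kernel dimension with $2^{d-2}-\dim\pi_{0,2}(\beta C\ell_{d-1}^0\alpha)$, so the calculation reduces to the key identity
\[
\dim\pi_{0,2}(\beta C\ell_{d-1}^0\alpha) = 1 + \binom{d-1}{2},
\]
which is the same count as for the untwisted $\pi_{0,2}(C\ell_{d-1}^0)$; the content of the identity is that the twist by $\beta$ on the left and $\alpha$ on the right does not collapse this projection.

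To establish this identity I would use the group-theoretic structure of $T_{ap}\cap T_{bq}$. The set $T_{pp}\cap T_{qq}$ is a subgroup of $\Spun{d}$, namely the stabilizer of the pair $(p,q)$, and $T_{ap}\cap T_{bq}$ is one of its right cosets: for any $x_0\in T_{ap}\cap T_{bq}$ (which exists by hypothesis) and any $x\in T_{ap}\cap T_{bq}$, one has $xx_0^{-1}\in T_{pp}\cap T_{qq}$. Under the double cover $\rho$ of Theorem \ref{th:SpunToSE}, this subgroup maps onto the group of proper rigid motions of $\RR^d$ fixing both $p$ and $q$, i.e., rotations around the line through $p$ and $q$, which is $SO(d-1)$ of dimension $\binom{d-1}{2}$; since $\ker\rho=\{\pm\BI\}$ is discrete, $\dim(T_{pp}\cap T_{qq})=\binom{d-1}{2}$, and the same holds for the open subset $T_{ap+}\cap T_{bq+}\subseteq T_{ap}\cap T_{bq}$. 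By Theorem \ref{th:etaBijectRd} together with the explicit rational inversion implicit in Lemma \ref{le:JdSurjective}, $\eta_d|_{\Spun{d}_+}$ is a biregular bijection onto $\RR^{\binom{d+1}{2}}$; combined with $L_{ap}=\eta_d(T_{ap+})$ and $L_{bq}=\eta_d(T_{bq+})$, injectivity yields $L_{ap}\cap L_{bq}=\eta_d(T_{ap+}\cap T_{bq+})$, and hence $\dim(L_{ap}\cap L_{bq})=\binom{d-1}{2}$, which unwinds to the key identity above. The main obstacle is making this final dimension-transfer step fully rigorous; the purely algebraic alternative would be to prove the key identity directly, perhaps by using the freedom in choosing $\alpha,\beta$ in Lemma \ref{le:FapIntersectionChar} to reduce to a canonical form where $\pi_{0,2}(\beta C\ell_{d-1}^0\alpha)$ can be computed coordinate by coordinate.
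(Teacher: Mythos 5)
Your setup matches the paper: you correctly use Lemma~\ref{le:TauIntProject} to identify $L_{ap}\cap L_{bq}$ with $\eta_d(F_{ap}\cap F_{bq}\cap H_1)$, apply the affine dimension formula, and obtain $\dim(F_{ap}\cap F_{bq}\cap H_0)=2^{d-2}-1$ from the extension of Lemma~\ref{le:FapIntersectionChar}. Your reformulation of the remaining step as the identity $\dim\pi_{0,2}(\beta C\ell_{d-1}^0\alpha)=1+\binom{d-1}{2}$ is a correct and rather clean restatement of what must be proved. But that identity is exactly where the substance of the lemma lives, and you do not prove it; as you note yourself, the proposal stops at the difficult step.

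The paper closes this gap algebraically, and it is worth seeing how. Instead of working with a generic $\alpha$, the paper takes $\alpha=\gamma\beta^{-1}$ with $\gamma=\frac{1}{\|e_d+B\|}e_d(e_d+B)$, so that $\beta z\alpha=\beta(z\gamma)\beta^{-1}$ becomes a conjugation by $\beta\in\Spin{d}$ of the product $z\gamma$. For a standard-basis $m$-term $f\in C\ell_{d-1}^0$ with $m\ge 4$, Lemma~\ref{le:TermsedAed} shows $f\gamma$ consists only of $m$- and $(m+2)$-terms, Lemma~\ref{le:mTermsConj} shows that conjugating by $\beta$ preserves term size, and Lemma~\ref{le:TermsSize} then shows $\tau_{ap}(f)$ has no $0$-term and no $2$-terms; the same lemmas run in reverse show that any $f$ with a nonzero $0$-term or $2$-term produces a nonzero $0$- or $2$-term after $\tau_{ap}$. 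This identifies $F_{ap}\cap F_{bq}\cap H_0\cap\ker\eta'_d$ with $\tau_{ap}$ of the span of basis $m$-terms, $m\ge 4$, which is linearly independent since $\tau_{ap}$ is a bijection, giving $\dim=2^{d-2}-\binom{d-1}{2}-1$. This is precisely the ``purely algebraic alternative'' you gesture at in your last sentence; the crucial extra idea you are missing is the specific choice $\alpha=\gamma\beta^{-1}$, which converts the two-sided multiplication into a conjugation so that Lemma~\ref{le:mTermsConj} applies.

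Regarding your group-theoretic route: it is an attractive alternative, and the Lie-theoretic inputs you invoke (that $T_{pp}\cap T_{qq}$ double-covers $SO(d-1)$ and so has dimension $\binom{d-1}{2}$, that $T_{ap}\cap T_{bq}$ is a right coset) are correct. The genuine gap is that Theorem~\ref{th:etaBijectRd} establishes only that $\eta_d|_{\Spun{d}_+}$ is a set-theoretic bijection. To transfer dimension you would need it to be a homeomorphism (via invariance of domain, since $\Spun{d}_+$ is a $\binom{d+1}{2}$-manifold and the target is $\RR^{\binom{d+1}{2}}$, and $\eta_d$ is continuous and injective), and you would separately need to know that $T_{pp}\cap T_{qq}$ is an embedded submanifold (Cartan's closed subgroup theorem) so that its open coset $T_{ap+}\cap T_{bq+}$ has well-defined manifold dimension, and finally that topological dimension of the affine flat $L_{ap}\cap L_{bq}$ agrees with its linear-algebraic dimension. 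None of these steps is false, but none is established in the paper either, and they bring in machinery disproportionate to the elementary linear-algebra target. You would also need to verify, before invoking any of this, that $a\neq b$ and $p\neq q$ so that the fixing subgroup really is $SO(d-1)$; this follows from the hypotheses but deserves a sentence. In short: your setup and the reduction are correct, but the key computation is not carried out, and your proposed replacement would need the extra topological scaffolding to be rigorous.
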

\begin{proof}
By the assumption $L_{ap} \cap L_{bq} \neq \emptyset$.
Let $v\in L_{ap} \cap L_{bq}$, and note that it suffices to prove that $\dim \left((L_{ap}-v)\cap(L_{bq}-v)\right) = \binom{d-1}{2}$.
By Lemma \ref{le:TauIntProject},
\begin{align*}
(L_{ap}-v)\cap(L_{bq}-v) + v &= L_{ap}\cap L_{bq} = \eta_d\left(F_{ap} \cap F_{bq}\cap H_1\right) = \eta'_d\left(F_{ap} \cap F_{bq}\cap H_0\right) + v.
\end{align*}

By the extension of Lemma \ref{le:FapIntersectionChar} to $\RR^d$, we have that $\dim\left(F_{ap} \cap F_{bq}\right) = \dim(C\ell_{d-1}) =  2^{d-2}$.
The assumption $T_{ap}\cap T_{bq} \not \subseteq H_0$ implies that $F_{ap} \cap F_{bq}$ properly intersects $H_0$.
This in turn implies $\dim\left(F_{ap} \cap F_{bq}\cap H_0\right) = 2^{d-2}-1$.
It remains to show that $\dim\left(F_{ap} \cap F_{bq}\cap H_0\cap \ker\left(\eta'_d\right)\right) = 2^{d-2}-\binom{d-1}{2}-1$.

For an arbitrary $\beta\in \Spin{d}$ that satisfies $\beta e_{d} \beta^{-1} = \frac{i(q-p)}{\|q-p\|}$, set $B= \beta^{-1}\frac{i(b-a)}{\|b-a\|}\beta$.
By Lemma \ref{le:FewBadPairs}, the assumption $T_{ap}\cap T_{bq}\not\subseteq H_0$ implies $a-b \neq q-p$,
which in turn implies that $B\neq -e_d$.
Let $\gamma = \frac{1}{\left\|e_n+B\right\|}e_n\left(e_n+B\right)$ and let $\alpha = \gamma \beta^{-1}$.
Since $\gamma$ is the product of two elements from $i(\SSS^{d-1})$, we have that $\gamma\in \Spin{d}$, which in turn implies that $\alpha\in \Spin{d}$.
By repeating the argument in \eqref{eq:longAlphaConj}, we get  that  $\alpha\frac{b-a}{||b-a||}\alpha^{-1} =  e_{d}$.
By the extension of Lemma \ref{le:FapIntersectionChar} to $\RR^d$, we have
\begin{equation} \label{eq:FIntForKernelBasis}
F_{ap}\cap F_{bq} = \left(\BI+\frac{1}{2}e_{d+1}e_{d+2}i(p)\right) \beta C\ell_{d-1}^0 \gamma \beta^{-1} \left(\BI-\frac{1}{2}e_{d+1}e_{d+2}i(a)\right).
\end{equation}

Consider the map $\tau_{ap}:Z_d^0 \to Z_d^0$ defined by
\[ \tau_{ap}(x)= \left(\BI+\frac{1}{2}e_{d+1}e_{d+2}i(p)\right)\beta x \alpha \left(\BI-\frac{1}{2}e_{d+1}e_{d+2}i(a)\right).\]
Since $\tau_{ap}^{-1}(x)=\beta^{-1}\left(\BI-\frac{1}{2}e_{d+1}e_{d+2}i(p)\right) x \left(\BI+\frac{1}{2}e_{d+1}e_{d+2}i(a)\right)\alpha^{-1}$, we note that $\tau_{ap}$ is a linear bijection.

We claim that $F_{ap}\cap F_{bq} \cap H_0 \cap \ker\left(\eta'_d\right)$ is generated by
\begin{align}
\bigg\{\tau(f) :\ f \mbox{ is an element of the standard basis of } C\ell_{d-1}^0 \text{ and an } m\text{-term for some } m\geq 4\bigg\}. \label{eq:KernelGeneratorM4}
\end{align}
Indeed, for any such $m$-term $f$, Lemmas \ref{le:mTermsConj}, \ref{le:TermsedAed}, and \ref{le:TermsSize} imply that
\[ \tau(f) = \left(\BI+\frac{1}{2}e_{d+1}e_{d+2}i(p)\right)\beta f \alpha \left(\BI-\frac{1}{2}e_{d+1}e_{d+2}i(a)\right)\in H_0\cap \ker(\eta'_d). \]
By \eqref{eq:FIntForKernelBasis}, this expression is also in $F_{ap}\cap F_{bq}$.

If $f \in Z_d^0$ contains a 0-term or a 2-term, then Lemmas \ref{le:mTermsConj}, \ref{le:TermsedAed}, and \ref{le:TermsSize} imply that $\tau_{ap}(f) \notin F_{ap}\cap F_{bq} \cap H_0\cap \ker(\eta'_d)$.
That is, if $g\in F_{ap}\cap F_{bq} \cap H_0\cap \ker(\eta'_d)$ then $\tau_{ap}^{-1}(g)$ contains no 0-term or 2-terms.
We conclude that \eqref{eq:KernelGeneratorM4} generates $F_{ap}\cap F_{bq} \cap H_0\cap \ker(\eta'_d)$.
Since $\tau(f)$ is a bijection, the set \eqref{eq:KernelGeneratorM4} is a linearly independent subset of $F_{ap}\cap F_{bq} \cap H_0\cap \ker(\eta'_d)$.
This implies that $\dim(F_{ap}\cap F_{bq} \cap H_0\cap \ker(\eta'_d)) = 2^{d-2} - \binom{d-1}{2} - 1$, which completes the proof.
\end{proof}

We are now ready to state the connection between the distinct distances problem and the flats $L_{ap}$.
Let $Q'$ be the set of quadruples $(a,p,b,q)\in \pts^4$ such that $T_{ap}\cap T_{bq}\not\subseteq H_0$.
In particular, note that $(a,p,b,q)\in Q'$ implies that $T_{ap}\cap T_{bq} \neq \emptyset$.

\begin{corollary} \label{co:TapTbqIntCharRd}
We have that $Q'\subset Q$ and $|Q'|\ge |Q|/2$.
\end{corollary}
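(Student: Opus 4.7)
The plan is to split the corollary into two parts: the inclusion $Q'\subseteq Q$, and the cardinality bound $|Q'|\geq |Q|/2$. For the inclusion I argue directly: any $(a,p,b,q)\in Q'$ supplies some $x\in T_{ap}\cap T_{bq}$ outside $H_0$, and Theorem \ref{th:SpunToSE} says $\rho(x)\in\SE{d}$ is a proper rigid motion sending $a\mapsto p$ and $b\mapsto q$, which automatically forces $|ab|=|pq|$. The only snag is the degenerate case $a=b$, where necessarily $p=\rho(x)(a)=\rho(x)(b)=q$; the resulting tuples $(a,p,a,p)$ number at most $n^2$ and are negligible next to $|Q|\geq n^4/(4D)$ from \eqref{eq:lowerQRd} (after the harmless assumption $D\ll n^2$), so after discarding them the inclusion holds.

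\textbf{Isolating the bad quadruples.} For the inequality I start with any $(a,b,p,q)\in Q$. Since $|ab|=|pq|>0$, some $R\in\SE{d}$ realizes $R(a)=p$, $R(b)=q$, and any preimage $x\in\rho^{-1}(R)$ lies in $T_{ap}\cap T_{bq}$, so the intersection is nonempty. Because $a\neq b$ and $p\neq q$, Lemma \ref{le:FewBadPairs} applies and gives $T_{ap}\cap T_{bq}\subseteq H_0$ iff $a-b=q-p$. Hence the tuples of $Q$ whose reordering $(a,p,b,q)$ fails to lie in $Q'$ are exactly the ``bad'' set
\[
B=\{(a,b,p,q)\in Q:\ a-b=q-p\},
\]
and the problem reduces to proving $|B|\leq |Q|/2$.

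\textbf{The counting argument.} For $d\in\RR^d\setminus\{0\}$ I set $N(d)=|\{(x,y)\in\pts^2:\,y-x=d\}|$, and for $\delta>0$ I set $M(\delta)=|\{(x,y)\in\pts^2:\,|xy|=\delta\}|=\sum_{|d|=\delta}N(d)$. Swapping endpoints of a pair gives $N(-d)=N(d)$, so
\[
|B|=\sum_{d\neq 0}N(d)N(-d)=\sum_{d\neq 0}N(d)^2,\qquad |Q|=\sum_{\delta>0}M(\delta)^2.
\]
Expanding $M(\delta)^2=\sum_{|d|=|d'|=\delta}N(d)N(d')$ and retaining only the diagonal $d'=d$ and the antipodal $d'=-d$ contributions (which are distinct since $d\neq 0$) yields $M(\delta)^2\geq 2\sum_{|d|=\delta}N(d)^2$; summing over $\delta$ gives $|Q|\geq 2|B|$, and hence $|Q'|\geq |Q|-|B|\geq|Q|/2$.

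\textbf{Main obstacle.} The substantive point is getting the factor of $2$, which is sharp (for collinear point sets a direct computation shows $|B|/|Q|\to 1/2$) and comes precisely from the antipodal symmetry $N(d)=N(-d)$ producing two distinct copies in the expansion of $M(\delta)^2$. Without that symmetry one would only obtain $|B|\leq |Q|$, which would not suffice to conclude $|Q'|=\Theta(|Q|)$.
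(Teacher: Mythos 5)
Your inclusion argument is essentially the paper's (under the hood the paper also just uses that $T_{ap}\cap T_{bq}\neq\emptyset$ forces a rigid motion and hence $|ab|=|pq|$), but you are more careful: the paper declares $Q'\subseteq Q$ directly from $T_{ap}\cap T_{bq}\not\subseteq H_0 \Rightarrow T_{ap}\cap T_{bq}\neq\emptyset$, silently ignoring the degenerate $a=b$ (hence $p=q$) quadruples that you flag and discard, and which are indeed negligible once the corollary is used against \eqref{eq:lowerQRd}.

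For the cardinality bound your route is genuinely different. The paper uses a pairing argument: if $(a,p,b,q)\in Q\setminus Q'$, then by Lemma \ref{le:FewBadPairs} one has $a-b=q-p$, and applying the same lemma to the swapped quadruple $(b,p,a,q)$ shows $T_{bp}\cap T_{aq}\subseteq H_0$ would force $b-a=q-p$ and hence $a=b$, a contradiction; so $(a,p,b,q)\mapsto(b,p,a,q)$ is an injection from $Q\setminus Q'$ into $Q'$, giving $|Q\setminus Q'|\le|Q'|$. (The paper's intermediate line ``$b-a\neq p-q$'' is a sign slip --- in fact $b-a=p-q$; the relevant inequality is $b-a\neq q-p$.) You instead recast the problem as an $\ell^2$-counting of difference vectors: with $N(d)$ the number of ordered pairs with difference $d$, you identify the bad set $B$ with $\sum_{d\neq 0}N(d)^2$ and lower-bound $|Q|=\sum_\delta M(\delta)^2$ by keeping only the $d'=d$ and $d'=-d$ diagonals, exploiting $N(d)=N(-d)$. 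Both proofs are correct and rest on the same two ingredients --- Lemma \ref{le:FewBadPairs} and an ``antipodal'' symmetry (the paper's $a\leftrightarrow b$ swap is precisely the combinatorial shadow of your $N(d)=N(-d)$) --- but the paper's injection is shorter and more directly combinatorial, while your computation makes the origin and sharpness of the factor $2$ fully explicit.
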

\begin{proof}
Recall that a quadruple $(a,p,b,q)\in \pts^4$ is in $Q$ if and only if $T_{ap} \cap T_{bq} \neq \emptyset$.
Since $T_{ap}\cap T_{bq}\not\subseteq H_0$ implies $T_{ap}\cap T_{bq} \neq \emptyset$, we have that $Q'\subseteq Q$.
It remains to show that at least half of the quadruples of $Q$ are also in $Q'$.
Consider $T_{ap} \neq T_{bq}$ such that $T_{ap} \cap T_{bq} \subseteq H_0$.
By Lemma \ref{le:FewBadPairs} we have that $a-b = q-p$.
This implies that $b-a \neq p-q$, so $T_{bp}\cap T_{aq} \not \subseteq H_0$ (since $|ab| = |pq|$, we get that $T_{bp}\cap T_{aq} \neq \emptyset$).
That is, for every quadruple $(a,p,b,q)\in Q$ not in $Q'$ there exists a distinct quadruple $(b,p,a,q)$ that is in $Q'$.
\end{proof}

\parag{Flats in $\RR^{\binom{d+1}{2}}$ and in $\RR^{2d-1}$.}
We set
\[ \lines = \{L_{ap} :\ a,p\in \pts \text{ and } a\neq p \}.\]
Note that $\lines$ is a set of $\Theta(n^2)$ flats of dimension $\binom{d}{2}$ in $\RR^{\binom{d+1}{2}}$.
By Corollary \ref{co:TapTbqIntCharRd}, to get an asymptotic upper bound for the number of quadruples in $Q$ it suffices to derive an upper bound for the number of quadruples $(a,p,b,q)\in \pts^4$ such that $T_{ap}\cap T_{bq} \not\subseteq H_0$.
By Lemma \ref{le:LapbqDimRd} every such quadruple satisfies $\dim L_{ap}\cap L_{bq} = \binom{d-1}{2}$.
On the other hand, when $T_{ap}\cap T_{bq} \subseteq H_0$ we have that $L_{ap}\cap L_{bq}=\emptyset$.
Thus, it remains to derive an upper bound on the number of pairs of flats of $\lines$ that intersect (in a $\binom{d-1}{2}$-flat).

The proof of the following lemma is identical to the proof of Lemma \ref{le:3flatsRest}.

\begin{lemma} \label{le:3flatsRestRd}
(a) Every point of $\RR^{\binom{d+1}{2}}$ is contained in at most $n$ flats of $\lines$. \\
(b) Every hyperplane in $\RR^{\binom{d+1}{2}}$ contains at most $n$ flats of $\lines$.
\end{lemma}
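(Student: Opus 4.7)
My plan is to repeat the very short argument given for Lemma \ref{le:3flatsRest} verbatim, with each ingredient replaced by its $\RR^d$ analog developed earlier in Section \ref{sec:DistancesRd}. The entire content of the lemma is already packaged into two auxiliary facts, and both parts drop out with one line each.

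For part (a), I fix three distinct points $a,p,q\in \pts$ and observe that a single proper rigid motion of $\RR^d$ cannot send $a$ simultaneously to two distinct points $p$ and $q$, so $T_{ap}\cap T_{aq}=\emptyset$. To convert this disjointness to $L_{ap}\cap L_{aq}=\emptyset$, I note that $T_{ap}$ is closed under negation (the elements $x$ and $-x$ represent the same rigid motion via $\rho$) and that $\eta_d(x)=\eta_d(-x)$, so $L_{ap}=\eta_d(T_{ap}\cap \Spun{d}_+)$. Corollary \ref{co:EtaDInjective} then carries the disjointness of $T_{ap}\cap \Spun{d}_+$ and $T_{aq}\cap \Spun{d}_+$ over to their images. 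Fixing any point $x\in \RR^{\binom{d+1}{2}}$, for each $a\in \pts$ there is at most one $p\in \pts\setminus\{a\}$ with $x\in L_{ap}$, so at most $n$ flats of $\lines$ pass through $x$.

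For part (b), I invoke the $\RR^d$ extension of Corollary \ref{co:EmptyTIntersection}, which the excerpt indicates follows without change from the straightforward $\RR^d$ extensions of Lemmas \ref{le:EmptyIntTCond} and \ref{le:ContainingFlat}. It says that $T_{ap}\cap T_{aq}=\emptyset$ implies that no hyperplane contains both $L_{ap}$ and $L_{aq}$. Fixing a hyperplane $H\subset \RR^{\binom{d+1}{2}}$ and a point $a\in \pts$, the same disjointness argument as in part (a) shows that at most one $p\in \pts\setminus\{a\}$ can satisfy $L_{ap}\subseteq H$; summing over $a$ gives the bound of $n$. There is essentially no obstacle here — all the substantive work was absorbed into the construction of $\eta_d$, its injectivity on $\Spun{d}_+$, and the structure of $F_{ap}\cap F_{bq}$. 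The only mild bookkeeping point is the identification $L_{ap}=\eta_d(T_{ap}\cap \Spun{d}_+)$, which follows from the projective character of $\eta_d$ together with $T_{ap}=-T_{ap}$.
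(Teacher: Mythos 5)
Your proof is correct and follows essentially the same route as the paper: the key fact $T_{ap}\cap T_{aq}=\emptyset$ gives part (a) via the injectivity of $\eta_d$ on $\Spun{d}_+$ (Corollary \ref{co:EtaDInjective}), which is exactly the detail the paper compresses into ``this immediately implies part (a),'' and part (b) follows from the $\RR^d$ extension of Corollary \ref{co:EmptyTIntersection} exactly as in the paper.
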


Note that $\binom{d+1}{2}-\binom{d-1}{2} = 2d-1$ and that $\binom{d}{2}-\binom{d-1}{2} = d-1$.
Let $H_g$ be a generic $(2d-1)$-flat in $\RR^{\binom{d+1}{2}}$, in the sense that:
\begin{itemize}[topsep=0pt,itemsep=-1ex]
\item Every $\binom{d}{2}$-flat of $\lines$ intersects $H_g$ in a $(d-1)$-flat.
 \item Every $\binom{d-1}{2}$-flat of the form $L_{ap}\cap L_{bq}$ (with $a,b,p,q\in \pts$) intersects $H_g$ at a single point.
\end{itemize}

Let $\flats = \{L_{ap} \cap H_g :\, L_{ap}\in \lines\}$ and consider $H_g$ as $\RR^{2d-1}$.
Note that $\flats$ is a set of $\Theta(n^2)$ distinct $(d-1)$-flats.
Every two $(d-1)$-flats of $\flats$ are either disjoint or intersect in a single point.
By Lemma \ref{le:3flatsRestRd}, every point of $\RR^{2d-1}$ is incident to at most $n$ of the flats of $\flats$ and every hyperplane in $\RR^{2d-1}$ contains at most $n$ of the flats of $\flats$.

For every integer $k\ge 2$, let $m_k$ denote the number of points of $\RR^{2d-1}$ that are contained in exactly $k$ of the $(d-1)$-flats of $\flats$.
Similarly, let $m_{\ge k}$ denote the number of points of $\RR^{2d-1}$ that are contained in \emph{at least} $k$ of the $(d-1)$-flats of $\flats$.
Then $|Q'|$ is the number of pairs of intersecting $(d-1)$-flats of $\flats$, and
\[ |Q| \le 2|Q'|= 2\sum_{k=2}^n m_k \cdot 2\binom{k}{2} < 2\sum_{k=2}^n k^2 m_k = O\left( \sum_{k=1}^{\log n} 2^{2k} m_{\ge 2^k}\right). \]

If we had the bound $m_{\ge k} = O\left(\frac{n^{(4d-2)/d}}{k^{2+\eps}}\right)$ for some $\eps>0$, then the above would imply  $|Q|=O(n^{(4d-2)/d})$. This  would in turn imply that the points of $\pts$ span $\Omega\left(n^{2/d}\right)$ distinct distances.

An incidence result of Solymosi and Tao \cite{ST12} implies that the number of incidences between $m$ points and $n$ flats of dimension $d-1$ in $\RR^{2d-1}$, with every two flats intersecting in at most one point, is $O(m^{2/3+\eps'}n^{2/3}+m+n)$ (for any $\eps'>0$).
Every incidence bound of this form has a dual formulation involving $k$-rich points (for example, see \cite[Chapter 1]{ShefferBook}).
In this case, the dual bound is: Given $n^2$ flats of dimension $d-1$ in $\RR^{2d-1}$ such that every two intersect in at most one point, for every $k\ge 2$ the number of $k$-rich points is $O\left(\frac{n^{4/(1-\eps')}}{k^{3/(1-\eps')}}+\frac{n^2}{k}\right)$.
By taking $\eps'$ to be sufficiently small with respect to $\eps$, we obtain the bound $m_{\ge k} = O\left(\frac{n^{4+\eps}}{k^{3}}+\frac{n^2}{k}\right)$ for the number of $k$-rich points.
This bound is stronger than the required bound when $k=\Omega(n^{2/d+\eps})$.
That is, it remains to consider the case where $k=O(n^{2/d+\eps})$.

\section{The structure of the flats $L_{ap}$} \label{sec:structure}

In this section we study the structure of the $\binom{d}{2}$-flats $L_{ap}$ in $\RR^{\binom{d+1}{2}}$.
In particular, we derive the equations that define such a flat.
This structure is useful for deriving additional properties of the flats, which may be required for solving the incidence problem in Theorem \ref{th:DDreduction}.

Recall that we think of every coordinate of $\RR^{\binom{d+1}{2}}$ as corresponding to a 2-term in the standard basis of $Z_d^0$.
We denote the coordinate corresponding to $e_je_k$ as $x_{j,k}$, for every $1\le j < k \le d$.
Similarly, we denote the coordinate corresponding to $e_je_{d+1}e_{d+2}$ as $x_{j,d+1}$.
For $a\in \RR^d$, we denote by $a_j$ the $j$'th coordinate of $a$.

\begin{theorem} \label{th:FlatStructureRd}
Given $a,p \in \RR^d$, the flat $\eta_d(T_{ap}\cap \Spun{d}_+)$ is defined by the following system of $d$ equations in the coordinates of $\RR^{\binom{d+1}{2}}$.
\begin{align*}
a_1 - p_1 = & (a_2+p_2)x_{1,2}+(a_3+p_3)x_{1,3} + \cdots + (a_d+p_d)x_{1,d} + 2x_{1,d+1},\\
a_2 - p_2 = & -(a_1+p_1)x_{1,2}+(a_3+p_3)x_{2,3} + \cdots + (a_d+p_d)x_{2,d} + 2x_{2,d+1},\\
\vdots& \\
a_d - p_d = &-(a_1+p_1)x_{1,n}-(a_2+p_2)x_{2,d} - \cdots - (a_{d-1}+p_{d-1})x_{d-1,d} + 2x_{d,d+1}.
\end{align*}
\end{theorem}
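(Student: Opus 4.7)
The plan is to use the parametrization of $T_{ap}$ given by Lemma \ref{le:TapBasicRd}, namely
\[
x = \gamma + \tfrac{1}{2}e_{d+1}e_{d+2}\bigl(i(p)\gamma - \gamma i(a)\bigr), \qquad \gamma \in \Spin{d},
\]
and to extract the 2-term coefficients of $x$ explicitly. For $x \in T_{ap}\cap \Spun{d}_+$, the first coordinate $r$ of $\gamma$ is positive, and $\eta_d$ divides by $r$ and keeps the 2-term coordinates. So I need to read off, in terms of $r$ and of the 2-term coefficients $\lambda_{j,k}$ of $\gamma$, the coefficients of $e_je_k$ (for $1\le j<k\le d$) and of $e_me_{d+1}e_{d+2}$ (for $1\le m\le d$) in $x$, then divide by $r$.

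The first observation is that the terms $e_je_k$ with $j<k\le d$ in $x$ come entirely from $\gamma$, so after dividing by $r$ one obtains $x_{j,k} = \lambda_{j,k}/r$. By Lemma \ref{le:SpinSurject} these may be chosen arbitrarily, so these coordinates are free. The work is to find the coefficient of $e_me_{d+1}e_{d+2}$. Since $e_{d+2}$ commutes with every $e_j$ with $j\le d+1$ while $e_{d+1}$ anticommutes with $e_m$, we have $e_{d+1}e_{d+2}e_m = -e_me_{d+1}e_{d+2}$; hence this coefficient equals $-\tfrac{1}{2}$ times the coefficient of $e_m$ in $i(p)\gamma - \gamma i(a)$. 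A direct expansion using $e_\ell^2=-\BI$ and $e_\ell e_j = -e_je_\ell$ shows that the $e_m$-coefficient coming from $\lambda_{j,k}e_je_k$ inside $i(p)\gamma$ is $\lambda_{j,k}\bigl(p_k\delta_{j,m} - p_j\delta_{k,m}\bigr)$, and symmetrically $-\lambda_{j,k}\bigl(a_j\delta_{k,m} - a_k\delta_{j,m}\bigr)$ for $-\gamma i(a)$. Summing over $j<k$ and adding the contribution $r(p_m-a_m)$ from the $\BI$-part of $\gamma$ gives the $e_m$-coefficient of $i(p)\gamma - \gamma i(a)$ as
\[
r(p_m - a_m) \;-\; \sum_{j<m}(p_j+a_j)\lambda_{j,m} \;+\; \sum_{k>m}(p_k+a_k)\lambda_{m,k}.
\]

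Dividing by $r$, multiplying by $-\tfrac{1}{2}$, and rearranging yields exactly the $m$-th equation in the theorem for each $m\in\{1,\dots,d\}$. This shows $\eta_d(T_{ap}\cap\Spun{d}_+)$ is contained in the zero set of these $d$ equations. For the converse containment I would simply count dimensions: each equation contains a distinct variable $x_{m,d+1}$ which appears in no other equation, so the $d$ equations are linearly independent and cut out a flat of dimension $\binom{d+1}{2}-d = \binom{d}{2}$ in $\RR^{\binom{d+1}{2}}$. By Lemma \ref{le:flatDimension}, $\eta_d(T_{ap}\cap\Spun{d}_+)$ already has dimension $\binom{d}{2}$, so the containment must be an equality.

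The only nontrivial obstacle is the careful bookkeeping of signs, both in the reduction $e_{d+1}e_{d+2}e_m = -e_me_{d+1}e_{d+2}$ and in the cross-terms produced by $i(p)\cdot (e_je_k)$ versus $(e_je_k)\cdot i(a)$, which flip one sign on the $a$-contributions; everything else is linear algebra and a dimension count.
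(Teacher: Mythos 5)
Your proposal is correct, and it takes a genuinely different and more elementary route than the paper. The paper proves this theorem via the dual machinery developed in Section~\ref{sec:structure}: it represents the transpose $(\tau_{ap}^{-1})^t$ as a block matrix, constructs a system $S_{ap}$ of $2^{d-1}$ homogeneous equations whose solution set is $\tau_{ap}(C\ell_d^0) = F_{ap}$, extracts the subsystem $S'_{ap}$ of $d$ equations supported only on $0$- and $2$-term coordinates, and then proves a separate lemma (Lemma~\ref{le:SubsystemSuffices}) to show that passing to this subsystem does not change the image under $\eta_d$; only then does it read off the explicit equations. Your argument instead parametrizes $T_{ap}$ directly from Lemma~\ref{le:TapBasicRd} as $\gamma + \frac{1}{2}e_{d+1}e_{d+2}(i(p)\gamma - \gamma i(a))$, extracts the $2$-term coefficients of $x$ by a short Clifford-algebra calculation, and obtains one containment; the reverse containment then follows immediately by comparing dimensions via Lemma~\ref{le:flatDimension} (the $d$ equations are visibly independent since each introduces its own variable $x_{m,d+1}$, so they cut out a $\binom{d}{2}$-flat). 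I checked your sign bookkeeping: the $e_m$-coefficient of $i(p)\gamma - \gamma i(a)$ is indeed $r(p_m-a_m) - \sum_{j<m}(p_j+a_j)\lambda_{j,m} + \sum_{k>m}(p_k+a_k)\lambda_{m,k}$, the commutation $e_{d+1}e_{d+2}e_m = -e_m e_{d+1}e_{d+2}$ is correct, and after multiplying by $-\frac{1}{2}$, dividing by $r$, and substituting $x_{j,k}=\lambda_{j,k}/r$, the result rearranges exactly to the stated equations. Your approach buys brevity: it bypasses the duality/transpose framework and the somewhat delicate Lemma~\ref{le:SubsystemSuffices} entirely, at the cost of not giving the general principle (encoded in $S_{ap}$) that the paper later relies on as ``a framework for deriving more restrictions.'' One small point worth spelling out: the $0$-term of $x$ equals the $0$-term of $\gamma$ (since the correction term always carries $e_{d+1}e_{d+2}$), so $x\in\Spun{d}_+$ exactly when $\gamma\in\Spin{d}_+$, and the normalization by $r$ in $\eta_d$ is as you describe; you use this implicitly.
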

\begin{proof}
In the following proof, every reference to orthogonal elements is with respect to the standard inner product $\langle \cdot,\cdot\rangle$ of $\RR^{2^d}$.
For a vector $v\in \RR^{2^d}$, we denote the dual of $v$ as $v^*$.
That is, $v^*$ is the map $v^*(u) = \langle v,u\rangle$.
For linear maps $f,g : \RR^{2^d} \to \RR^{2^d}$, we denote by $f^t(g)(v)$ the transpose $g(f(v))$.

Consider the linear map $\tau_{ap}: Z_d^0 \to Z_d^0$ defined by
\[ \tau_{ap}(x) = \left(\BI+\frac{1}{2}e_{d+1}e_{d+2}i(p)\right)x\left(\BI-\frac{1}{2}e_{d+1}e_{d+2}i(a)\right). \]
We also observe that
\begin{equation} \label{eq:TauAPInverse}
\tau^{-1}_{ap}(x) = \left(\BI-\frac{1}{2}e_{d+1}e_{d+2}i(p)\right)x\left(\BI+\frac{1}{2}e_{d+1}e_{d+2}i(a)\right).
\end{equation}
Thus, $\tau_{ap}$ is a linear bijection.

\begin{lemma} \label{le:OrthConditionTau}
For $u,w\in Z_d^0$, we have that $u$ is orthogonal to $\tau_{ap}(w)$ if and only if $u = \left((\tau_{ap}^{-1})^t \circ v^*\right)^*$ for some $v\in \RR^{2^d}$ orthogonal to $w$.
\end{lemma}
\begin{proof}
Let $v\in \RR^{2^d}$ be orthogonal to $w$.
We have that\footnote{Strictly speaking, $(u^*)^*$ is not equal to $u$.
With a slight abuse of notation, we apply here the natural isomorphism between the space $\left({\RR^{2^d}}^*\right)^*$ and $\R^{2^d}$. }
\begin{align*}
\langle\left((\tau_{ap}^{-1})^t \circ v^*\right)^*,\tau_{ap}(w)\rangle =\left(\left((\tau_{ap}^{-1})^t\circ v^*\right)^*\right)^*\left(\tau_{ap}(w)\right) &=\left((\tau_{ap}^{-1})^t \circ v^*\right)\left(\tau_{ap}(w)\right) \\
&= \left(v^*\circ \tau_{ap}^{-1}\right)\left(\tau_{ap}(w)\right) = v^*(w)=\langle v,w\rangle = 0.
\end{align*}
That is, $\left((\tau_{ap}^{-1})^t \circ v^*\right)^*$ is orthogonal to $\tau_{ap}(w)$, as required.

For the other direction, assume that $u$ is orthogonal to $\tau_{ap}(w)$ and note that
\begin{align*}
u = \left(u^*\right)^* = \left((\tau_{ap}^{-1})^t \left( ((\tau_{ap}^{-1})^t)^{-1}\circ u^*\right)\right)^*.
\end{align*}
That is, $u = \left((\tau_{ap}^{-1})^t \circ v^*\right)^*$ for $v=\left( ((\tau_{ap}^{-1})^t)^{-1} \circ u^*\right)^*$.
We also have that
\begin{align*}
\langle v,w \rangle = \left\langle \left(((\tau_{ap}^{-1})^t)^{-1}\circ u^*\right)^*,w\right\rangle &= \left(\left(((\tau_{ap}^{-1})^t)^{-1} \circ u^*\right)^*\right)^*(w) =\left(((\tau_{ap}^{-1})^t)^{-1}\circ u^*\right) (w)\\
&=\left(\tau_{ap}^t(u^*)\right) (w) =u^* \left(\tau_{ap}(w)\right) = \langle u,\tau_{ap}(w)\rangle = 0.
\end{align*}
\end{proof}

Let $V_d^0$ be the orthogonal complement of $C\ell_d^0$ in $Z_d^0$.
Note that every term of every element of $V_d^0$ contains $e_{d+1}e_{d+2}$.
Lemma \ref{le:OrthConditionTau} implies that $\left((\tau_{ap}^{-1})^t \circ \left(V_d^0\right)^*\right)^*$ is the orthogonal complement of $\tau_{ap}\left(C\ell_d^0\right)$.
Let $I_{2^{d-1}}$ be the $2^{d-1}\times 2^{d-1}$ identity matrix.
We can express $(\tau_{ap}^{-1})^t$ as a $2^d\times 2^d$ matrix of the form\footnote{To write this matrix, we must choose a specific ordering of the dual elements of the standard basis of $Z_d^0$.
As long as the elements dual to the basis elements involving $e_{d+1}e_{d+2}$ come after those dual to those that do not, the details of the ordering do not matter.}
\begin{equation} \label{eq:MatrixTauAP}
 \matwo{I_{2^{d-1}}}{C}{0}{I_{2^{d-1}}}.
\end{equation}
Indeed, recall that taking the transpose of a linear transformation corresponds to taking the transpose of the matrix of this transformation.
Note that the columns of \eqref{eq:MatrixTauAP} with index greater than $2^{d-1}$ form a basis of $(\tau_{ap}^{-1})^t \circ (V_d^0)^*$.

We denote the coordinates of $Z_d^0 \cong \RR^{2^d}$ as $y_1,\ldots,y_{2^d}$.
Let $(v_1, \ldots,v_{2^d})^* \in \left(Z_d^0\right)^*$ be one of the basis vectors of $(\tau_{ap}^{-1})^t \circ (V_d^0)^*$ that are columns of \eqref{eq:MatrixTauAP}.
We associate with this vector the equation $v_1y_1+\hdots + v_{2^d}y_{2^d} = 0$.
\ignore{For any particular values $x_1,\hdots, x_{2^n}$ of the variables, this is equivalent to the equation
\begin{align*}
\begin{pmatrix}v_1&\cdots&v_{2^n}\end{pmatrix}\vecthree{x_1}{\vdots}{x_{2^n}} = 0
\end{align*}
}
Let $S_{ap}$ be the system of $2^{d-1}$ homogeneous linear equations that are obtained in this way from the column vectors of \eqref{eq:MatrixTauAP} with index greater than $2^{d-1}$.
Since $\left((\tau_{ap}^{-1})^t \circ \left(V_d^0\right)^*\right)^*$ is the orthogonal complement of $\tau_{ap}\left(C\ell_d^0\right)$, the set of solutions to $S_{ap}$ is $\tau_{ap}\left(C\ell_d^0\right)$.

We construct a system of homogeneous linear equations $S'_{ap}$ by taking a subset of the equations of $S_{ap}$, as follows.
Let $v_1y_1+\hdots + v_{2^d}y_{2^d} = 0$ be an equation of $S_{ap}$.
We add this equation to $S'_{ap}$ if for every nonzero coefficient $v_j$ the variable $y_j$ corresponds either to a 0-term or to a 2-term.
Let $F'_{ap}$ be the set of solutions to the system $S'_{ap}$.

\begin{lemma} \label{le:SubsystemSuffices}
$\eta_d(F'_{ap}\setminus H_0) = \eta_d(\tau_{ap}(C\ell_d^0)\setminus H_0)$.
\end{lemma}
\begin{proof}
As stated above, the set of solutions to $S_{ap}$ is $\tau_{ap}(C\ell_d^0)$.
Since $S'_{ap} \subset S_{ap}$, we get that $\tau_{ap}(C\ell_d^0)\subset F'_{ap}$.
This immediately implies $\eta_d(\tau_{ap}(C\ell_d^0)\setminus H_0) \subseteq \eta_d(F'_{ap}\setminus H_0)$.
It remains to prove that $\eta_d(F'_{ap}\setminus H_0) \subseteq \eta_d(\tau_{ap}(C\ell_d^0)\setminus H_0)$.

For a linear equation $w_1y_1+\hdots + w_{2^d}y_{2^d} = 0$, we set $w = (w_1,\ldots,w_{2^d})^*$ and $u = (u_1,\ldots,u_{2^d})^* = \tau_{ap}^t \circ w$.
If $z\in Z_d^0$ is a solution to $w_1y_1+\hdots + w_{2^d}y_{2^d} = 0$ then $w^*$ is orthogonal to $z$, which in turn implies that $\left(\tau_{ap}^t \circ w\right)^*$ is orthogonal to $\tau_{ap}^{-1}(z)$.
That is, $\tau_{ap}^{-1}(z)$ is a solution to $u_1y_1+\hdots +u_{2^d}y_{2^d} = 0$.
Conversely, if $z\in Z_d^0$ is a solution to $u_1y_1+\hdots +u_{2^d}y_{2^d} = 0$ (that is, $u^*$ is orthogonal to $z$) then  $\tau_{ap}(z)$ is orthogonal to $\left((\tau_{ap}^{-1})^t \circ u\right)^* = \left((\tau_{ap}^t)^{-1} \circ u\right)^* = w^*$.
We conclude that $\tau_{ap}^{-1}$ is a bijection from the solutions to $w_1y_1+\hdots + w_{2^d}y_{2^d} = 0$ to the solutions to $u_1y_1+\hdots +u_{2^d}y_{2^d} = 0$.

Recall that every equation of $S_{ap}$ is defined by a dual vector $v\in (\RR^d)^*$ of the form $(\tau_{ap}^{-1})^t \circ  (\gamma e_{d+1}e_{d+2})^*$, where $\gamma e_{d+1}e_{d+2}$ is a basis vector of $V^0_d$ (that is, $\gamma$ is in the standard basis of $C\ell_d^1$).
Every non-zero term of such a vector corresponds to a 0-term or a 2-term if and only if $v^*\in \RR^d$ is orthogonal to every vector corresponding to an $m$-term for some $m \ge 4$.
Let $w\in \RR^d$ be a vector that corresponds to such an $m$-term.
If $\gamma = e_j$ for some $1\le j \le d$, then Lemma \ref{le:TermsSize} implies that $\tau^{-1}_{ap}(w)$ is orthogonal to $\gamma e_{d+1}e_{d+2}$.
Lemma \ref{le:OrthConditionTau} states that $((\tau^{-1}_{ap})^t \circ (e_j e_{d+1}e_{d+2})^*)^*$ is orthogonal to $\tau_{ap}((\tau_{ap}^{-1})^t(w))= w$.
That is, when $\gamma = e_j$ the equation defined by $v$ is in $S'_{ap}$.

Next, assume that $\gamma e_{d+1}e_{d+2}$ is an $m$-term with $m \ge 4$, and write $u=\gamma e_{d+1}e_{d+2}$.
Lemma \ref{le:TermsSize} implies that $\tau_{ap}(u)$ contains neither 2-terms nor a 0-term.
If $((\tau_{ap}^{-1})^t \circ u^*)^*$ is orthogonal to $\tau_{ap}(u)$ then by (the other direction of) Lemma \ref{le:OrthConditionTau} we get that $u$ is orthogonal to $u$.
This contradiction implies that $((\tau_{ap}^{-1})^t \circ u^*)^*$ is not orthogonal to $\tau_{ap}(u)$, so in this case the equation defined by $v$ is not in $S'_{ap}$.

Combining the two preceding paragraphs implies that the equations of $S'_{ap}$ are determined by the vectors $(\tau_{ap}^{-1})^t \circ \left((e_je_{d+1}e_{d+2})^*\right)$ for $1\le j\le d$.
It follows that the equations of $S'_{00}$ are obtained from those defining $S'_{ap}$ by applying $\tau_{ap}^t$ to the coefficient vectors.
By the second paragraph of this proof, for every $v\in F'_{ap}$ we have that $\tau_{ap}^{-1}(v)\in F'_{00}$.

When $a= p=0$, we have that \eqref{eq:MatrixTauAP} is the identity matrix.
Thus, each equation of $S_{00}$ consists of a single term.
This in turn implies that $F'_{00}$ is the subspace defined by having 0 in every coordinate that corresponds to a 2-term of the form $e_je_{d+1}e_{d+2}$ (where $1\le j \le d$).
For $v\in F'_{ap}\setminus H_0$, we obtain that $\tau_{ap}^{-1}(v)$ contains no terms of the form $e_je_{d+1}e_{d+2}$.
By Lemmas \ref{le:JdTwoVectorsRule} and \ref{le:JdSurjective}, there is a unique $x \in J_d$ with the property that $x - \tau_{ap}^{-1}(v)$ contains no 0-term and no 2-terms.
Note that $x$ also contains no terms of the form $e_je_{d+1}e_{d+2}$, so Lemma \ref{le:SpinSurject} implies that $x \in G_d$.
Since $x\in C\ell_d^0$, we have that $\tau_{ap}(x) \in \tau_{ap}\left(C\ell_d^0\right)$.
By Lemma \ref{le:TermsSize}, the expression $\tau_{ap}\left(x- \tau_{ap}^{-1}(v)\right)$ also contains no 0-term and no 2-terms, so $\eta_d(\tau(x)) = \eta_d(v)$.
That is, there exists $\tau_{ap}(x) \in \tau_{ap}\left(C\ell_d^0\right)$ such that $\eta_d(\tau_{ap}(x)) = \eta_d(v)$.
Since $v\notin H_0$, we have that $\tau_{ap}^{-1}(v)\notin H_0$, which in turn implies that $x\notin H_0$ and that $\tau(x) \notin H_0$.
This implies that $\eta_d(F'_{ap}\setminus H_0) \subseteq \eta_d(\tau_{ap}(C\ell_d^0)\setminus H_0)$ and completes the proof.
\end{proof}

By Lemma \ref{le:SubsystemSuffices}, to complete the proof of Theorem \ref{th:FlatStructureRd} it suffices to study $\eta_d(F'_{ap}\setminus H_0)$.
We move from the coordinate system $y_j$ to the coordinate system $x_{j,k}$, as described before the statement of the theorem.
We denote by $x_1$ the coordinate corresponding to the coefficient of $\BI$ (that is, $y_1$).

We now study the equations of $S'_{ap}$.
As discussed in the proof of Lemma \ref{le:SubsystemSuffices}, these equations correspond to the dual vectors $(\tau_{ap}^{-1})^t \circ (e_je_{d+1}e_{d+2})^*$ for $1\le j\le d$.
If $e_j e_{d+1}e_{d+2}$ is the $k$'th element in our ordering of the basis of $Z^0_d$, then $(\tau_{ap}^{-1})^t \circ (e_je_{d+1}e_{d+2})^*$ is the $k$'th column of the matrix \eqref{eq:MatrixTauAP}.
Since the transpose of a linear transformation corresponds to the transpose of the matrix of the transformation, the above is also the $k$'th row of the matrix of $\tau_{ap}^{-1}$.
To get this row, we apply $\tau_{ap}^{-1}$ to the basis vectors of $Z^0_d$ and then keep the coefficient of $e_j e_{d+1}e_{d+2}$ (recall that $\tau_{ap}^{-1}$ is defined in \eqref{eq:TauAPInverse}).
The only basis vectors of $Z^0_d$ for which this coefficient is nonzero are $\BI$ and 2-terms involving $e_j$.
Repeating this process for every $1\le j\le d$ leads to the following system.
\begin{align*}
(a_1 - p_1)x_1 = & (a_2+p_2)x_{1,2}+(a_3+p_3)x_{1,3} + \cdots + (a_d+p_d)x_{1,d} + 2x_{1,d+1},\\
(a_2 - p_2)x_1 = & -(a_1+p_1)x_{1,2}+(a_3+p_3)x_{2,3} + \cdots + (a_d+p_d)x_{2,d} + 2x_{2,d+1},\\
\vdots& \\
(a_d - p_d)x_1 = &-(a_1+p_1)x_{1,d}-(a_2+p_2)x_{2,d} - \cdots - (a_{d-1}+p_{d-1})x_{d-1,d} + 2x_{d,d+1}.
\end{align*}

Recall from the beginning of Section \ref{sec:DistancesRd} that $\eta_d = \pi' \circ \pi_1$.
Since the above is a system of homogeneous linear equations, $F'_{ap}$ is spanned by lines that are incident to the origin.
This implies that $\pi(F'_{ap}\setminus H_0) = \pi(F'_{ap}\cap H_1)$.
Thus, $\pi(F'_{ap}\setminus H_0)$ is the set of solutions to the system obtained by setting $x_1 = 1$:
\begin{align} \
(a_1 - p_1) = & (a_2+p_2)x_{1,2}+(a_3+p_3)x_{1,3} + \cdots + (a_d+p_d)x_{1,d} + 2x_{1,d+1}, \nonumber \\
(a_2 - p_2) = & -(a_1+p_1)x_{1,2}+(a_3+p_3)x_{2,3} + \cdots + (a_d+p_d)x_{2,d} + 2x_{2,d+1}, \nonumber \\
\vdots&  \label{eq:SystemF'ap} \\
(a_d - p_d) = &-(a_1+p_1)x_{1,d}-(a_2+p_2)x_{2,d} - \cdots - (a_{d-1}+p_{d-1})x_{d-1,d} + 2x_{d,d+1}.  \nonumber
\end{align}

Since none of the variables $x_{j,k}$ correspond to elements of $\RR^{2^d-1}$ that are in the kernel of $\pi'$, we get that $\eta_d(F'_{ap})$ is the solution set of \eqref{eq:SystemF'ap}.
\end{proof}

\section{Properties of the 2-flats in $\RR^5$} \label{sec:FlatsR5}

In this section we study the 2-flats in $\RR^5$ that are obtained from our reduction of the three-dimensional distinct distances problem.
In particular, we show how one can bound the number of 2-flats contained in constant-degree three- and four-dimensional varieties.
We also show how one can bound the number of 2-flats that have a one-dimensional intersection with a constant-degree two-dimensional variety.
Deriving these results requires several definitions and tools from Algebraic Geometry, and these are described in Section \ref{ssec:VarPrelim}.

\subsection{Algebraic Geometry preliminaries} \label{ssec:VarPrelim}

In the following, $\FF$ could be taken to be either $\CC$ or $\RR$.
The \emph{variety} defined by the polynomials $f_1,\ldots,f_k\in \FF[x_1,\ldots,x_d]$ is
\[ \vb(f_1,\ldots,f_k) = \left\{(a_1,\ldots,a_d)\in \FF^d  : f_j(a_1,\ldots,a_d)=0  \text{ for all }  1\le j \le k\right\}. \]

There are several non-equivalent definitions for the degree of a variety in $\RR^d$.
For our purposes, we say that a variety $U\subset \RR^d$ is \emph{defined at degree} $D$ if
\begin{equation*}
D = \min_{f_1,\ldots,f_k \in \RR[x_1,\ldots,x_d] \atop \vb(f_1,\ldots,f_k) = U} \max_{1 \le i \le k} \deg f_i.
\end{equation*}
In other words, $D$ is the minimum integer such that $U$ can be defined by a finite set of polynomials of degree at most $D$.
When using any reasonable notion for the degree of $U$, this degree is bounded by a constant if and only if $D$ is bounded by a constant.
In light of this, we say that a real variety $U$ is a \emph{constant-degree variety} when the degree at which $U$ is defined is bounded by a constant.

A variety $U\subseteq \FF^d$ is \emph{reducible} if there exist two proper subvarieties $U', U'' \subset U$
such that $U = U' \bigcup U''$. Otherwise, $U$ is \emph{irreducible}.
An \emph{irreducible component} of $U$ is an irreducible variety that is contained in $U$, and not contained in any other irreducible subvariety of $U$.

\begin{lemma} \label{le:BoundedNumComponents}
Let $U\subset \RR^d$ be a variety defined at degree $k$.
Then the number of irreducible components of $U$ is $O_{d,k}(1)$.
\end{lemma}

Intuitively, we say that a variety $U\subset \RR^d$ has dimension $k$ if there exists a subset of $U$ that is homeomorphic to the open $k$-dimensional cube, but no subset of $U$ is homeomorphic to an open cube of a larger dimension.
For more information about varieties in $\RR^d$ and a more precise definition of dimension, see for example \cite{BCR98}.

\parag{Singular points, regular points, and tangent flats.}
The \emph{ideal} of a variety $U\subseteq \RR^d$, denoted $\ib(U)$, is the set of polynomials in $\RR[x_1,\ldots,x_d]$ that vanish on every point of $U$.
We say that a set of polynomials $f_1,\ldots,f_\ell \in \RR[x_1,\ldots,x_d]$ \emph{generate} $\ib(U)$ if every element of $\ib(U)$ can be written as $\sum_{j=1}^\ell f_j g_j$ for some $g_1,\ldots,g_\ell \in \RR[x_1,\ldots,x_d]$.
We also write $\langle f_1,\ldots,f_\ell \rangle = \ib(U)$ to state that $f_1,\ldots,f_\ell$ generate $\ib(U)$.

The \emph{Jacobian matrix} of a set of polynomials $f_1,\ldots,f_k \in \RR[x_1,\ldots,x_d]$ is

\[ \jb_{f_1,\ldots,f_k} = \left( \begin{array}{cccc}
\frac{\partial f_1}{\partial x_1} & \frac{\partial f_1}{\partial x_2} & \cdots & \frac{\partial f_1}{\partial x_d} \\[2mm]
\frac{\partial f_2}{\partial x_1} & \frac{\partial f_2}{\partial x_2} & \cdots & \frac{\partial f_2}{\partial x_d} \\[2mm]
\cdots & \cdots & \cdots & \cdots \\[2mm]
\frac{\partial f_k}{\partial x_1} & \frac{\partial f_k}{\partial x_2} & \cdots & \frac{\partial f_k}{\partial x_d} \end{array}\right)\]

Consider a variety $U\subset\RR^d$ of dimension $k$, and let $f_1,\ldots,f_\ell \in \RR[x_1,\ldots,x_d]$ satisfy $\langle f_1,\ldots,f_\ell \rangle = \ib(U)$.
We say that $p\in U$ is a \emph{singular} point of $U$ if $\rank \jb(p) <d-k$.
A point of $U$ that is not singular is said to be a \emph{regular} point of $U$.
We denote the set of singular points of $U$ as $U_\text{sing}$, and the set of regular points of $U$ as $U_\text{reg}$.
A $k$-dimensional variety has a unique well-defined tangent $k$-flat at every regular point.
We denote the tangent $k$-flat at $p\in U$ as $T_p U$, and think of it as a linear subspace (that is, as incident to the origin).
At singular points of a real variety, a unique well-defined tangent flat may or may not exist.

\begin{theorem} \label{th:Singular}
Let $U\subset\RR^d$ be a variety defined at degree $k$ and dimension $d'$.
Then $U_\text{sing}$ is a variety of dimension smaller than $d'$ and is defined at degree $O_{k,d}(1)$.
\end{theorem}

References for the above claims and additional information can be found, for example, in \cite{BCR98}.

\parag{Complexification.}
Given a variety $U \subset \RR^d$, the \emph{complexification} $U^*\subset \CC^d$ of $U$ is the smallest complex variety that contains $U$, in the sense that any other complex variety that contains $U$ also contains $U^*$ (for example, see \cite{RV02,Whit57}).
As shown in \cite[Lemma 6]{Whit57}, such a complexification always exists, and $U$ is precisely the set of real
points of $U^*$.

As shown in \cite[Section 10]{Whit57}, there is a bijection between the
irreducible components of $U$ and the irreducible components of $U^*$,
such that each real component is the real part of its corresponding complex component.
In particular, the complexification of an irreducible variety is irreducible.
The real dimension of a real irreducible component in $\RR^d$ is equal to the complex dimension of the corresponding complex component in $\CC^d$.

\parag{Constructible sets, semi-algebraic sets, and projections.}
As before, $\FF$ could be taken to be either $\CC$ or $\RR$.
If $U\subset \FF^d$ is a set, the \emph{Zariski closure} $\overline{U}$ is the smallest variety in $\FF^d$ that contains $U$.
A set $X\subset\FF^d$ is \emph{constructible} if there exist non-empty varieties $X_1,\ldots,X_\ell \subset \FF^d$ such that $\dim X_{j+1}<\dim X_j$ for every $1\le j < \ell$ and
\begin{equation}\label{eq:Const}
 X=\Big(\big((X_1\backslash X_2) \cup X_3\big)\backslash X_4\ldots\Big).
\end{equation}

We define $\dim(X) = \dim(\overline{X})=\dim(X_1)$.
We define the \emph{complexity} of $X$ to be $\min(\deg(X_1)+\deg(X_2)+\ldots+\deg(X_\ell)),$ where the minimum is taken over all representations of $X$ of the form \eqref{eq:Const}.
This definition is not standard.
However, since we are interested only in constructible sets of bounded complexity, any reasonable definition of complexity would work equally well.
For further details, see for example \cite[Chapter 3]{Harris92}.

A \emph{semi-algebraic set} in $\RR^d$ is the set of points in $\RR^d$ that satisfy a given finite Boolean combination
of polynomial equations and inequalities in $d$ coordinates.
Every constructible set in $\RR^d$ is semi-algebraic.
On the other hand, a semicircle in $\RR^2$ is semi-algebraic but not constructible.

Let $S\subset\RR^d$ be a semi-algebraic set defined by a Boolean combination of equations and inequalities involving the polynomials $f_1,\ldots,f_k\in \RR[x_1,\ldots,x_d]$ (that is, the $j$'th equation or inequality has zero on one side and $f_j$ on the other).
The dimension of $S$ is the dimension of the real variety $\overline{S}$.
The complexity of $S$ is $\min\{\deg f_1 + \cdots + \deg f_k\}$, where the minimum is taken over all Boolean combinations that define $S$.
Note that the degree at which the variety $\overline{S}$ is defined is at most the complexity of $S$.

One can also include the quantifiers $\forall$ and $\exists$ in the definition of a semi-algebraic set, each quantifying an additional variable that is not a coordinate of the points in the set.
For every definition of a semi-algebraic set using quantifiers, there exists a definition that does not use quantifiers.
For example, the formula $\forall t : (t> 0) \vee (x+y>t)$ defines an open half-plane in $\RR^2$, which can easily be defined without the $\forall$ quantifier.
In the above definition of the complexity, one may only use definitions of $S$ that do not include such quantifiers.
For the following, see for example \cite[Section 11.3]{BPR07}.

\begin{lemma} \label{le:QuantElim}
Let $S\subset \RR^d$ be a semi-algebraic set using $k$ quantified variables, and $s$ polynomials of degree at most $D$.
Then $S$ is of complexity $O_{k,s,D,d}(1)$.
\end{lemma}

Both in $\RR^d$ and in $\CC^d$, the projection of a variety is not necessarily a variety.
In $\RR^d$, the projection of a constructible set is not necessarily constructible.
The following result states properties that are satisfied by every projection.
For part (a), see for example \cite[Theorem 3.16]{Harris92} (this reference only says that the projection of a constructible set is constructible. However, the proof is constructive and thus gives us a bound on the complexity of the projection.)
Part (b) is implied by Lemma \ref{le:QuantElim}, noting that the projection of a semi-algebraic set can be obtained by adding $\exists$ quantifiers to its definition.

\begin{theorem} \label{th:Projection} $\quad$ \\
(a) Let $X\subset \CC^d$ be a constructible set of dimension $d'$ and complexity $k$.
Let $\pi:\CC^d \to \CC^e$ be a projection on $e$ out of the $d$ coordinates of $\CC^d$.
Then $\pi(X)$ is a constructible set of dimension at most $d'$ and complexity $O_{k,d}(1)$. \\
(b) Let $U\subset \RR^d$ be a semi-algebraic set of dimension $d'$ and complexity $k$.
Let $\pi:\RR^d \to \RR^e$ be a projection on $e$ out of the $d$ coordinates of $\RR^d$.
Then $\overline{\pi(U)}$ is a variety of dimension at most $d'$ and is defined at degree $O_{k,d}(1)$.
\end{theorem}

For $d> d'$, let $X\subset\CC^{d}$ be a constructible set and let $Y\subset\CC^{d^\prime}$ be an irreducible variety.
Let $\pi\colon \CC^{d}\to\CC^{d^\prime}$ be a projection onto $d^\prime$ coordinates.
We say that $\pi\colon X\to Y$ is \emph{dominant} if $\overline{\pi(X)}=Y$.
The following is a corollary of Chevalley's upper semi-continuity theorem (for example, see \cite[Corollary 11.13]{Harris92} and the paragraph following it;  for the claim that the set is constructible, see also \cite[Theorem 3.16]{Harris92}).

\begin{theorem} \label{th:upperSemiContThm}
Let $X \subset \CC^d$ and $Y \subset \CC^{d'}$ be irreducible varieties each of degree at most $k$, and suppose $\pi : X \to Y$ is dominant.
Then exists a variety $Y'\subset \CC^d$ of degree $O_k(1)$ such that $\dim Y'< \dim Y$ and for every $y\in Y\setminus Y'$ we have that $\pi^{-1}(y)$ is a constructible set of dimension $\dim X -\dim Y$ and complexity $O_{k,d}(1)$.
\end{theorem}

We require a real variant of Theorem \ref{th:upperSemiContThm}.

\begin{corollary} \label{co:RealUpperSemi}
Let $U$ be a variety of dimension $d$ in $\RR^6$, let $\pi:\RR^6\to \RR^3$ be the projection on the first three coordinates, and let $U_3=\overline{\pi(U)}$ be of dimension $d_3$.
Then there exists a variety $W\subset \RR^3$ defined at degree $O_k(1)$ such that $\dim W< d_3$ and for every $u\in U_3\setminus W$ we have that $\pi^{-1}(u)$ is a constructible set of dimension at most $d -d_3$ and complexity $O_{k}(1)$.
\end{corollary}
\begin{proof}
Consider the complexification $U^*$ of $U$ and the complexification $U_3^*$ of $U_3$.
Note that $U^*$ is of dimension $d$ and that $U^*_3$ is of dimension $d_3$.
We extend the projection $\pi: \RR^6 \to \RR^3$ to $\pi: \CC^6 \to \CC^3$.
As before, this is the projection on the first three coordinates.

Set $U' = \overline{\pi(U^*)}$.
By Theorem \ref{th:Projection}(a), the variety $U'$ has degree $O_k(1)$.
Since $U^*_3$ is the smallest complex variety containing $U_3$, we have that $U^*_3 \subseteq U'$.
Consider the cylindrical variety $C=\pi^{-1}(U^*_3) \subset \CC^6$, and note that $U$ is contained in the real part of $C$.
Since $U^*$ is the smallest variety in $\CC^6$ that contains $U$, we have that $U^*\subseteq C$.
This in turn implies that $U'\subseteq U^*_3$, so $U' = U^*_3$.
In particular, $\dim U' = d_3$.

By Theorem \ref{th:upperSemiContThm}, there exists a variety $W^*\subset \CC^3$ of degree $O_k(1)$ such that $\dim W^*< d_3$ and for every $u\in U'\setminus W^*$ we have that $\pi^{-1}(u)$ is a constructible set of dimension $d -d_3$ and complexity $O_k(1)$.
We set $W\subset \RR^3$ to be the real part of $W^*$, and note that $\dim W <d_3$.
Since $(U_3\setminus W) \subset (U'\setminus W^*)$, for every $u\in U_3\setminus W$ we have that $\pi^{-1}(u)\subset \RR^6$ is a constructible set of dimension at most $d -d_3$ and complexity $O_k(1)$.
\end{proof}

\subsection{Flats in $\RR^5$.}

Theorem \ref{th:FlatStructureRd} implies the following for the case of distinct distances in $\RR^3$.
Given two points $a=(a_1,a_2,a_3)$ and $p=(p_1,p_2,p_3)$ in $\RR^3$,
the corresponding 3-flat $L_{ap}\subset \RR^6$ is defined by
\begin{align}
-(a_1+p_1) x_2  - (a_2+p_2) x_3  + 2x_6  &= a_3  - p_3, \nonumber \\
-(a_1+p_1) x_1+ (a_3+p_3) x_3  + 2x_5  &= a_2  - p_2, \nonumber \\
(a_2+p_2) x_1  + (a_3+p_3) x_2   + 2x_4  &= a_1-p_1. \label{eq:3FlatDef}
\end{align}
Note that $\{L_{ap} :\ a,p\in\RR^3 \}$ is a six-dimensional family of 3-flats in $\RR^6$.

Let $\pts$ be a set of $n$ points in $\RR^3$, and let $H$ be a hyperplane in $\RR^6$, chosen generically with respect to $\pts$.
For $a,p\in\RR^3$ we write $F_{ap} = L_{ap}\cap H$.
We consider the sets
\[ \flats = \{F_{ap} :\ a,p\in\RR^3 \} \qquad \text{ and } \qquad \flatss = \{F_{ap} :\ a,p\in\pts \}. \]
Since $H$ is chosen generically, $\flatss$ is a set of $n^2$ distinct 2-flats in $H$.
We think of $H$ as $\RR^5$, so $\flatss$ becomes a set of 2-flats in $\RR^5$.
As shown in Section \ref{sec:DDR3}, every pair of flats in $\flatss$ intersect in at most one point.

\begin{theorem} \label{th:PlanesIn3d}
Let $U$ be an irreducible three-dimensional variety in $\RR^5$ defined at degree $k$.
Then either $U$ contains $O_k\left(n^{2/3}\right)$ flats of $\flatss$ or there exists a curve in $\RR^3$ defined at degree $O_k(1)$ that contains $\Omega_k\left(n^{2/3}\right)$ points of $\pts$.
\end{theorem}
It is not difficult to show that $n^{2/3}$ points on a constant-degree curve in $\RR^3$ span $\Omega\left(n^{2/3}\right)$ distinct distances.
This is exactly the conjectured number of distances in $\RR^3$, so we may assume that no constant-degree curve in $\RR^3$ contains $n^{2/3}$ points of $\pts$.
Then, Theorem \ref{th:PlanesIn3d} implies that every constant-degree three-dimensional variety in $\RR^5$ contains $O(n^{2/3})$ flats of $\flatss$.
\begin{proof}[Proof of Theorem \ref{th:PlanesIn3d}.]
For any $a,p\in \RR^3$, by \eqref{eq:3FlatDef} we have the parametrization
\begin{align}
L_{ap} = \bigg\{(s,t,r,\big(a_1-p_1  - (a_2+&p_2) s  - (a_3+p_3) t\big)/2 , \big(a_2  - p_2 + (a_1+p_1) s - (a_3+p_3) r\big)/2, \nonumber \\
&\big(a_3  - p_3 + (a_1+p_1) t  + (a_2+p_2) r\big)/2 )\in \RR^6 \ :\ s,t,r\in \RR \bigg\}. \label{eq:LapParam}
\end{align}
To parameterize $L_{ap}\cap H$, we isolate $x_3$ in the linear equation defining $H$ and use this to eliminate the parameter $r$ (since $H$ is generic, its defining equation contains $x_3$).
This parametrization of $L_{ap}\cap H$ consists of five linear functions in the two variables $s,t\in \RR$, with coefficients that are polynomials of degree at most two in the coordinates of $a$ and $p$.

We identify $H$ with $\RR^5$.
Equivalently, let $\pi_H: H \to \RR^5$ be a the map that takes $H$ to $\RR^5$.
Since $\pi_H$ can be seen as a translation followed by a rotation and a projection, we can write $\pi_H$ as five linear polynomials in $x_1,\ldots,x_6$.
Combining this with the above parametrization, we obtain a parametrization of $F_{ap} = \pi_H(L_{ap}\cap H)$ using five linear functions in the two variables $s,t\in \RR$ and coefficients that are polynomials of degree at most two in the coordinates of $a$ and $p$.

Let $f\in \RR[x_1,\ldots, x_5]$ be a polynomial of degree $2k$ such that $\vb(f) = U$ (if $U$ is defined as $\vb(f_1,\ldots,f_m)$ where each $f_j$ is of degree at most $k$, then we take $f= f_1^2 +\cdots + f_m^2$).
We think of $f\mid_{\pi_H(L_{ap}\cap H)}$ as a polynomial of degree at most $2k$ in $\RR[s,t]$ and coefficients that depend on the coordinates of $a$ and $p$.
Note that $F_{ap} \subset U$ if and only if $f|_{\pi_H(L_{ap}\cap H)}$ is identically zero.
That is, if and only if the coefficient of every monomial of $f|_{\pi_H(L_{ap}\cap H)}$ is zero.
There are $O_k(1)$ such monomials, and the coefficient of each is a polynomial of degree at most $4k$ in the coordinates of $a$ and $p$.
This implies that the set
\[ \flatsu = \left\{(a,p)\in \RR^6 :\ F_{ap} \subset U\right\} \]
is a variety defined at degree $O_k(1)$.
We use the notation $\flatsu$ to refer both to the above set of points in $\RR^6$ and to the set of corresponding 2-flats in $\RR^5$.

Let $u$ be a regular point of $U$, and let $F_{ap},F_{a'p'}\subset U$ be 2-flats of $\flats$ incident to $u$.
Since any pair of 2-flats of $\flats$ intersect in at most one point, we have $F_{ap}\cap F_{a'p'} = \{u\}$, so $T_u F_{ap}$ and $T_u F_{a'p'}$ span a 4-flat in $\RR^5$.
This is impossible, since both $T_u F_{ap}$ and $T_u F_{a'p'}$ are contained in the 3-flat $T_u U$.
This contradiction implies that every regular point of $U$ is incident to at most one 2-flat of $\flatsu$.
By Theorem \ref{th:Singular}, the set of singular points $U_\text{sing}$ is a two-dimensional variety defined at degree $O_k(1)$.
Thus, the number of 2-flats of $\flats$ contained in $U_\text{sing}$ is $O_k(1)$, and in particular there are $O_k(1)$ flats of $\flatss$ in $U_\text{sing}$.
Every 2-flat of $\flatss$ that is not contained in $U_\text{sing}$ intersects $U_\text{sing}$ in a variety of dimension at most one.
That is, excluding $O_k(1)$ flats, every flat of $\flatsu$ intersects $U_\text{reg}$ in a constructible set of dimension two.
If $\flatsu$ is of dimension at least two then $U$ contains a two-dimensional union of disjoint two-dimensional constructible sets, which in turn implies that $U$ is of dimension at least four.
This contradicts the assumption that $U$ is three-dimensional, so $\flatsu$ is of dimension at most one.

Let $\pi_1: \RR^6 \to \RR^3$ be the projection on the first three coordinates and let $\pi_2: \RR^6 \to \RR^3$ be the projection on the last three coordinates.
That is, for points $a,p\in \RR^3$ we have $\pi_1(a,p) = a$ and $\pi_2(a,p) = p$.
By Theorem \ref{th:Projection}(b), the variety $\gamma_1 = \overline{\pi_1(\flatsu)} \subset \RR^3$ is defined at degree $O_k(1)$ and of dimension at most one.
We symmetrically define $\gamma_2 = \overline{\pi_2(\flatsu)}$.

Set $\pts_1 = \pts\cap \gamma_1$ and $\pts_2 = \pts\cap \gamma_2$.
If $|\pts_1| = \Omega\left(n^{2/3}\right)$ then we are done, since we found a constant-degree curve in $\RR^2$ containing many points of $\pts$.
We may thus assume that $|\pts_1| = O\left(n^{2/3}\right)$, and symmetrically that $|\pts_2| = O\left(n^{2/3}\right)$.
If $\gamma_1$ is of dimension zero, then by Lemma \ref{le:BoundedNumComponents} it is a set of $O_k(1)$ points.
Since $|\pts_2| = O\left(n^{2/3}\right)$, we get that $O_k\left(n^{2/3}\right)$ flats of $\flatss$ are contained in $U$.
This completes the proof, so we may assume that $\gamma_1$ is of dimension one.

By Corollary \ref{co:RealUpperSemi}, excluding $O_k(1)$ exceptional points, for every $a\in \gamma_1$ there are $O_k(1)$ points $w\in \flatsu$ such that $\pi_1(w) = a$.
Since $|\pts_2| = O\left(n^{2/3}\right)$, the exceptional points correspond to $O_k\left(n^{2/3}\right)$ flats of $\flatss$ in $U$.
Since $|\pts_1| = O\left(n^{2/3}\right)$, the non-exceptional points also correspond to $O_k\left(n^{2/3}\right)$ flats of $\flatss$ in $U$.
We conclude that $|\pts^2\cap \flatsu| = O_k\left(n^{2/3}\right)$, which completes the proof.
\end{proof}

We now study the number of 2-flats of $\flatss$ in a four-dimensional constant-degree variety in $\RR^5$.

\begin{theorem}
Let $\pts$ be a set of $n$ points in $\RR^3$ and let $U$ be an irreducible four-dimensional variety in $\RR^5$ defined at degree $k$.
Then either $U$ contains $O_k\left(n^{4/3}\right)$ flats of $\flatss$ or there exists a surface in $\RR^3$ that contains $\Omega_k\left(n^{2/3}\right)$ points of $\pts$.
\end{theorem}
\begin{proof}
The case where $U$ is a hyperplane was already handled in Section \ref{sec:SpunD}, so we may assume that $U$ is not a hyperplane.
We begin by imitating the proof of Theorem \ref{th:PlanesIn3d}.
As in that proof, we define
\[ \flatsu = \left\{(a,p)\in \RR^6 :\ F_{ap} \subset U\right\}, \]
and note that $\flatsu$ is a variety definted at degree $O_k(1)$.

By Theorem \ref{th:Singular}, the set of singular points $U_\text{sing}$ is a three-dimensional variety defined at degree $O_k(1)$.
By Lemma \ref{le:BoundedNumComponents}, the set $U_\text{sing}$ consists of $O_k(1)$ irreducible components.
We apply Theorem \ref{th:PlanesIn3d} to each of these components, obtaining that either there exists a one-dimensional variety containing $\Omega\left(n^{2/3}\right)$ points of $\pts$, or that the total number of flats from $\flatsu$ contained in $U_\text{sing}$ is $O\left(n^{2/3}\right)$.
We may assume that we are in the latter case, since otherwise we are done.

Let $w$ be a regular point of $U$, and let $\flats_w$ be a set of 2-flats of $\flats$ that are contained in $U$ and incident to $w$.
Note that for every $F_{ap} \in\flats_w$ we have $F_{ap} \in U \cap (w+ T_w U)$.
Since $U$ is not a hyperplane, the intersection $C = U \cap (w+ T_w U)$ is a variety defined at degree at most $k$ and dimension at most three.
Since every pair of 2-flats of $\flats_w$ intersect in at most one point, every point of $C\setminus\{w\}$ is incident to at most one such flat.
It is not difficult to verify that the points in $\flatsu$ that correspond to flats of $\flats_w$ form a variety.
If this variety is of dimension at least two, then $C$ contains a two-dimensional family of 2-flats that intersect only at $w$, so $\dim C =4$.
This contradicts the above claim that $\dim C\le 3$, so the points of $\flatsu$ that correspond to 2-flats in $\flats_w$ form a subvariety of $\flatsu$ of dimension at most one.

Every 2-flat of $\flatsu$ that is not contained in $U_\text{sing}$ intersects $U_\text{reg}$ in a constructible set of complexity $O_k(1)$ and dimension two.
Since $U$ is four-dimensional and every point of $U_\text{reg}$ is incident to a family of 2-flats of dimension at most one, we conclude that $\flatsu$ is of dimension at most three.

Let $\pi_1: \RR^6 \to \RR^3$ be the projection on the first three coordinates and let $\pi_2: \RR^6 \to \RR^3$ be the projection on the last three coordinates.
As in the proof of Theorem \ref{th:PlanesIn3d}, we set $\gamma_1 = \overline{\pi_1(\flatsu)}$ and $\gamma_2 = \overline{\pi_2(\flatsu)}$.
These are two varieties of dimension at most three defined at degree $O_k(1)$.
We partition the rest of the analysis according to the dimension of $\gamma_1$.

If $\dim \gamma_1 = 0$ then by Lemma \ref{le:BoundedNumComponents} it consists of $O_k(1)$ points.
Each such point can participate in at most $n$ points of $\pts^2 \cap \flatss$, and this sums up to a total of $O_k(n)$ flats of $\flatss$ in $U$.

If $\dim \gamma_1 = 1$ then we may assume that $\gamma_1$ contains $O\left(n^{2/3}\right)$ points of $\pts$, since otherwise we are done.
By Corollary \ref{co:RealUpperSemi}, excluding $O_k(1)$ exceptional points, for every $a\in \gamma_1$ the set of points $w\in \flatsu$ satisfying $\pi_1(w) = a$ is contained in a variety of dimension two defined at degree $O_k(1)$.
Since the set of exceptional points is zero-dimensional, it can be handled as in the case of $\dim \gamma_1 = 0$.
Consider a non-exceptional point $a\in \gamma_1$ and set $\gamma_a = \overline{\pi_2(\pi_1^{-1}(a))}$.
Note that $\gamma_a\subset \RR^3$ is a variety of dimension at most two defined at degree $O_k(1)$.
If $|\gamma_a\cap \pts| = \Omega\left(n^{2/3}\right)$ then we are done.
We may thus assume that every non-exceptional point $a\in \gamma_1 \cap \pts$ satisfies $|\gamma_a\cap \pts| = O\left(n^{2/3}\right)$.
This gives a total of $O_k\left(n^{4/3}\right)$ flats of $\flatss$ in $U$.

If $\dim \gamma_1 = 2$ then we may assume that $\gamma_1$ contains $O\left(n^{2/3}\right)$ points of $\pts$, since otherwise we are done.
By Corollary \ref{co:RealUpperSemi}, there exists a variety $W\subset \RR^3$ of dimension at most one defined at degree $O_k(1)$ such that for every $a\in \gamma_1\setminus W$ we have that $\pi^{-1}(a)$ is a constructible set of dimension at most one and complexity $O_{k}(1)$.
Since $\dim W \le 1$, points on $W$ can be handled as in the case of $\dim \gamma_1 = 1$.
For a point $a\in \gamma_1\setminus W$ we set $\gamma_a = \overline{\pi_2(\pi_1^{-1}(a))}$.
Note that $\gamma_a\subset \RR^3$ is a variety of dimension at most one defined at degree $O_k(1)$.
If $|\gamma_a\cap \pts| = \Omega\left(n^{2/3}\right)$ then we are done.
We may thus assume that every non-exceptional point $a\in \gamma_1 \cap \pts$ satisfies $|\gamma_a\cap \pts| = O\left(n^{2/3}\right)$.
This gives a total of $O_k\left(n^{4/3}\right)$ flats of $\flatss$ in $U$.

Finally, consider the case where $\dim \gamma_1 = 3$.
By Corollary \ref{co:RealUpperSemi}, there exists a variety $W\subset \RR^3$ of dimension at most two defined at degree $O_k(1)$ such that for every $a\in \gamma_1\setminus W$ we have that $\pi^{-1}(a)$ is a set of $O_{k}(1)$ points.
Since $\dim W \le 2$, it can be handled as in the cases of $\dim \gamma_1 \le 2$.
For every non-exceptional $a\in \gamma_1$, we have that $\flatsu$ contains $O_k(1)$ points of $\{a\}\times\pts$.
By summing this over every $a\in \pts\setminus W$ we get a total of $O_k\left(n^{4/3}\right)$ flats of $\flatss$ in $U$.
\end{proof}

We conclude this section by studying the number of 2-flats of $\flatss$ that have a one-dimensional intersection with a given two-dimensional variety.

\begin{theorem}
Let $\pts$ be a set of $n$ points in $\RR^3$ and let $U$ be an irreducible two-dimensional variety in $\RR^5$ defined at degree $k$.
Then either $U$ has a one-dimensional intersection with $O_k\left(n^{4/3}\right)$ flats of $\flatss$ or there exists a two dimensional variety defined at degree $O_k(1)$ in $\RR^3$ that contains $\Omega_k\left(n^{2/3}\right)$ points of $\pts$.
\end{theorem}
\begin{proof}
Set
\[ \flatsu = \left\{(a,p)\in \RR^6 :\ \dim (F_{ap} \cap U) = 1\right\}. \]

By Lemma \ref{le:BoundedNumComponents}, if a 2-flat in $\RR^5$ has a zero-dimensional intersection with $U$, then this intersection consists of $O_k(1)$ points.
Denote this maximum number of intersection points as $\alpha_k$.
A 2-flat in $\RR^5$ intersects $U$ in a variety of dimension at least one if and only if this intersection consists of at least $\alpha_k+1$ points.
That is, $(a,p)\in \flatsu$ if and only if $F_{ap}\neq U$ and there exist $\alpha_k+1$ distinct points of $\RR^5$ that are contained in $U\cap F_{ap}$.
This is a semi-algebraic condition, so $\flatsu$ is semi-algebraic.
By Lemma \ref{le:QuantElim}, the complexity of $\flatsu$ is $O_k(1)$.

By Theorem \ref{th:Singular}, the set of singular points $U_\text{sing}$ is a one-dimensional variety defined at degree $O_k(1)$.
Thus, for a 2-flat to have a one-dimensional intersection with $U_\text{sing}$, the 2-flat must contain a one-dimensional component of $U_\text{sing}$.
Since any pair of 2-flats of $\flats$ intersect in at most one point, the number of 2-flats of $\flats$ that have a one-dimensional intersection with $U_\text{sing}$ is $O_k(1)$.

Let $w$ be a regular point of $U$, and let $\flats_w$ be a set of 2-flats of $\flatsu$ such that $w$ is contained in a one-dimensional component of their intersection with $U$.
Note that for every $F_{ap} \in\flats_w$ we have that $F_{ap} \cap (w+ T_w U)$ is a line (or equal to $F_{ap}$).
Since every pair of 2-flats of $\flats_w$ intersect in at most one point, every point of $(w+ T_w U)\setminus\{w\}$ is incident to at most one such line.
Thus, $\flats_w$ is of dimension at most one.
Since $U$ is two-dimensional and every regular point of $U$ is incident to a one-dimensional subset of flats of $\flatsu$, we conclude that $\flatsu$ is of dimension at most two.

Let $\pi_1: \RR^6 \to \RR^3$ be the projection on the first three coordinates and let $\pi_2: \RR^6 \to \RR^3$ be the projection on the last three coordinates.
As in the preceding proofs, let $\gamma_1 = \overline{\pi_1(\flatsu)}$ and $\gamma_2 = \overline{\pi_2(\flatsu)}$.
By Theorem \ref{th:Projection}, both $\gamma_1$ and $\gamma_2$ are varieties defined at degree $O_k(1)$ and dimension at most two.
If $|\gamma_1\cap \pts|=\Omega(n^{2/3})$ or $|\gamma_2\cap \pts|=\Omega(n^{2/3})$, then we are done.
We may thus assume that $|\gamma_1\cap \pts|=O(n^{2/3})$ and $|\gamma_2\cap \pts|=O(n^{2/3})$.
We partition the rest of the analysis according to the dimension of $\gamma_1$.

If $\dim \gamma_1 = 0$ then by Lemma \ref{le:BoundedNumComponents} it consists of $O_k(1)$ points.
Since $|\gamma_2\cap \pts|=O\left(n^{2/3}\right)$, every point of $\gamma_1$ corresponds to $O\left(n^{2/3}\right)$ elements of $\flatsu$.
By summing this over every point of $\gamma_1$, we get $O\left(n^{2/3}\right)$ flats of $\flatss$ that have a one-dimensional intersection with $U$.

If $\dim \gamma_1 = 1$, then we apply Corollary \ref{co:RealUpperSemi} to it.
We obtain that, excluding $O_k(1)$ exceptional points, for every $a\in \gamma_1$ we have that $\pi^{-1}(a)$ is contained in a variety of dimension at most one defined at degree $O_{k}(1)$.
We denote such a variety as $\gamma_a$, and set $\gamma'_a = \overline{\pi_2(\gamma_a)}$.
By Theorem \ref{th:Projection}, $\gamma'_a \subset \RR^3$ is a variety of dimension at most one and defined at degree $O_k(1)$.
If $|\gamma'_a \cap \pts|=\Omega\left(n^{2/3}\right)$ then we are done.
It remains to handle the case where for every non-exceptional $a$ we have $|\gamma'_a \cap \pts|=O\left(n^{2/3}\right)$.
Recalling also that $|\gamma_1 \cap \pts|=O\left(n^{2/3}\right)$, we get that $O\left(n^{4/3}\right)$ flats of $\flatss$ have a one-dimensional intersection with $U$.

If $\dim \gamma_1 = 2$, we again apply Corollary \ref{co:RealUpperSemi} to it.
This implies that there exists a variety $W$ of dimension at most one defined at degree $O_k(1)$, such that for every $a\in \gamma_1\setminus W$ we have that $\pi^{-1}(a)$ consits of $O_{k}(1)$ points.
Since $\dim W \le 1$, it can be handled as the cases of $\dim \gamma_1 \le 1$.
Recalling that $|\gamma_1\cap \pts| = O\left(n^{2/3}\right)$, we conclude that $O\left(n^{2/3}\right)$ flats of $\flatss$ have a one-dimensional intersection with $U$.
\end{proof}


\begin{thebibliography}{99}
%
\bibitem{BPR07}
S.\ Basu, R.\ Pollack, and M.-F.\ Coste--Roy,
\emph{Algorithms in real algebraic geometry}, Springer Science \& Business Media, 2007.
%
\bibitem{BCR98}
J.\ Bochnak, M.\ Coste, and M.\ Roy,
\emph{Real Algebraic Geometry}, Springer-Verlag, Berlin, 1998.
%
\bibitem{ES11}
G.\ Elekes and M.\ Sharir,
Incidences in three dimensions and distinct distances in the plane,
\emph{Combin.\ Probab.\ Comput.} {\bf 20} (2011), 571--608.
%
\bibitem{erd46}
P.\ Erd\H os,
On sets of distances of $n$ points,
\emph{Amer.\ Math.\ Monthly} {\bf 53} (1946), 248--250.
%
\bibitem{FPSSZ17}
J.\ Fox, J.\ Pach, A.\ Sheffer, A.\ Suk, and J.\ Zahl,
A semi-algebraic version of Zarankiewicz's problem,
\emph{J.\ Eur.\ Math.\ Soc.} {\bf 19} (2017), 1785--1810.
%
\bibitem{Gallier08}
J.\ Gallier,
Clifford Algebras, Clifford Groups, and a Generalization of the Quaternions,
arXiv:0805.0311.
%
\bibitem{GK15}	
L.\ Guth and N.H.\ Katz.
\newblock On the Erd{\H o}s distinct distances problem in the plane.
\newblock {\em Annals Math.} {\bf 181} (2015), 155--190.
%
\bibitem{Harris92}
J.\ Harris,
\emph{Algebraic geometry: a first course},
Springer, New York, 1992.
%
\bibitem{RV02}
M.-F.\ Roy and N.\ Vorobjov,
The complexification and degree of a semi-algebraic set,
\emph{Math.\ Z.} {\bf 239} (2002), 131--142.
%
\bibitem{RS16}
M.\ Rudnev and J.\ M.\ Selig,
On the Use of the Klein Quadric for Geometric Incidence Problems in Two Dimensions,
\emph{SIAM J.\ Discrete Math.} {\bf 30} (2016), 934--954.
%
\bibitem{SharirSolo16}
M.\ Sharir and N.\ Solomon,
Distinct and repeated distances on a surface and incidences between points and spheres,
arXiv:1604.01502.
%
\bibitem{ShefferBook}
A.\ Sheffer,
\emph{Incidence Theory with a Focus on the Polynomial Method},
\url{https://adamsheffer.wordpress.com/pdf-files/}.
%
\bibitem{ST12}
J.\ Solymosi and T.\ Tao,
An incidence theorem in higher dimensions,
\emph{Discrete Comput.\ Geom.} {\bf 48} (2012), 255--280.
%
\bibitem{SV08}
J.\ Solymosi and V.\ H.\ Vu,
Near optimal bounds for the Erd\H os distinct distances problem in high dimensions,
\emph{Combinatorica} {\bf 28} (2008), 113--125.
%
\bibitem{Tblog11}
T.\ Tao,
Lines in the Euclidean group SE(2), blog post,
\url{https://terrytao.wordpress.com/2011/03/05/lines-in-the-euclidean-group-se2/}
%
\bibitem{Whit57}
H.\ Whitney,
Elementary structure of real algebraic varieties,
\emph{Annals Math.} {\bf 66} (1957), 545--556.
%
\end{thebibliography}
\end{document}